\definecolor{Myblue}{rgb}{0,0,0.6}  
\theoremstyle{definition}
\newtheorem{defn}{Definition}
\newtheorem{thm}[defn]{Theorem}
\newtheorem{prp}[defn]{Proposition}
\newtheorem{lem}[defn]{Lemma}
\newtheorem{rem}[defn]{Remark}
\numberwithin{equation}{section}
\numberwithin{defn}{section}
\numberwithin{figure}{section}
\newcommand{\pic}[2][0.75]{
	\begin{tikzpicture}[scale=0.5,baseline={([yshift=-.5ex]current bounding box.center)}]
	\node at (0,0) {\includegraphics[scale=#1]{figures/#2}};
	\end{tikzpicture}
}
\begin{document}
\def\it{\textit}
\def\mcA{\mathcal{A}}
\def\mcB{\mathcal{B}}
\def\mcC{\mathcal{C}}
\def\mcD{\mathcal{D}}
\def\mcI{\mathcal{I}}
\def\mcS{\mathcal{S}}
\def\mcZ{\mathcal{Z}}
\def\mcACA{{_A \mcC_A}}
\def\mcD{\mathcal{D}}
\def\euT{\mathscr{T}}
\def\opk{\Bbbk}
\def\opA{\mathbb{A}}
\def\opC{\mathbb{C}}
\def\opR{\mathbb{R}}
\def\opZ{\mathbb{Z}}
\def\opid{\mathbbm{1}}
\def\a{\alpha}
\def\b{\beta}
\def\g{\gamma}
\def\d{\delta}
\def\D{\Delta}
\def\vareps{\varepsilon}
\def\l{\lambda}
\def\abar{\overline{\a}}
\def\bbar{\overline{\b}}
\def\gbar{\overline{\g}}
\def\ddbar{\overline{\d}}
\def\mubar{\overline{\mu}}
\def\pibar{{\bar{\pi}}}

\def\opp{{\operatorname{op}}}
\def\id{\operatorname{id}}
\def\im{\operatorname{im}}
\def\Hom{\operatorname{Hom}}
\def\End{\operatorname{End}}
\def\tr{\operatorname{tr}}
\def\ev{\operatorname{ev}}
\def\coev{\operatorname{coev}}
\def\evt{\widetilde{\operatorname{ev}}}
\def\coevt{\widetilde{\operatorname{coev}}}
\def\Id{\operatorname{Id}}
\def\Vect{\operatorname{Vect}}
\def\loc{{\operatorname{loc}}}
\def\orb{{\operatorname{orb}}}
\def\coker{{\operatorname{coker}}}
\def\Ind{{\operatorname{Ind}}}
\def\Irr{{\operatorname{Irr}}}
\def\Dim{\operatorname{Dim}}

\def\Lra{\Leftrightarrow}
\def\Ra{\Rightarrow}
\def\ra{\rightarrow}
\def\lra{\leftrightarrow}
\def\xra{\xrightarrow}

\def\varone{\iota}

\newcommand{\eqrefO}[1]{\hyperref[eq:O#1]{\text{(O#1)}}}
\newcommand{\eqrefT}[1]{\hyperref[eq:T#1]{\text{(T#1)}}}

\title{Fibonacci-type orbifold data\\ in Ising modular categories}

\author{
Vincentas Mulevi\v{c}ius \quad
Ingo Runkel\\[0.5cm]
\normalsize{\texttt{\href{mailto:vincentas.mulevicius@uni-hamburg.de}{vincentas.mulevicius@uni-hamburg.de}}} \quad
\normalsize{\texttt{\href{mailto:ingo.runkel@uni-hamburg.de}{ingo.runkel@uni-hamburg.de}}}\\[0.1cm]
{\normalsize\slshape Fachbereich Mathematik, Universit\"{a}t Hamburg, Germany}\\[-0.1cm]
}

\date{}
\maketitle

\begin{abstract}
    An orbifold datum is a collection $\opA$ of algebraic data in a modular fusion category $\mcC$. It allows one to define a new modular fusion category $\mcC_\opA$ in a construction that is a generalisation of taking the Drinfeld centre of a fusion category. Under certain simplifying assumptions we characterise orbifold data $\opA$ in terms of scalars satisfying polynomial equations and give an explicit expression which computes the number of
    isomorphism classes of simple objects in $\mcC_\opA$.
    
In Ising-type modular categories we find new examples of orbifold data which -- in an appropriate sense -- exhibit Fibonacci fusion rules. The corresponding orbifold modular categories have 11 simple objects, and for a certain choice of parameters one obtains the modular category for $sl(2)$ at level 10. This construction inverts the extension of the latter category by the $E_6$ commutative algebra.
\end{abstract}
\newpage

\setcounter{tocdepth}{2}

\tableofcontents

\section{Introduction and summary}

The notion of an orbifold datum was introduced in \cite{CRS1} to describe a generalised notion of ``orbifolding'' a topological quantum field theory by carrying out an internal state sum construction, where the underlying manifold is stratified by a defect foam. For the three-dimensional Reshetikhin-Turaev TQFT defined by a modular fusion category $\mcC$, an orbifold datum $\opA$ is described algebraically as a tuple \cite{CRS3}
$$
\opA ~=~ \big(\,A , T, \alpha, \bar\alpha , \psi , \phi \,\big) \ ,
$$
where $A$ is a ($\Delta$-separable, symmetric) Frobenius algebra in $\mcC$, $T$ is an $A$-$(A\otimes A)$-bimodule in $\mcC$, $\alpha, \bar\alpha$ are endomorphisms of $T \otimes T$, $\psi$ is an invertible $A$-$A$-bimodule endomorphism of $A$, and finally $\phi$ is a constant. The conditions $\opA$ has to satisfy -- as well as their interpretation in a TQFT with point, line and surface defects -- can be found in \cite{CRS3}.

Given an orbifold datum $\opA$ in a modular fusion category $\mcC$, the construction in \cite{CRS3,Carqueville:2020} defines a new TQFT -- the generalised orbifold -- by evaluating the TQFT for $\mcC$ on stratified manifolds where 2-strata are labelled by $A$, 1-strata by $T$ and 0-strata by $\alpha$ or $\bar\alpha$.
On the other hand, the pair $(\mcC,\opA)$ gives rise to a new modular fusion category $\mcC_{\opA}$ \cite{Mulevicius:2020bat} (under the extra assumption that $\opA$ is simple,
    see below), 
and one finds that
the Reshetikhin-Turaev TQFT for $\mcC_{\opA}$ is equivalent to this generalised orbifold  \cite{Carqueville:2020}.

All previously known examples of orbifold data were of three types: a) those obtained from a commutative algebra $A$, b) those obtained from a $G$-crossed ribbon category for a finite group $G$, and c) those obtained from a spherical fusion category \cite{CRS3}.
In this paper we provide an explicit example which is not of these three types, and we believe that more examples can be found with the methods we use. 
As outlined below, two motivations to look for such more general examples are provided by the classification of modular fusion categories and by topological phases of matter.

\medskip

We now describe the construction and the example in more detail. 

\medskip

Let $\mcC$ be a modular fusion category. Our first (of three) simplifying assumption is that the fusion rules $N_{ij}^{~k}$ of $\mcC$ are all either $0$ or $1$.

The second (and most drastic) simplifying assumption is that the algebra $A$ is a direct sum of copies of the tensor unit $\opid$ of $\mcC$, itself thought of as a commutative Frobenius algebra. 
That is, $A = \bigoplus_{a \in B} \opid_a$, where $a \in B$ indexes the different copies of $\opid$. This is definitely not the most general ansatz, but it will be good enough to find new examples.

Accordingly, $T = \bigoplus_{a,b,c \in B} {}_at_{bc}$, where ${}_at_{bc} \in \mcC$. The third simplifying assumption we make is that ${}_at_{bc}$ is either zero or a simple object. 

Under these three assumptions, in Section~\ref{sec:poly} we state the conditions an orbifold datum has to satisfy as polynomial equations for the expansion coefficients in an appropriate basis. This is very similar to expressing e.g.\ the pentagon equation for the associator of a fusion category in terms of $F$-matrices as ``$FF = \sum FFF$''. And indeed, this equation appears in the special case that $\mcC$ is the category of finite-dimensional complex vector spaces.

Given a list of scalars solving the polynomial equations characterising an orbifold datum $\opA$ as above, one can find expressions in terms of these scalars that compute whether $\opA$ is simple (Proposition~\ref{prop:A-simple}), as well as the number of simple objects in $\mcC_\opA$ (Proposition~\ref{prop:count-simple}).

\medskip

Our example is an orbifold datum in Ising-type modular fusion categories $\mcI_{\zeta,\epsilon}$. These are precisely the modular fusion categories with three simple objects $\opid$, $\sigma$, $\vareps$ and fusion rules 
$$
\vareps \otimes \vareps \,\cong\, \opid
\qquad , \qquad  \sigma \otimes \sigma \,\cong\, \opid \,\oplus\, \vareps \ . 
$$
There are 16 such categories, indexed by $\zeta \in \opC$ with $\zeta^8 = -1$ (equivalently, $\zeta$ is a primitive 16${}^\text{th}$ root of unity), and by a sign $\epsilon \in \{ \pm 1\}$, see \cite[App.\,B]{DGNO}. We describe $\mcI_{\zeta,\epsilon}$ in detail in Section~\ref{sec:Ising}.

We make the ansatz that $A$ is a direct sum of two copies of $\opid$, which we index by $\varone$ and $\varphi$:
\begin{equation}
A = \opid_{\varone} \oplus \opid_\varphi~.
\end{equation}
For ${}_at_{bc}$ we make the ansatz
\begin{equation}\label{eq:t-ansatz}
	{}_at_{bc} = \begin{cases} \opid &;~ \text{either 0 or 2 of $a,b,c$ are $\varphi$}
	\\
	\sigma &;~ \text{all of $a,b,c$ are $\varphi$}
	\\
0 &;~ \text{else}
	\end{cases}
\end{equation}
This ansatz may seem completely ad hoc, but we explain in Remark~\ref{rem:E6} below how to arrive at these conditions.
Note that if we define numbers $M_{bc}^{~a}$ to be 1 if ${}_at_{bc}\neq 0$ and 0 else, we obtain the Fibonacci fusion rules. Hence the title of this paper. In Section~\ref{sec:orbdata} we solve the resulting polynomial equations (up to a simple gauge freedom), and the solutions are (Theorem~\ref{thm:main-detail}):

\begin{thm}\label{thm:main}
For each primitive $48^\text{th}$ root of unity $h \in \opC$ and for each $\epsilon \in \{\pm 1\}$ we obtain an orbifold datum $\opA_{h,\epsilon}$ in $\mcI_{\zeta,\epsilon}$, where $\zeta = h^3$ and where $A$ and $T$ are as above.
\end{thm}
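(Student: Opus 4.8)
The strategy is to turn the defining conditions of an orbifold datum into the system of polynomial equations worked out in Section~\ref{sec:poly}, and to produce, for the prescribed $A$ and $T$, a solution of that system parametrised exactly by $(h,\epsilon)$; Theorem~\ref{thm:main} then follows from the explicit solution recorded in Theorem~\ref{thm:main-detail}. First I would pin down the remaining data. With $A=\opid_\varone\oplus\opid_\varphi$ a direct sum of units, the $\Delta$-separable symmetric Frobenius structure on each summand is the canonical one on $\opid$ (so separability and symmetry hold automatically), the bimodule endomorphism $\psi$ is just a pair of invertible scalars $(\psi_\varone,\psi_\varphi)$, and $\phi$ is a single scalar. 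The $A$-$(A\otimes A)$-bimodule structure on $T=\bigoplus{}_at_{bc}$ is likewise encoded by scalars, which by the ansatz \eqref{eq:t-ansatz} are essentially forced. Finally, after choosing a basis vector in each of the one-dimensional hom-spaces that appear when one decomposes the appropriate composite $T\otimes_A T$ of two copies of $T$ into simples of $\mcI_{\zeta,\epsilon}$ (using that $\otimes_A$ with $A$ a sum of units merely matches indices and tensors the underlying objects in $\mcC$), the morphisms $\alpha,\bar\alpha$ become finitely many scalar components indexed by admissible tuples from $B=\{\varone,\varphi\}$ together with the admissible Ising fusion channels. Thus the entire orbifold datum $\opA_{h,\epsilon}$ is encoded by a finite list of complex numbers.

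Next I would write out the polynomial equations of Section~\ref{sec:poly} for this list. The only way the category parameters enter is through the associator $F$-matrices of $\mcI_{\zeta,\epsilon}$ and the dimension $\dim\sigma=\sqrt2$, as recorded in Section~\ref{sec:Ising}; the structure constants of $A$ are trivial. Since the numbers $M_{bc}^{~a}$ are the Fibonacci fusion rules, the combinatorial shape of these equations is of pentagon type, coupling Fibonacci-type $6j$-data built out of the components of $\alpha$ and $\bar\alpha$ to the Ising $6j$-symbols; I would organise the case distinction by the number of $\varphi$'s among the multi-indices, the all-$\varphi$ sector being the one where $\sigma$ and hence the genuinely Ising-type $F$-matrices show up.

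It then remains to solve the system. First one uses the residual gauge freedom --- rescalings of the chosen basis vectors in the ${}_at_{bc}$ and of the summands of $A$ --- to normalise as many components of $\alpha,\bar\alpha$ as possible and to fix the overall scale, cutting the system down to a few genuine unknowns. The surviving equations should then determine these: I expect $\psi$ and $\phi$ to come out as explicit (roots of) expressions in $\zeta$, and the nontrivial $\alpha,\bar\alpha$-components to be governed by a single root-of-unity phase, with one equation playing the role of a compatibility constraint tying everything back to $\zeta$. The punchline is that this constraint can be met precisely when $\zeta=h^3$ for a primitive $48^{\text{th}}$ root of unity $h$ --- the $48=16\cdot3$ reflecting the order-$16$ data already present in $\zeta$ together with an order-$3$ phase imposed by the Fibonacci sector --- and that both values of $\epsilon$ are allowed, which is exactly Theorem~\ref{thm:main-detail}.

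The hard part is this last step. The system, though finite, is large and tightly coupled, and the bookkeeping over the $(a,b,c,\dots)$-channels together with the explicit manipulation of the Ising $F$-matrices is delicate; in particular one must verify that a consistent solution exists for \emph{exactly} the stated family of parameters --- that the higher consistency conditions (the pentagon among pentagons) introduce neither a further obstruction nor a further freedom --- and one must track carefully the square roots and signs entering through $\sigma\otimes\sigma\cong\opid\oplus\vareps$ and through $\epsilon$, so that the final parametrisation is as clean as stated.
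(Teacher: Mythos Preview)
Your outline is essentially the paper's own argument: reduce (O1)--(O8) to the scalar equations of Section~\ref{sec:poly}, use the gauge freedom of Lemma~\ref{lem:gauge_transfs} to normalise the $f$-coefficients with a $\varone$ among $b,c,d$, extract a handful of nontrivial unknowns in the all-$\varphi$ sector, and then solve; the paper does exactly this, and even records which instances of \eqref{O1} yield the key relations (in particular $\zeta=h^3$) before invoking computer algebra to check that the remaining equations force $h$ to be a primitive $48^{\text{th}}$ root of unity and pin down the auxiliary signs.

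Two small corrections to your write-up. First, the category data enter not only through the $F$-matrices and dimensions but crucially through the \emph{braiding}: the conditions \eqref{O1}, \eqref{O4}, \eqref{O5} in Table~\ref{tab:orb_eqs} contain $R$-symbols, coming from the fact that the two right $A$-actions on $T$ are interchanged via the braiding of $\mcC$; without these the all-$\varphi$ equations would not couple to $\zeta$ in the right way. Second, $\dim\sigma=\epsilon\lambda$ with $\lambda=\zeta^2+\zeta^{-2}$, so the sign $\epsilon$ enters here (and in $\psi_\varphi^2$), not just through a choice of square root. Finally, be aware that your heuristic $48=16\cdot 3$ is only a mnemonic: in the actual computation $h^{24}=-1$ leaves both order $16$ and order $48$ open, and it is the remaining identities (checked by machine in the paper) that exclude the non-primitive case.
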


There are 16 possible choices for $h$ and thus 32 orbifold modular categories $(\mcI_{h^3,\epsilon})_{\opA_{h,\epsilon}}$ which turn out to be pairwise non-equivalent (Proposition~\ref{prop:all-different}).
Each category $(\mcI_{h^3,\epsilon})_{\opA_{h,\epsilon}}$ has 11 isomorphism classes of simple objects, and its global dimension is given by (Proposition~\ref{prop:C_A-properties})
\begin{equation}
\Dim\!	\big( (\mcI_{h^3,\epsilon})_{\opA_{h,\epsilon}} \big)
~=~ 24\, \big(h^2+h^{-2}\big)^{-2} \ ,
\end{equation}
which takes values $6.43..$ and $89.5..$ depending on $h$. 
Objects of $\mcC_\opA$ consist of $A$-$A$-bimodules together with certain extra data.
In the case of $(\mcI_{h^3,\epsilon})_{\opA_{h,\epsilon}}$, we compute  the underlying bimodules (and hence objects in $\mcI_{\zeta,\epsilon}$) for the 11 simple objects, see Section~\ref{sec:adjoint}. 
This allows one for example to determine their quantum dimensions.
The next remark suggests that the $(\mcI_{h^3,\epsilon})_{\opA_{h,\epsilon}}$ should in an appropriate sense all be Galois conjugates of $\mathcal{C}(sl(2),10)$. 

\begin{rem}\label{rem:E6}~
\begin{enumerate}
	\item
Denote by $\mathcal{C}(sl(2),10)$ the modular fusion category of integrable highest weight
representations of the affine Lie algebra $\widehat{sl}(2)_{10}$. 	
We arrived at the above ansatz for $A$ and $t$ by investigating the extension of $\mathcal{C}(sl(2),10)$ by the commutative algebra $A=\underline{0} \oplus \underline{6}$, where the underlined number denotes the Dynkin label.\footnote{
	{} We would like to thank Terry Gannon for pointing
	out the relevance of this example to our attempts to find orbifold data not in one of the three classes mentioned above.}
From conformal embeddings (see e.g.\ \cite[Ch.\,17.5]{DiFr} and \cite{Ostrik:2001}), this extension is known to be $\mathcal{C}(sp(4),1)$ which is an Ising-type category (see \cite[Sec.\,6.4]{Davydov:2010}). The algebra $A$ also describes the $E_6$ modular invariant of the $SU(2)$-WZW model at level $10$. 

As mentioned above, extensions are an instance of a generalised orbifold. Conjecturally, the process of taking a generalised orbifold can be inverted by another generalised orbifold,
so it was expected that there is an orbifold datum in this Ising-type category which inverts the extension. A general discussion of inverse orbifolds for extensions will be given in \cite{Mu-prep} and leads to the above ansatz.

We note that for two-dimensional topological field theories, the procedure of passing to a generalised orbifold is indeed invertible \cite[Sec.\,4]{Brunner:2014lua}.

\item From part 1 (and \cite{Mu-prep}), 
one can conclude that for the values $h,\epsilon$ that correspond to the inverse of the extension from
$\mathcal{C}(sl(2),10)$ to $\mathcal{C}(sp(4),1)$ (which are $h=\exp(\pi i \frac{19}{24})$ and $\epsilon=-1$, see Remark~\ref{rem:which-h}), we get $(\mcI_{h^3,\epsilon})_{\opA_{h,\epsilon}} \cong \mathcal{C}(sl(2),10)$.

\item As will be explained in \cite{Mu-prep}, the fact that set $B = \{\varone,\varphi\}$ in our example has two elements is related to the observation that the $11{\times}11$ matrix $Z_{E_6}$ for the $E_6$ modular invariant satisfies $Z_{E_6} Z_{E_6} = 2\,Z_{E_6}$. 
Analogously, the $29{\times}29$ matrix $Z_{E_8}$ for the $E_8$ modular invariant  of the $SU(2)$-WZW model at level 28 satisfies $Z_{E_8} Z_{E_8} = 4\, Z_{E_8}$. 
Thus the inverse orbifold to the  extension from $\mcC(sl(2),28)$ to $\mcC(G_2,1)$ (which is a Fibonacci category) will be described by an orbifold datum with $A = \opid^{\oplus 4}$ in this Fibonacci category. 

Another direction in which one could attempt to generalise the example from Theorem~\ref{thm:main} is to replace the Ising-type categories by more general Tambara-Yamagami categories.

We hope to investigate these examples in more detail in the future.
\end{enumerate}

\end{rem}

\begin{rem}\label{rem:enriched}
The notion of an orbifold datum is very similar to that of a monoidal category enriched over a braided category \cite{Morrison:2017}, and, equivalently, to module tensor categories \cite{Morrison:2018}. The latter also appear in the description of so-called anchored planar algebras \cite{Henriques:2016}. 
For instance, the example in Theorem~\ref{thm:main} can be thought of as a category with two simple objects $\varone$ and $\varphi$, enriched in $\mcI(\zeta,\epsilon)$, such that $\varone$ is the tensor unit and e.g.\ the Hom-object $\varphi \otimes \varphi \to \varphi$ is given by $\sigma \in \mcI(\zeta,\epsilon)$. In the context of anchored planar algebras, the corresponding example is treated in \cite[Ex.\,3.15]{Henriques:2016}.
We also note that
via the connection to enriched monoidal categories, the construction of $\mcC_{\opA}$ should correspond to the enriched centre of \cite{KZ}.

We will not pursue this point of view in the present paper but hope to return to it in the future.
\end{rem}

To conclude the introduction, let use give two more motivations for the work presented in this paper.

\medskip

The first motivation stems from the classification of modular fusion categories.
There is, up to equivalence, only a finite number of modular fusion categories with a given number of simple objects \cite{Bruillard:2015kdw}, a property called {\em rank-finiteness}. It therefore makes sense to classify such categories by number of simple objects (the rank). This has been done up to rank 5 \cite{Rowell:2007dge,Bruillard:2015}, and rank 6 is in progress \cite{Green:2019,Creamer:2019}.

Up to these ranks, the classification produces only group- and Lie-theoretic examples. A systematic approach to produce more exotic examples of modular categories is to consider Drinfeld doubles of (the fusion categories associated to) finite index subfactors, see e.g.\ \cite{Hong:2007,Evans:2010yr,Jones:2013,Gannon:2016bch} for more details and references. The smallest and most famous exotic example is the Haagerup subfactor whose fusion category has rank 6, and whose Drinfeld double has rank 12. This example is currently very much out of reach of the direct classification of modular categories by rank.

Drinfeld doubles of fusion categories are by definition examples of modular fusion categories of trivial Witt class \cite{Davydov:2010}. One can therefore think of the systematic study of fusion categories as the exploration of the trivial Witt class of modular categories. 
On the other hand, passing from $\mcC$ to $\mcC_\opA$ does -- conjecturally -- stay within the same Witt class.
The problem we would like to advocate with the simple example considered in this paper is:
\begin{quote}
	Try to explore non-trivial Witt classes of modular categories by systematically studying orbifold data $\opA$ in a given small representative of a Witt class, e.g.\ by solving the simplified equations in Section~\ref{sec:poly} and then computing $\mcC_{\opA}$.
\end{quote}

\medskip

The second motivation stems from the study of topological phases of matter.
Namely,
unitary modular categories $\mcC$ model anyons in two-dimensional topological phases of matter, see e.g.\ \cite{Rowell:2017}. If such a topological phase has a finite symmetry group $G$, one can try to gauge this symmetry. If this is unobstructed, one arrives at a new topological phase described by the unitary modular category $\mcC_G^{\times,G}$ \cite{Barkeshli:2014cna,CGPW}. This is (conjecturally) a special case of the generalised orbifold construction above \cite[Sec.\,1.1]{Mulevicius:2020bat}. In this sense one can think of an orbifold datum as a generalised notion of symmetry for a topological phase of matter. In another approach \cite{CZW}, Hopf monads are used to describe generalised symmetries.

In fact, by Remark~\ref{rem:E6}
the generalised orbifold (or gauging of the ``generalised symmetry'') by one of the specific examples we describe in this paper is inverse to the anyon condensation of the $E_6$ commutative algebra in the chiral $su(2)_{10}$ WZW-model. See \cite{Kong:2013aya} and references therein for more details on anyon condensation.

Since the Ising category does not support universal quantum computation, but $\mcC(sl(2),10)$ does (see \cite{Naidu:2009,Rowell:2017}), it would be interesting to see if the fact that a generalised orbifold can turn the former into (a close relative of) the latter has applications in topological quantum computation.

Finally, assuming the validity of the relation in Remark~\ref{rem:enriched}, an orbifold datum in the modular fusion category given by representations of a rational vertex operator algebra describes a gapless edge of a 2d topological order \cite{Kong:2019byq,Kong:2019cuu}.

\bigskip

This paper is organised as follows. In Section~\ref{sec:poly} we describe our simplifying assumptions for an orbifold datum $\opA$ and derive the polynomial equations it has to satisfy. In Section~\ref{sec:CA-properties} we study the corresponding orbifold modular category $\mcC_\opA$ and give explicit expressions for its global dimension and number of simple objects. In Section~\ref{sec:Ising} we review Ising-type modular categories, and in Section~\ref{sec:orbdata} we classify solutions for Fibonacci-type orbifold data in these categories. In Section~\ref{sec:adjoint} we illustrate how to gain information about objects of the orbifold modular category form their underlying bimodules. An appendix contains some of the more technical calculations.

\subsubsection*{Acknowledgements}

We would like to thank Terry Gannon for suggesting the $E_6$-extension of $\mcC(sl(2),10)$ as a place to look for a new example of orbifold data.
We are grateful to Nils Carqueville and Zhenghan Wang for helpful discussions and comments on a draft of this article.
VM was supported by the Research Training Group RTG\,1670 of the Deutsche Forschungsgemeinschaft.
IR acknowledges support from the RTG\,1670 and the Cluster of Excellence EXC 2121.

\section{Orbifold data via polynomial equations}\label{sec:poly}

In this section we review the notion of an orbifold datum in a modular fusion category
(i.e.\ a semisimple modular tensor category). Under a number of simplifying assumptions, we give a description of an orbifold datum in terms of polynomial equations, much like the pentagon equation for the associator of a fusion category 

\medskip

Let $\mcC$ be a modular fusion category
over an algebraically closed field $\opk$.
We fix a set $I$ of representatives of its isomorphism classes of
simple objects so that $\opid\in I$.
For $i,j,k\in I$, we denote by $N_{ij}^k$ the dimension of the Hom space $\mcC(i\otimes j, k)$.
For the rest of the section we make the
\begin{equation}
\tag{A1}
\label{eq:assumption1}
\boxed{~\text{assumption:} \qquad N_{ij}^k \in \{0,1\} ~.~ }
\end{equation}
It is also going to be useful to fix a nonzero element 
(i.e.\ a basis) of those spaces $\mcC(i\otimes j, k)$ which are one-dimensional.
We will denote this element, as well as its dual in $\mcC(k, i\otimes j)$ with respect to the composition pairing by
\begin{equation}
\lambda_{(ij)k} = \pic[2.0]{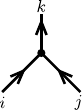} ~,
\qquad
\lambda^{(ij)k} = \pic[2.0]{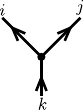} ~,
\end{equation}
i.e.\ they satisfy $\lambda_{(ij)k} \circ \lambda^{(ij)k} = \id_k$.
We follow the conventions of \cite{FRS1}, in particular the diagrams are to be read from bottom to top.
If $i=\opid$ and $k=j$ we choose $\lambda_{(\opid j)j}$ to be the left unitor $\opid \otimes j \to j$ of $\mcC$. In the same way, we choose $\lambda_{(i \opid)i}$ to be the right unitor.

\medskip

The associator morphisms are encoded in the $F$-matrix and its inverse $G$, whose elements are indexed by $i,j,k,l,p,q \in I$.
They are defined by the following relations:
\begin{equation}
\pic[2.0]{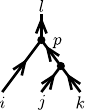} = \sum_{q\in I} F^{(ijk)l}_{pq} \pic[2.0]{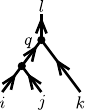}, \quad
\pic[2.0]{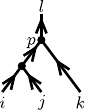} = \sum_{q\in I} G^{(ijk)l}_{pq} \pic[2.0]{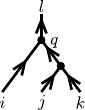}.
\end{equation}
Since $\lambda_{(\opid x)x}$ and $\lambda_{(x \opid)x}$ are unitors, if at least one of $i,j,k$ is $\opid$ and the corresponding $F$-matrix element is not automatically zero by the fusion rules, we have $F^{(ijk)l}_{pq} = 1$.

Similarly, the braiding morphisms are given by the $R$-matrix and its inverse, whose elements are, for $i,j,k\in I$:
\begin{equation}
\pic[2.0]{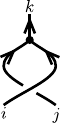} = R^{(ij)k} \pic[2.0]{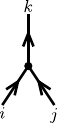}, \quad
\pic[2.0]{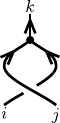} = R^{-(ij)k} \pic[2.0]{2_R_rhs.pdf}.
\end{equation}

An orbifold datum in $\mcC$ \cite{CRS1,CRS3} is a tuple $\opA = (A, T, \a, \abar, \psi, \phi)$.
Here, $A$ is a symmetric $\D$-separable Frobenius algebra in $\mcC$, and $T$ is an $A$-$A\otimes A$-bimodule. We will index the first and second tensor factor in $A \otimes A$ by $1$ and $2$, respectively, to distinguish between the two $A$ actions on $T$ from the right. Then $\a$ and $\abar$ are $A$-$A \otimes A \otimes A$-bimodule isomorphisms
\begin{equation}
\a : T \otimes_2 T \ra T \otimes_1 T, \qquad
\abar: T \otimes_1 T \ra T \otimes_2 T ~,
\end{equation}
where $i=1,2$ denotes the tensor product over $A$ with respect to the indicated action. Finally, $\psi: A \ra A$ is an $A$-$A$-bimodule isomorphism and $\phi \in \opk^\times$ is a number.
The constituents of an orbifold datum must fulfil the conditions (O1)--(O8) listed in \cite[Sec.\,2.3]{Mulevicius:2020bat}. These conditions first appeared in~\cite{CRS1,CRS3} under the name special orbifold datum.
For example, conditions (O2) and (O3) state that $\a$ and $\abar$ are inverse to each other up to the action of $\psi$'s.

\medskip

The main goal of this paper is to construct an example of an orbifold datum.
To this end, in addition to \eqref{eq:assumption1} we make another
\begin{equation}
\tag{A2}
\label{eq:assumption2}
\boxed{~\text{assumption:} \quad A = \bigoplus_{a\in B} \opid ~,~}
\end{equation}
where $B$ is a finite set.
The different copies of $\opid$ in the direct sum defining $A$ will be distinguished by adding an index, i.e.\ $\opid_b$, $b\in B$.
One then has the decomposition
\begin{equation}
T = \bigoplus_{a,b,c\in B} {}_a t_{bc} ~,
\end{equation}
where for $a,b,c\in B$, ${}_a t_{bc} \in \mcC$ is an object.
The bimodule structure of $T$ is such that ${}_a t_{bc}$ is an $\opid_a$-($\opid_b \otimes \opid_c$)-bimodule, 
where the action of the corresponding summand $\opid_d$, $d\in B$ is via the unitor morphisms in $\mcC$ and the other summands act by zero.
At this point we make one more
\begin{equation}
\tag{A3}
\label{eq:assumption3}
\boxed{~\text{assumption:} \raisebox{-.5em}{\rule{0pt}{1.7em}} \quad
\text{each ${}_a t_{bc}$ is either $0$ or a simple object.}~}
\end{equation}

\medskip

To obtain the rest of the orbifold datum, one must find the morphisms $\a, \abar, \psi$ and the scalar $\phi$ and verify conditions (O1)-(O8).
From the assumptions \eqref{eq:assumption1}--\eqref{eq:assumption3} one gets decompositions $\a = \bigoplus_{a,b,c,d\in B} \a^a_{bcd}$, $\abar = \bigoplus_{a,b,c,d\in B} \abar^a_{bcd}$, where
\begin{align}
\a^a_{bcd}&: \bigoplus_{p\in B} {}_a t_{bp} \otimes {}_p t_{cd} \longrightarrow \bigoplus_{q\in B} {}_a t_{qd} \otimes {}_q t_{bc}~, \quad
\\
\abar^a_{bcd}&: \bigoplus_{p\in B} {}_a t_{pd} \otimes {}_p t_{bc} \longrightarrow \bigoplus_{q\in B} {}_a t_{bq} \otimes {}_q t_{cd}~,
\label{eq:alpha_abcd}
\end{align}
as well as
\begin{equation}
\label{eq:psi_decomp}
\psi = \bigoplus_{a\in B} \psi_a \cdot \id_{\opid_a}~, \quad
\psi_a \in \opk^\times~.
\end{equation}
The conditions these data must satisfy are somewhat simpler if one uses the following rescaled version of morphisms $\a^a_{bcd}$ instead:
\begin{equation}
\label{eq:alpha_ito_f}
\a^a_{bcd}  = \frac{1}{\psi^2_c} \, f^a_{bcd}~, \quad a,b,c,d\in B~,
\end{equation}
where $f^a_{bcd}$ are morphisms having the same source and target as $\a^a_{bcd}$ in \eqref{eq:alpha_abcd}.
Similarly, we denote by $g^a_{bcd}$ a rescaling of $\abar^a_{bcd}$ (we will see later that this is the inverse of $f^a_{bcd}$):
\begin{equation}
\label{eq:abar_ito_g}
\abar^a_{bcd} \big|_{pq} = \frac{\psi^2_c}{\psi^2_p \psi^2_q} \, g^a_{bcd}\big|_{pq}~,
\end{equation}
where we use the notation
\begin{align}
& \a^a_{bcd} \big|_{pq}, ~f^a_{bcd} \big|_{pq} \,:\, {}_a t_{bp} \otimes {}_p t_{cd} \longrightarrow {}_a t_{qd} \otimes {}_q t_{bc}~, \nonumber \\
& \abar^a_{bcd} \big|_{pq}, ~g^a_{bcd} \big|_{pq} \,:\, {}_a t_{pd} \otimes {}_p t_{bc} \longrightarrow {}_a t_{bq} \otimes {}_q t_{cd}~,
\end{align}
for the restrictions of $\a^a_{bcd}$, $f^a_{bcd}$, $\abar^a_{bcd}$ and $g^a_{bcd}$ to the corresponding direct summands.
For $i\in I$, we introduce scalars $f^{a,~i}_{bcd, ~pq}$ and $g^{a, ~i}_{bcd, ~pq}$ such that
\begin{equation}
\label{eq:fg_scalars}
f^a_{bcd} \big|_{pq} = \sum_{i\in I} f^{a,~i}_{bcd, ~pq} \pic[2.0]{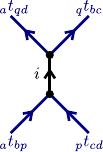}, \quad
g^a_{bcd} \big|_{pq} = \sum_{i\in I} g^{a,~i}_{bcd, ~pq} \pic[2.0]{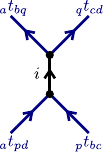}.
\end{equation}
One can now translate the conditions (O1)--(O8) into equations for these scalars. The result is:

\newcommand\nexteqn{\\[-.5em] \nonumber & \hspace{2em} \rule{28em}{0.2pt} \\ }
\begin{table}
\captionsetup{format=plain}
\centering
\begin{align}
\nonumber
& \sum_{i,j\in I}
F^{\left( {}_a t_{re} ~ {}_r t_{sd} ~ {}_s t_{bc} \right) g}_{k \, i} ~
G^{\left( {}_a t_{bp} ~ {}_p t_{cq} ~ {}_q t_{de} \right) g}_{j \, m} ~
F^{\left( {}_s t_{bc} ~ {}_a t_{sq} ~ {}_q t_{de} \right) g}_{i \, j} ~
R^{\left( {}_a t_{sq} ~ {}_s t_{bc} \right) j} ~
R^{-\left( {}_s t_{bc} ~ i \right) g} 
\\[-0.8em] \nonumber
& \hspace{35pt} \cdot
f^{a, ~i}_{sde, ~qr} ~
f^{a, ~j}_{bcq, ~ps}
\\ \tag{O1}\label{O1}
&= \sum_{x\in B, l\in I}
F^{\left( {}_a t_{re} ~ {}_r t_{bx} ~ {}_x t_{cd} \right) g}_{k \, l} ~
G^{\left( {}_a t_{bp} ~ {}_p t_{xe} ~ {}_x t_{cd} \right) g}_{l \, m} ~
f^{p, ~m}_{cde, ~qx} ~
f^{a, ~l}_{bxe, ~pr} ~
f^{r, ~k}_{bcd, ~xs}
\nexteqn
\tag{O2}\label{O2}
&\sum_{q\in B} f^{a,~i}_{bcd, ~qr} \cdot g^{a,~i}_{bcd, ~pq} = \d_{pr} ~ N_{{}_a t_{pd} ~ {}_p t_{bc}}^i
\nexteqn
\tag{O3}\label{O3}
&\sum_{q\in B} g^{a,~i}_{bcd,~qr}\cdot f^{a,~i}_{bcd,~pq} = \d_{pr} ~ N_{{}_a t_{bp} ~ {}_p t_{cd}}^i
\nexteqn
\nonumber
& \sum_{d\in B, i,j\in I}
\frac{\psi_b^2 \psi_d^2}{\psi_q^2 \psi_p^2} ~
f^{a, ~j}_{bcd, ~pq} ~
g^{a, ~i}_{b'cd, ~qp} ~
F^{\left( {}_p t_{cd} ~ m ~ {}_q t_{bc}  \right) j}_{ {}_a t_{bp} ~ {}_a t_{qd}} ~
G^{\left( {}_p t_{cd} ~ m ~ {}_q t_{b'c} \right) i}_{ {}_a t_{qd} ~ {}_a t_{b'p} }
\\[-.5em]
\tag{O4}\label{O4}
&\hspace{60pt}
\cdot
R^{\left( {}_p t_{cd} ~ {}_a t_{bp} \right) j} ~
R^{-\left( {}_a t_{b'p} ~ {}_p t_{cd} \right) i} ~
\frac{\dim j}{ \dim {}_a t_{qd}} \frac{\dim i}{\dim {}_a t_{bp}}
~=~ 
\d_{bb'} ~ N_{m ~ {}_q t_{bc}}^{{}_a t_{bp}}
\nexteqn
\nonumber
& \sum_{b\in B, i,j\in I}
\frac{\psi_b^2 \psi_d^2}{\psi_p^2 \psi_q^2} ~
g^{a, ~j}_{bcd, ~pq} ~
f^{a, ~i}_{bcd', ~qp} ~
F^{\left( {}_p t_{bc} ~ m ~ {}_q t_{cd} \right) j}_{{}_a t_{pd} ~ {}_a t_{bq}}~
G^{\left( {}_p t_{bc} ~ m ~ {}_q t_{cd'} \right) i}_{{}_a t_{bq} ~ {}_a t_{pd'}}
\\[-.5em]
\tag{O5}\label{O5}
&\hspace{60pt}
\cdot
R^{-\left( {}_p t_{bc} ~ {}_a t_{pd}  \right) j} ~
R^{\left( {}_a t_{pd'} ~  {}_p t_{bc} \right) i} ~
\frac{\dim j}{\dim {}_a t_{bq}}
\frac{\dim i}{\dim {}_a t_{pd}}
~=~
\d_{dd'} ~ N_{m ~ {}_q t_{cd}}^{{}_a t_{pd}}
\nexteqn
\nonumber
& \sum_{c\in B, i,j\in I}
\frac{\psi_a^2 \psi_c^2}{\psi_q^2 \psi_p^2} ~
f^{a, ~j}_{bcd, ~pq} ~
g^{a', ~i}_{bcd, ~qp} ~
G^{\left( {}_a t_{qd} ~ m ~ {}_p t_{cd} \right) j}_{{}_a t_{bp} ~ {}_q t_{bc}} ~
F^{\left( {}_{a'} t_{qd} ~ m ~ {}_p t_{cd} \right) i}_{{}_q t_{bc} ~ {}_{a'} t_{bp}}
\\[-.5em]
\tag{O6}\label{O6}
&\hspace{180pt}
\cdot\frac{\dim j}{\dim {}_q t_{bc}} \frac{\dim i}{\dim {}_a t_{bp}}
~=~
\d_{aa'} ~ N_{{}_a t_{qd} ~ m}^{{}_a t_{bp}}
\nexteqn
\nonumber
& \sum_{a\in B, i,j\in I}
\frac{\psi_a^2 \psi_c^2}{\psi_p^2 \psi_q^2} ~
g^{a, ~j}_{bcd, ~pq} ~
f^{a, ~i}_{bc'd, ~qp} ~
F^{\left( {}_a t_{pd} ~ m ~ {}_q t_{cd} \right) j}_{{}_p t_{bc} ~ {}_a t_{bq}} ~
G^{\left( {}_a t_{pd} ~ m ~ {}_q t_{c'd} \right) i}_{{}_a t_{bq} ~ {}_p t_{bc'}}
\\[-.5em]
\tag{O7}\label{O7}
&\hspace{180pt}
\cdot\frac{\dim j}{\dim {}_a t_{bq}}
\frac{\dim i}{\dim {}_p t_{bc}}
~=~
\d_{cc'} ~ N_{m ~ {}_q t_{cd}}^{{}_p t_{bc}}
\nexteqn
\tag{O8}\label{O8}
&\sum_{b,c\in B} \psi_b^2 \psi_c^2 \dim {}_a t_{bc} =
\sum_{b,c\in B} \psi_b^2 \psi_c^2 \dim {}_b t_{ca} =
\sum_{b,c\in B} \psi_b^2 \psi_c^2 \dim {}_c t_{ab} =
\psi_a^2 \phi^{-2}
\end{align}
\caption{Polynomial equations defining an orbifold datum $\opA$ in a modular fusion category $\mcC$ under assumptions \eqref{eq:assumption1}--\eqref{eq:assumption3}.
The sum over $d\in B$ in \eqref{O4} is restricted to those $d$ for which ${}_at_{qd} \neq 0$. Analogous restrictions apply to the sums over $B$ in \eqref{O5}--\eqref{O7}.}
\label{tab:orb_eqs}
\end{table}

\begin{prp}\label{prop:orb-polys}
Under the assumptions \eqref{eq:assumption1}, \eqref{eq:assumption2}, \eqref{eq:assumption3}, giving an orbifold datum in $\mcC$ is equivalent to giving a set of scalars
\begin{equation}
f^{a,~i}_{bcd, ~pq}, \quad g^{a, ~i}_{bcd, ~pq}, \quad \psi_a, \quad \phi, \quad a,b,c,d,p,q\in B, \quad i\in I,
\end{equation}
which satisfy the equations in Table \ref{tab:orb_eqs}.
\end{prp}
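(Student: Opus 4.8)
The plan is to translate each of the categorical conditions (O1)--(O8) on the orbifold datum $\opA = (A, T, \a, \abar, \psi, \phi)$ into the stated scalar equations, one by one, using the assumptions \eqref{eq:assumption1}--\eqref{eq:assumption3} to decompose everything into matrix coefficients. Since $A = \bigoplus_{a \in B} \opid$ with the action on $\opid_a$ through unitors, every bimodule in sight splits as a direct sum indexed by elements of $B$, and the conditions (O1)--(O8) -- which are equalities of bimodule morphisms -- split accordingly into equalities of morphisms in $\mcC$ between the simple (or zero) objects ${}_a t_{bc}$ and their tensor products. Under \eqref{eq:assumption1} and \eqref{eq:assumption3}, each such Hom-space is at most one-dimensional, so after expanding all associators via the $F$- and $G$-matrices and all braidings via the $R$-matrices (as fixed in the preamble), one reads off a scalar identity by composing with the chosen bases $\lambda_{(ij)k}$, $\lambda^{(ij)k}$. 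The passage to the rescaled morphisms $f^a_{bcd}$, $g^a_{bcd}$ via \eqref{eq:alpha_ito_f}, \eqref{eq:abar_ito_g} is a bookkeeping device that absorbs the $\psi$-factors appearing in (O2), (O3) and keeps the remaining equations in the displayed form.

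Concretely, I would proceed as follows. First fix, once and for all, bases of the relevant Hom-spaces compatible with the decomposition of $T \otimes_i T$ (for $i = 1,2$) into summands $\bigoplus_p {}_a t_{b p} \otimes {}_p t_{cd}$, etc.; this is where the scalars $f^{a,i}_{bcd,pq}$, $g^{a,i}_{bcd,pq}$ in \eqref{eq:fg_scalars} are introduced. Then treat the "inverse up to $\psi$" conditions (O2), (O3) first: these say $\abar \circ \a$ and $\a \circ \abar$ equal the identity dressed with $\psi^{\pm 2}$'s, and under the decomposition they become the matrix identities $\sum_q f^{a,i}_{bcd,qr} g^{a,i}_{bcd,pq} = \delta_{pr} N^i_{{}_a t_{pd}\, {}_p t_{bc}}$ and its mirror, once the $\psi$-rescalings \eqref{eq:alpha_ito_f}, \eqref{eq:abar_ito_g} are inserted. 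In particular this shows $g$ is genuinely the inverse matrix of $f$, as promised after \eqref{eq:abar_ito_g}. Next handle the "big" pentagon-like condition (O1): it is an equality of two composites of $\a$'s and associators around a hexagon/pentagon of fusions, and expanding each side into $F$, $G$, $R$ factors and the $f$-coefficients and then comparing the coefficient of a fixed basis vector yields \eqref{O1}. Conditions (O4)--(O7) are the four "compatibility with the Frobenius structure / balancing" conditions; each involves a partial trace (hence the ratios of quantum dimensions $\dim i / \dim {}_a t_{bp}$ and the factor from $\psi$'s), and unwinding the trace over the simple object produces the stated sums with the dimension quotients. Finally (O8) is the normalisation/sphericality condition, which upon taking dimensions of the relevant objects collapses to the three equal sums of $\psi_b^2 \psi_c^2 \dim {}_a t_{bc}$ being $\psi_a^2 \phi^{-2}$. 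Conversely, any set of scalars satisfying Table~\ref{tab:orb_eqs} reassembles into morphisms $\a$, $\abar$, $\psi$ and a scalar $\phi$ via the same formulas, and the computation runs backwards to verify (O1)--(O8); the two directions are manifestly inverse, giving the claimed bijection.

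The main obstacle I expect is purely organisational rather than conceptual: keeping consistent conventions for the two right $A$-actions on $T$ (the "$1$" and "$2$" legs), for which way associators and braidings are resolved, and for the placement of the $\psi$-rescalings, so that the signs of the $R$-exponents and the exact pattern of $F$ versus $G$ in \eqref{O1} and \eqref{O4}--\eqref{O7} come out precisely as displayed. A secondary subtlety is justifying that the partial traces appearing when one restricts (O4)--(O7) to simple summands really do produce the quantum-dimension ratios shown -- this uses sphericality of $\mcC$ and the fact that each ${}_a t_{bc}$ is simple, so that $\ev$ and $\coev$ on it are determined up to the scalar $\dim {}_a t_{bc}$. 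Since the excerpt directs the reader to \cite{Mulevicius:2020bat} for the precise statements of (O1)--(O8), I would carry out the expansion for (O2), (O3), and (O8) in full as the illustrative base cases and then indicate that (O1) and (O4)--(O7) follow by the same mechanism, the diagrammatic details being relegated to the appendix.
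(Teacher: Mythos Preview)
Your proposal is correct and follows essentially the same approach as the paper: decompose all morphisms along the direct sum $A=\bigoplus_{a\in B}\opid$, express $\alpha,\bar\alpha$ through the scalars $f,g$ via \eqref{eq:fg_scalars}, and translate each of (O1)--(O8) into the corresponding entry of Table~\ref{tab:orb_eqs} by expanding associators and braidings in the chosen bases. The only notable difference is expository: the paper works out (O1) --- the pentagon, the most intricate case --- in full in the appendix and omits the others, whereas you propose doing the easier conditions (O2), (O3), (O8) explicitly and deferring (O1), (O4)--(O7); the paper's choice better demonstrates that the bookkeeping obstacle you flag (the $R$-exponent signs and $F$ vs.\ $G$ placement) is actually manageable.
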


\begin{proof}
The reformulation of conditions (O1)--(O8) from \cite[Def.\,2.2]{Mulevicius:2020bat} in terms of the scalars defining the orbifold datum under assumptions \eqref{eq:assumption1}--\eqref{eq:assumption3} is tedious but straightforward.
The labelling conventions we use are shown in Appendix~\ref{app:convention}.
As an example, the computation for the pentagon identity (O1) is given in Appendix~\ref{app:pentagon}.
The derivation of the other identities is similar and we omit the details.
\end{proof}

\begin{rem}
The identities \eqref{O2} and \eqref{O3} show that 
\begin{equation}
f^a_{bcd}~:~ \bigoplus_{p\in B} {}_a t_{bp} \otimes {}_p t_{cd} ~\longleftrightarrow~ \bigoplus_{q\in B} {}_a t_{qd} \otimes {}_q t_{bc} ~:~ g^a_{bcd}
\end{equation}
are indeed inverse to each other. In particular, the scalars $g^{a, ~i}_{bcd, ~pq}$ are uniquely determined by the scalars $f^{a,~i}_{bcd, ~pq}$.
\end{rem}

\medskip

Let us recall a (rather strong) notion of an isomorphism of orbifold data in $\mcC$ as introduced in \cite[Def.\,3.12]{CRS3}:
A \textit{$T$-compatible isomorphism} from an orbifold datum $\opA = (A, T, \a, \abar, \psi, \phi)$ to $\widetilde{\opA} = (A, \widetilde{T}, \widetilde{\a}, \widetilde{\abar}, \psi, \phi)$ is an isomorphism $\rho: T \ra \widetilde{T}$ of $A$-$A\otimes A$-bimodules such that
\begin{equation}
\label{eq:T-compatible_iso}
(\rho \otimes \rho)\circ\a = \widetilde{\a} \circ (\rho \otimes \rho)~.
\end{equation}
The orbifold TQFTs obtained from two orbifold data related by a $T$-compatible isomorphism are isomorphic, see \cite[Lem.\,3.13]{CRS3}.

Given a scalar $\xi\in\opk^\times$ one can define a new orbifold datum
\begin{equation}
\label{eq:rescaled-orbifold}
    \opA_\xi = \big(T,\, A, \,\xi\,\a,\, \xi\,\abar,\, \xi^{-1/2}\psi,\, \xi^{1/2}\phi\big) ~.
\end{equation}
It is easy to check that $\opA_\xi$ again satisfies the conditions (O1)-(O8) and that the corresponding orbifold TQFTs are isomorphic.
We will refer to $\opA_\xi$ as a \textit{rescaling} of $\opA$.

\medskip

Consider a $T$-compatible isomorphism $\rho: T \ra T$ of orbifold data satisfying the assumptions 
\eqref{eq:assumption1}--\eqref{eq:assumption3}.
In this case one has:
\begin{equation}\label{eq:rho-direct-sum}
\rho = \bigoplus_{a,b,c\in B} {}_a \l_{bc} \cdot \id_{{}_a t_{bc}}~,
\end{equation}
where there is one scalar ${}_a \l_{bc}\in\opk^\times$ for each non-zero ${}_a t_{bc}$.
It follows from \eqref{eq:T-compatible_iso} that $\a$ and $\widetilde{\a}$ are related by
\begin{equation}
\widetilde{\a}^a_{bcd} \big|_{pq} = 
\frac{{}_a \l_{qd} ~ {}_q \l_{bc}}{{}_a \l_{bp} ~ {}_p \l_{cd}} \cdot
\a^a_{bcd} \big|_{pq}, \quad
a,b,c,d,p,q\in B ~,
\end{equation}
or equivalently, the sets of scalars $f^{a, ~i}_{bcd, ~pq}$, $g^{a, ~i}_{bcd, ~pq}$, $\widetilde{f}^{a, ~i}_{bcd, ~pq}$, $\widetilde{g}^{a, ~i}_{bcd, ~pq}$ are related by
\begin{equation}
\label{eq:f_ftilde_rels}
\widetilde{f}^{a, ~i}_{bcd, ~pq} = 
\frac{{}_a \l_{qd} ~ {}_q \l_{bc}}{{}_a \l_{bp} ~ {}_p \l_{cd}} \cdot
f^{a, ~i}_{bcd, ~pq} ~ \quad
\widetilde{g}^{a, ~i}_{bcd, ~pq} =
\frac{{}_a \l_{bq} ~ {}_q \l_{cd}}{{}_a \l_{pd} ~ {}_p \l_{bc}} \cdot
g^{a, ~i}_{bcd, ~pq} ~.
\end{equation}
Evidently, if $f^{a, ~i}_{bcd, ~pq}$, $g^{a, ~i}_{bcd, ~pq}$, $\psi_a$, $\phi$ solve the equations in Table \ref{tab:orb_eqs}, then so do $\widetilde{f}^{a, ~i}_{bcd, ~pq}$, $\widetilde{g}^{a, ~i}_{bcd, ~pq}$, $\psi_a$, $\phi$.

\medskip

One can exploit the invariance of equations in Table \ref{tab:orb_eqs} under transformations \eqref{eq:f_ftilde_rels} to simplify the search for solutions. Will will illustrate this in the next lemma for solutions which in addition satisfy the following unitality condition:
\begin{equation}
\tag{A4}
\label{eq:assumption4}
 \quad
 \boxed{
\begin{array}{ll}
\text{assumption:} &
\text{there is a distinguished element $\varone \in B$, such that}\\
&
{}_a t_{\varone b} = {}_a t_{b\varone} = 
\begin{cases}
\opid ~\text{, if $a=b$}\\
0 ~\text{, if $a\neq b$}
\end{cases}.
\end{array}
}
\end{equation}

\begin{lem}
\label{lem:gauge_transfs}
Suppose assumptions \eqref{eq:assumption1}--\eqref{eq:assumption4} hold. 
Let $a,b,c\in B$ be such that ${}_at_{bc} \neq 0$.
Then the morphisms $f^a_{\varone bc}$, $f^a_{b\varone c}$, $f^a_{bc\varone}$ are determined by the scalars
$f^{a, ~ {}_a t_{bc}}_{\varone bc, ~ ab}$,
$f^{a, ~ {}_a t_{bc}}_{b\varone c, ~ cb}$,
$f^{a, ~ {}_a t_{bc}}_{bc\varone , ~ ca}$, all of which are non-zero. Via a suitable transformation of the form \eqref{eq:f_ftilde_rels}, we can achieve that
\begin{equation}\label{eq:1-normalisation-condition}
    f^{a, ~ {}_a t_{bc}}_{\varone \, b\,c, ~ ab} ~=~ 
f^{a, ~ {}_a t_{bc}}_{b\,\varone\,c, ~ cb} ~=~
f^{a, ~ {}_a t_{bc}}_{b\,c\,\varone , ~ ca} ~=~ 1 ~.
\end{equation}
\end{lem}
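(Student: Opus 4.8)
The plan is to isolate the effect of the assumption \eqref{eq:assumption4} on the morphisms $f^a_{\varone bc}$, $f^a_{b\varone c}$, $f^a_{bc\varone}$ and then to use the gauge freedom \eqref{eq:f_ftilde_rels} to normalise the three resulting scalars to $1$. First I would unpack what \eqref{eq:assumption4} does to the source and target of, say, $f^a_{\varone bc}$. From \eqref{eq:alpha_abcd}, $f^a_{\varone bc}|_{pq}$ maps $\bigoplus_p {}_at_{\varone p}\otimes {}_pt_{bc}$ to $\bigoplus_q {}_at_{qc}\otimes {}_qt_{\varone b}$. By \eqref{eq:assumption4} the summand ${}_at_{\varone p}$ is nonzero only for $p=a$ (and equals $\opid$), and ${}_qt_{\varone b}$ is nonzero only for $q=b$ (and equals $\opid$). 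Hence the only possibly nonzero restriction is $f^a_{\varone bc}|_{ab}\colon \opid\otimes {}_at_{bc}\to {}_at_{bc}\otimes\opid$, which by \eqref{eq:fg_scalars} is a single scalar times the (left/right unitor) morphism, so it is captured by $f^{a,\,{}_at_{bc}}_{\varone bc,\,ab}$. The same analysis with $b=\varone$ (using ${}_at_{bp}\neq 0$ forces $p=b$ when $c$-slot... actually one reads off $f^a_{b\varone c}|_{pq}\colon {}_at_{bp}\otimes {}_pt_{\varone c}\to {}_at_{qc}\otimes {}_qt_{b\varone}$, so ${}_pt_{\varone c}\neq 0$ forces $p=c$ and ${}_qt_{b\varone}\neq 0$ forces $q=b$) shows $f^a_{b\varone c}$ is determined by $f^{a,\,{}_at_{bc}}_{b\varone c,\,cb}$, and with $c=\varone$ similarly by $f^{a,\,{}_at_{bc}}_{bc\varone,\,ca}$.

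Next I would show these three scalars are nonzero. This follows from \eqref{O2} and \eqref{O3}: specialising \eqref{O3} to the case with one index equal to $\varone$ — say to $g^a_{\varone bc}f^a_{\varone bc}$ — collapses the sum over $q\in B$ to the single term $q=a$ (by the argument above applied to the composite source/target), giving $g^{a,\,i}_{\varone bc,\,aa}\cdot f^{a,\,i}_{\varone bc,\,ab}=\delta_{..}N^i_{\opid\otimes\, {}_at_{bc}}$; taking $i={}_at_{bc}$ makes the right-hand side $1$, so $f^{a,\,{}_at_{bc}}_{\varone bc,\,ab}\neq 0$. (Alternatively: $f^a$ and $g^a$ are mutually inverse isomorphisms by the Remark after Proposition~\ref{prop:orb-polys}, and under \eqref{eq:assumption4} the relevant block is $1\times 1$, so its entry cannot vanish.) The same reasoning gives $f^{a,\,{}_at_{bc}}_{b\varone c,\,cb}\neq 0$ and $f^{a,\,{}_at_{bc}}_{bc\varone,\,ca}\neq 0$.

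Finally I would perform the normalisation. By \eqref{eq:f_ftilde_rels}, a gauge transformation multiplies $f^{a,\,{}_at_{bc}}_{\varone bc,\,ab}$ by $\frac{{}_a\l_{bc}\,{}_b\l_{\varone b}}{{}_a\l_{\varone a}\,{}_a\l_{bc}} = \frac{{}_b\l_{\varone b}}{{}_a\l_{\varone a}}$, so it is sensible to first fix ${}_a\l_{\varone a}={}_a\l_{a\varone}=1$ for all $a$ (these are the scalars attached to the copies of $\opid$ forced by \eqref{eq:assumption4}); one checks this is consistent, i.e.\ there is no constraint forcing otherwise, because $\rho$ in \eqref{eq:rho-direct-sum} is only required to be a bimodule isomorphism and the unitors already intertwine the actions. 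With that choice, the transformation of $f^{a,\,{}_at_{bc}}_{\varone bc,\,ab}$, $f^{a,\,{}_at_{bc}}_{b\varone c,\,cb}$, $f^{a,\,{}_at_{bc}}_{bc\varone,\,ca}$ reduces to multiplication by $\frac{{}_b\l_{bc}\cdot\text{(unit)}}{{}_a\l_{bc}}$-type ratios of the remaining free scalars $\{{}_at_{bc}\neq 0\}\mapsto {}_a\l_{bc}$; I would then argue that these ratios can be chosen (by induction on some complexity measure, or by a direct counting argument comparing the number of free ${}_a\l_{bc}$ against the equations \eqref{eq:1-normalisation-condition}) so that all three scalars become $1$ simultaneously. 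The main obstacle is exactly this last bookkeeping step: one must verify that the three normalisation conditions for each triple $(a,b,c)$ with ${}_at_{bc}\neq 0$ are not over-determined — i.e.\ that the gauge group acts with enough freedom — which requires checking that no cycle of these conditions forces a product of the target values ($1\cdot 1\cdot 1$) to equal something other than $1$ on the nose; I expect this to come down to the associativity/pentagon compatibility already encoded in \eqref{O1}, so that any such consistency obstruction automatically vanishes.
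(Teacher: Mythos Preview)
Your first two paragraphs are fine and essentially match the paper: assumption~\eqref{eq:assumption4} collapses each of $f^a_{\varone bc}$, $f^a_{b\varone c}$, $f^a_{bc\varone}$ to a $1\times 1$ block, and invertibility via \eqref{O2}--\eqref{O3} forces the entry to be nonzero.

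The normalisation step, however, contains a real error. You computed the gauge factor for $f^{a,\,{}_at_{bc}}_{\varone bc,\,ab}$ correctly as ${}_b\l_{\varone b}/{}_a\l_{\varone a}$, but then did not carry out the same computation for the other two. If you do, you find from \eqref{eq:f_ftilde_rels} that the factors for $f^{a,\,{}_at_{bc}}_{b\varone c,\,cb}$ and $f^{a,\,{}_at_{bc}}_{bc\varone,\,ca}$ are ${}_b\l_{b\varone}/{}_c\l_{\varone c}$ and ${}_a\l_{a\varone}/{}_c\l_{c\varone}$ respectively: in every case the generic ${}_a\l_{bc}$ cancels and only the ``unit'' parameters ${}_x\l_{\varone x}$, ${}_x\l_{x\varone}$ survive. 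So the moment you set ${}_a\l_{\varone a}={}_a\l_{a\varone}=1$ for all $a$, you have frozen \emph{all} of the gauge freedom acting on these three scalars; the remaining ${}_a\l_{bc}$ do nothing to them, and no counting argument can rescue the situation.

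The paper proceeds differently. It does not fix ${}_a\l_{\varone a}$, ${}_a\l_{a\varone}$ in advance. Instead it first specialises \eqref{O1} (with $c=d=\varone$, $p=q=e$, $r=s=b$) to obtain the identity
\[
f^{a,\,{}_at_{be}}_{b\varone e,\,eb} \;=\; f^{e,\,\opid}_{\varone\varone e,\,e\varone}\cdot f^{b,\,\opid}_{b\varone\varone,\,\varone b}\,,
\]
so that the middle scalar depends only on one-index data. One then takes ${}_b\l_{b\varone}=(f^{b,\,\opid}_{b\varone\varone,\,\varone b})^{-1}$ and ${}_e\l_{\varone e}=f^{e,\,\opid}_{\varone\varone e,\,e\varone}$, which forces $\widetilde f^{a,\,{}_at_{be}}_{b\varone e,\,eb}=1$ for all $a,b,e$. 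Two further specialisations of \eqref{O1} then show that $\widetilde f^{a,\,{}_at_{de}}_{\varone de,\,ad}=1$ and $\widetilde f^{a,\,{}_at_{bc}}_{bc\varone,\,ca}=1$ hold \emph{automatically}. In other words, your intuition that \eqref{O1} resolves the over-determination is correct, but it enters at the outset---to reduce the number of independent targets to match the available gauge parameters---rather than as a post-hoc consistency check.
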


\begin{proof}
One quickly determines that the expression in \eqref{eq:fg_scalars} e.g.\ for $f^a_{bc\varone }$ has a single term when $p=c$, $q=a$ and vanishes otherwise.
That the corresponding scalars are non zero is implied by the invertibility conditions \eqref{O2} and \eqref{O3}.

The normalisation can be shown as follows:
Take $c=d=\varone $, $p=q=e$, $r=s=b$, $m=k=\opid$, $g = {}_a t_{be}$ in the condition \eqref{O1}.
Then assuming ${}_a t_{be} \neq 0$ one can simplify the resulting equation to
\begin{equation}
f^{a, ~ {}_a t_{be}}_{b\varone e, ~eb} = f^{e, ~ \opid}_{\varone \varone e, ~ e\varone } \cdot f^{b, ~ \opid}_{b\varone \varone , ~ \varone b}.
\end{equation}
Any transformation \eqref{eq:f_ftilde_rels} such that for all $b,e\in B$ one has
\begin{equation}
{}_b \l_{b\varone } = \left( f^{b, ~ \opid}_{b\varone \varone , ~\varone b} \right)^{-1}, \quad
{}_e \l_{\varone e} = f^{e, ~\opid}_{\varone \varone e, ~ e\varone }
\end{equation}
then results in $\widetilde{f}^{a, ~ {}_a t_{be}}_{b\varone e, ~eb} = 1$.
After this transformation, setting $b=c=s=\varone$, $p=q=a$, $r=d$, $k=\opid$, $m = g = {}_a t_{de}$, in \eqref{O1} and assuming ${}_a t_{de} \neq 0$ one finds that $\widetilde{f}^{a, ~ {}_a t_{de}}_{\varone de, ~ ad} = 1$ already holds.
Similarly, by taking $d = e = q = \varone$, $r=s=a$, $p=c$, $m=\opid$, $k = g = {}_a t_{bc}$ and assuming ${}_a t_{bc} \neq 0$ one finds that $\widetilde{f}^{a, ~ {}_a t_{bc}}_{bc\varone , ~ ca} = 1$ holds too.
\end{proof}

\section{The category $\mcC_\opA$ and some of its basic properties}\label{sec:CA-properties}

In this section we recall the condition on an orbifold datum $\opA$ to be simple, and how a simple orbifold datum $\opA \in \mcC$ gives rise to a new modular category $\mcC_\opA$. Under the simplifying assumptions \eqref{eq:assumption1}--\eqref{eq:assumption3} we give expressions in terms of the scalars in Proposition~\ref{prop:orb-polys} that determine whether $\opA$ is simple, compute the global dimension of $\mcC_\opA$, and give the number of simple objects of $\mcC_\opA$.

\medskip

As described in \cite{Mulevicius:2020bat}, an orbifold datum $\opA$ in a modular fusion category $\mcC$ yields a new (multi-)fusion category $\mcC_\opA$.
Its objects are $A$-$A$-bimodules $M$ equipped with $A$-$A\otimes A$-bimodule isomorphisms
\begin{equation}
\tau_1^M: M \otimes_A T \ra T \otimes_1 M, \quad
\tau_2^M: M \otimes_A T \ra T \otimes_2 M,
\end{equation}
which are called \textit{$T$-crossings} and satisfy a list of conditions available in \cite[Sec.\,3.1]{Mulevicius:2020bat}.
Morphisms in $\mcC_\opA$ are morphisms of the underlying $A$-$A$-bimodules which commute with the $T$-crossings.
The tensor product in $\mcC_\opA$ is the tensor product over $A$ of underlying bimodules with $T$-crossings as described in \cite[Sec.\,3.2]{Mulevicius:2020bat}
The tensor unit is given by the regular bimodule $A$ together with the $T$-crossings
\begin{equation}
\label{eq:T-cross_for_A}
\tau_i^A = \pic[2.0]{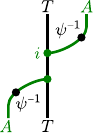}, \quad
\overline{\tau}_i^A = \pic[2.0]{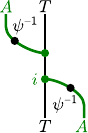} ~, \quad i=1,2 ~,
\end{equation}
where the index shows which right $A$-action on $T$ we use and $\overline{\tau}_i^A$ 
is the pseudo-inverse of $\tau_i$.
For an object $M \in \mcC_\opA$, the pseudo-inverse $\overline{\tau}_i^M$ of the $T$-crossing $\tau_i^M$  is inverse to $\tau_i^M$ up to an action of $\psi$'s, see \cite[Sec.\,3.1]{Mulevicius:2020bat}.

\medskip

We call the orbifold datum $\opA$ \textit{simple} if the tensor unit of $\mcC_\opA$ is simple, that is, if
$\dim \mcC_\opA(A,A) = 1$.
We have \cite[Thm.\,3.17]{Mulevicius:2020bat}:
\begin{thm}\label{thm:CA-is-modular}
Let $\opA$ be a simple orbifold datum in a modular fusion category $\mcC$.
Then $\mcC_{\opA}$ is again a modular fusion category.
\end{thm}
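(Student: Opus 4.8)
The plan is to verify the ingredients in the definition of a modular fusion category one at a time: that $\mcC_\opA$ is (i) a fusion category, (ii) braided, (iii) ribbon, and (iv) has non-degenerate braiding. The first three are obtained by transporting the relevant structure from $\mcC$ along the forgetful functor $U\colon\mcC_\opA\to{}_A\mcC_A$ to the fusion category of $A$-$A$-bimodules, while carefully tracking its interaction with the $T$-crossings; the fourth is the real content.

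For (i): it is already known that $\mcC_\opA$ is a multifusion category, so the only remaining point is that its tensor unit $A$ is simple --- which is exactly the standing hypothesis that $\opA$ is simple, $\dim\mcC_\opA(A,A)=1$. Hence $\mcC_\opA$ is fusion. For (ii): the $T$-crossings $\tau_1^M,\tau_2^M$ are, for objects of $\mcC_\opA$, the analogue of the half-braidings that define objects of the Drinfeld centre (which is itself recovered as $\mcC_\opA$ for a suitable $\opA$), so one defines a braiding $c_{M,N}\colon M\otimes N\to N\otimes M$ by a graphical expression that threads $M$ and $N$ past one another through the $T$-lines, using $\tau_i^M$, $\tau_i^N$ and the braiding of $\mcC$, and then checks the two hexagon axioms. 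This is a diagrammatic computation resting on the $T$-crossing axioms of \cite[Sec.\,3.1]{Mulevicius:2020bat} together with the orbifold conditions \eqref{O1}--\eqref{O8}, in particular the pentagon-type identity \eqref{O1} and the compatibility of $\a,\abar$ with the $R$-matrices. For (iii): $\mcC$ is modular and hence spherical, so its pivotal/spherical structure descends to ${}_A\mcC_A$ and then to $\mcC_\opA$, and the twist $\theta_M$ is built from the twist of $\mcC$, corrected by the $T$-crossings so that it becomes a morphism in $\mcC_\opA$; the ribbon axioms $\theta_{M\otimes N}=(\theta_M\otimes\theta_N)\circ c_{N,M}\circ c_{M,N}$ and $\theta_{M^\vee}=(\theta_M)^\vee$ then reduce to the corresponding identities in $\mcC$.

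The main obstacle is step (iv). The cleanest route is M\"uger's criterion: a ribbon fusion category is modular exactly when its M\"uger centre --- the full subcategory of transparent objects, those $M$ with $c_{N,M}\circ c_{M,N}=\id_{M\otimes N}$ for all $N$ --- is $\simeq\Vect$. So one must show that a transparent $M\in\mcC_\opA$ is a direct sum of copies of the unit; simplicity of $\opA$ then forces $M\cong A$ or $M\cong 0$. I would test transparency against a generating family of objects --- for instance those of $\mcC_\opA$ induced from the simple objects of $\mcC$ equipped with their canonical $T$-crossings --- and use that $\mcC$ is itself modular, and hence has no non-trivial transparent objects, to extract step by step the constraints on $U(M)$ and on $\tau_1^M,\tau_2^M$, concluding that $(M,\tau_1^M,\tau_2^M)$ is, as an object-with-$T$-crossings, a multiple of $(A,\tau_i^A)$. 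An alternative that avoids the explicit classification is a global-dimension argument: compute $\Dim(\mcC_\opA)$ from the orbifold data (using \eqref{O8} and the Frobenius trace pairing), check it is nonzero, and then either invoke a rank/dimension criterion for the $S$-matrix or identify $\mcC_\opA\boxtimes\overline{\mcC}$ with the Drinfeld centre of an auxiliary fusion category built from $\opA$ --- which is automatically modular, so that combined with modularity of $\mcC$ one obtains non-degeneracy of $\mcC_\opA$. In each version the genuinely hard part is the bookkeeping that ties transparency, a condition phrased through the braiding of $\mcC_\opA$, back to the honest non-degeneracy of $\mcC$ itself; the conditions \eqref{O1}--\eqref{O8} are precisely what makes this link work, and the whole step should specialise, in the case where $\mcC_\opA$ reduces to a Drinfeld centre, to M\"uger's theorem that centres of spherical fusion categories are modular, which is a useful consistency check.
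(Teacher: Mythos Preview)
The paper does not prove this theorem at all: it is stated immediately after the sentence ``We have \cite[Thm.\,3.17]{Mulevicius:2020bat}:'' and is simply quoted from that reference, with no argument given. There is therefore no proof in the present paper to compare your proposal against.

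As to the proposal itself: what you have written is a plausible high-level outline of the kind of argument that \cite{Mulevicius:2020bat} carries out, but it is not a proof. Step (iv) in particular is where all the work lies, and both routes you sketch remain at the level of strategy rather than argument. Saying that one would ``test transparency against a generating family of objects'' and ``extract step by step the constraints'' does not tell us which objects generate, why the resulting constraints force $U(M)$ to be a multiple of $A$, or how the $T$-crossings on a transparent $M$ are pinned down; likewise the alternative route via identifying $\mcC_\opA\boxtimes\overline{\mcC}$ with a Drinfeld centre is asserted, not established. The actual proof in \cite{Mulevicius:2020bat} is a substantial graphical computation and cannot be replaced by these heuristics. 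If you intend to supply an independent proof, step (iv) needs a concrete mechanism, not just an invocation of M\"uger's criterion.
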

Recall from \eqref{eq:T-compatible_iso} and \eqref{eq:rescaled-orbifold} the notion of a $T$-compatible isomorphism and of a rescaled orbifold datum.

\begin{prp}\label{prop:T-com-rescale-equivcat}
Let $\opA$, $\widetilde{\opA}$ and $\opA_\xi$ be orbifold data in $\mcC$, such that $\widetilde{\opA}$ and $\opA$ are related by a $T$-compatible isomorphism, and such that $\opA_\xi$ is a rescaling of $\opA$ for some $\xi \in \opk^\times$.
Then $\mcC_\opA$, $\mcC_{\widetilde{\opA}}$ and $\mcC_{\opA_\xi}$ are equivalent as ribbon fusion categories.
\end{prp}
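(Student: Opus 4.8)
The plan is to prove the two equivalences separately, as they are of quite different character. For the rescaling $\opA_\xi$, the key observation is that $\mcC_{\opA_\xi}$ and $\mcC_\opA$ have literally the same objects: the $T$-crossing conditions in \cite[Sec.\,3.1]{Mulevicius:2020bat} are homogeneous in $\a$ and $\psi$ in such a way that rescaling $\a \mapsto \xi\a$, $\psi \mapsto \xi^{-1/2}\psi$ leaves them invariant once one allows a compensating rescaling $\tau_i^M \mapsto \xi^{c}\tau_i^M$ of the $T$-crossings for the appropriate exponent $c$. First I would go through the list of conditions an object $(M,\tau_1^M,\tau_2^M)$ must satisfy and check that the assignment $(M,\tau_1^M,\tau_2^M) \mapsto (M, \xi^{c_1}\tau_1^M, \xi^{c_2}\tau_2^M)$ (with exponents read off from how many $\a$'s and $\psi$'s appear in each condition) is a well-defined bijection on objects; it is visibly the identity on Hom-spaces of underlying bimodules, hence a functor, and it respects the tensor product over $A$ because the tensor product of $T$-crossings in \cite[Sec.\,3.2]{Mulevicius:2020bat} is again homogeneous. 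Finally I would check it sends the tensor unit $(A,\tau_i^A)$ of $\mcC_\opA$ to the tensor unit of $\mcC_{\opA_\xi}$ and is compatible with the ribbon structure (braiding, twist), which on $\mcC_\opA$ are built from the same data; the ribbon structure on $\mcC_\opA$ is given in \cite{Mulevicius:2020bat} in terms of $\a,\abar,\psi$, so one just tracks the scalars.

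For the $T$-compatible isomorphism $\rho\colon T \to \widetilde T$, the idea is to transport objects along $\rho$. Given $(M,\tau_1^M,\tau_2^M)\in\mcC_\opA$, define $\widetilde\tau_i^M := (\rho \otimes \id_M)\circ\tau_i^M\circ(\id_M \otimes_A \rho^{-1})$, which is an $A$-$A{\otimes}A$-bimodule isomorphism $M\otimes_A \widetilde T \to \widetilde T \otimes_i M$ because $\rho$ is a bimodule map. I would then verify that the $T$-crossing axioms for $(M,\widetilde\tau_i^M)$ with respect to $\widetilde\opA=(A,\widetilde T,\widetilde\a,\widetilde\abar,\psi,\phi)$ follow from those for $(M,\tau_i^M)$ with respect to $\opA$, using exactly the intertwining relation $(\rho\otimes\rho)\circ\a = \widetilde\a\circ(\rho\otimes\rho)$ from \eqref{eq:T-compatible_iso} (and its consequence for $\abar$, $\widetilde\abar$, which follows since $\a,\abar$ are mutually inverse up to $\psi$'s by (O2), (O3) — the same $\psi$ is used for both data). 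This gives a functor $\mcC_\opA \to \mcC_{\widetilde\opA}$, identity on underlying morphisms; its inverse is built from $\rho^{-1}$. Compatibility with tensor product follows because the tensor product of $T$-crossings involves only the bimodule structure and $\a$, all of which $\rho$ intertwines; it sends the unit $(A,\tau_i^A)$ to $(A,\widetilde\tau_i^A)$ since $\tau_i^A$ in \eqref{eq:T-cross_for_A} is built from the $A$-actions, which $\rho$ preserves; and it is ribbon because the braiding and twist of $\mcC_\opA$ in \cite{Mulevicius:2020bat} are again expressed through the bimodule data and $\a,\abar$.

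Composing the two cases, and using that $\widetilde\opA$ and $\opA_\xi$ relate to $\opA$ independently, yields the claim. The main obstacle I anticipate is purely bookkeeping rather than conceptual: one must correctly identify the homogeneity weights of $\a$, $\abar$, $\psi$ and $\phi$ in each of the (roughly half a dozen) $T$-crossing conditions and in the tensor product and ribbon structure maps of \cite{Mulevicius:2020bat}, so that the scalar $\xi^{\pm 1/2}$ cancels everywhere. Getting these exponents consistent — in particular making sure the compensating rescaling of $\tau_i^M$ is itself compatible with the tensor product and the unit — is the step where care is needed; everything else is formal naturality. I would organise the write-up so that the $T$-compatible case is done first (it is cleaner, as no scalars move), and then treat the rescaling, possibly remarking that one could alternatively absorb the rescaling into a ``twisted'' reinterpretation of $\mcC_\opA$ to avoid chasing exponents.
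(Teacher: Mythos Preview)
Your proposal is correct and follows essentially the same route as the paper: transport $T$-crossings along $\rho$ via $\widetilde\tau_i^M = (\rho\otimes\id_M)\circ\tau_i^M\circ(\id_M\otimes_A\rho^{-1})$ for the $T$-compatible case, and rescale $\tau_i^M$ for the $\opA_\xi$ case, both acting as the identity on morphisms. The only point the paper resolves that you leave open is the bookkeeping you flag as the main obstacle: the correct exponents are simply $c_1=c_2=1$, i.e.\ $(M,\tau_1^M,\tau_2^M)\mapsto(M,\xi\tau_1^M,\xi\tau_2^M)$, and with this choice the verification that the $T$-crossing conditions, tensor product, braiding and twist are preserved is immediate.
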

\begin{proof}
Given a $T$-compatible isomorphism $\rho:T \ra \widetilde{T}$,
define the functor $F:\mcC_\opA \ra \mcC_{\widetilde{\opA}}$ as $(M, \tau_1^M, \tau_2^M) \mapsto (M, \widetilde{\tau}_1^M, \widetilde{\tau}_1^M)$, where
\begin{equation}
\widetilde{\tau}_i^M := [M \otimes_A \widetilde{T} \xra{\id_M\otimes \rho^{-1}} M \otimes_A T \xra{\tau_i} T \otimes_i M \xra{\rho \otimes \id_M} \widetilde{T} \otimes_i M], \quad
i=1,2 ~.
\end{equation}
On morphisms, $F$ acts as the identity.
One checks that $\widetilde{\tau}_i$, $i=1,2$ are indeed $\widetilde{T}$-crossings.
For $M,N\in\mcC_\opA$ the objects $F(M \otimes N)$ and $F(M)\otimes F(N)$ are equal, giving $F$ a natural monoidal structure, which preserves braidings and twists.

Similarly the functor $\mcC_\opA \ra \mcC_{\opA_\xi}$, $(M, \tau_1^M, \tau_2^M) \mapsto (M, \xi\tau_1^M, \xi\tau_2^M)$ gives a ribbon equivalence between $\mcC_\opA$ and $\mcC_{\opA_\xi}$.
\end{proof}

Given two objects $M,N\in\mcC_\opA$ and a morphism $f:M\ra N$ of the underlying bimodules, let us define the \textit{average} of $f$ to be
\begin{equation}
\overline{f} := \phi^4 \pic[2.0]{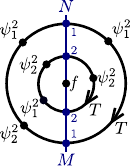} ~.
\end{equation}
Here, $\psi_1,\psi_2:T \ra T$ are morphisms obtained by composing $[\opid_\mcC \xra{\eta} A \xra{\psi} A]$ with the appropriate right action of $A$ on $T$ (where $\eta:\opid_\mcC\ra A$ is the unit of $A$; see \cite[Def.\,2.2]{Mulevicius:2020bat} for details on this notation).
The index at a crossing of a bimodule and a $T$-line refers to $\tau_i$ or $\overline \tau_i$, depending on the direction of the crossing.

\medskip

The average $\overline{f}$ of $f$ is a morphism in $\mcC_\opA$, see \cite[Sec.\,3.4]{Mulevicius:2020bat}.
In fact, the map $f \mapsto \overline{f}$ is an idempotent projecting onto the subspace $\mcC_\opA(M,N) \subseteq \mcACA(M,N)$.
Thus, $\opA$ is simple if and only if the image of the averaging map is one-dimensional when applied to bimodule morphisms $A\ra A$.

\begin{prp}\label{prop:A-simple}
Suppose that $\mathrm{char}(\opk) = 0$.
Under the assumptions \eqref{eq:assumption1}--\eqref{eq:assumption3} we have
\begin{equation}
\label{eq:dimCAAA}
\dim \mcC_\opA(A,A)
~=~
\phi^4 
\hspace{-.5em}
\sum_{a,b,d,p\in B} 
\hspace{-.5em}
\psi_b^2  \, \psi_d^2 \, \dim {}_a t_{pd} \, \dim {}_p t_{ba} ~.
\end{equation}
In particular, $\opA$ is simple if and only if the right hand side yields 1.
\end{prp}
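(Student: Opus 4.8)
The plan is to compute the dimension of the image of the averaging idempotent $f \mapsto \overline{f}$ applied to bimodule endomorphisms of the regular bimodule $A$, and to express the resulting trace in terms of the scalars of Proposition~\ref{prop:orb-polys}. Since the averaging map is an idempotent projecting $\mcACA(A,A)$ onto $\mcC_\opA(A,A)$, and we are in characteristic zero so that rank equals trace for idempotents, we have $\dim\mcC_\opA(A,A) = \tr_{\mcACA(A,A)}\big(f \mapsto \overline f\big)$. Under assumptions~\eqref{eq:assumption1}--\eqref{eq:assumption3} the algebra $A = \bigoplus_{a\in B}\opid_a$ has $\mcACA(A,A)$ spanned by the idempotents $\id_{\opid_a}$, $a\in B$, so the trace decomposes as a sum over $a\in B$ of the diagonal matrix elements of the averaging operator in this basis.

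First I would unfold the graphical definition of $\overline{f}$ for $f = \id_{\opid_a}$: the diagram in \pic[2.0]{3_f_average.pdf} involves the $T$-crossings $\tau_i^A$, $\overline\tau_i^A$ of \eqref{eq:T-cross_for_A} for the tensor unit, two insertions of $\psi_1,\psi_2$ on the $T$-lines (contributing the $\psi$-scalars), and an overall factor $\phi^4$. Because the $T$-crossings for $A$ are built from the Frobenius structure maps and the bimodule action, the closed diagram resulting from averaging $\id_{\opid_a}$ reduces — after using \eqref{eq:assumption2}, \eqref{eq:assumption3} and the decomposition $T = \bigoplus {}_a t_{bc}$ — to a sum over the internal $B$-labels of products of quantum dimensions $\dim {}_a t_{pd}$ and $\psi$-scalars. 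The specific combination of summed indices $b,d,p\in B$ and the appearance of $\psi_b^2\psi_d^2\,\dim {}_at_{pd}\,\dim{}_pt_{ba}$ should fall out of tracking which $t$-components appear on the two $T$-loops in the averaged diagram and which $A$-summands propagate through the (co)multiplication vertices; the two factors of $\psi^2$ come from the two $\psi_i$ insertions landing on $T$-legs attached to the copies labelled $b$ and $d$.

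The main obstacle will be bookkeeping the graphical reduction carefully: one must verify that the $T$-crossings $\tau_i^A$ collapse as expected, that the $\overline\tau_i^A$ pseudo-inverses combine with $\tau_i^A$ to give the identity (up to the $\psi$'s already accounted for, cf.\ the remark after \eqref{eq:T-cross_for_A}), and that no residual $f$- or $g$-scalars survive — i.e.\ that for the regular bimodule the answer depends only on dimensions and $\psi$'s and not on the associator data. Concretely I expect to use the ribbon structure to turn the averaged diagram into a disjoint union of two circles carrying $T$-components, each circle evaluating to $\sum \dim {}_a t_{\bullet\bullet}$, weighted by the $\psi$-factors and $\phi^4$; matching the index pattern to \eqref{eq:dimCAAA} is then a matter of reading off the labels. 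Once \eqref{eq:dimCAAA} is established, the final assertion is immediate: $\opA$ is simple iff $\dim\mcC_\opA(A,A)=1$ by definition, which by the formula is exactly the condition that the right-hand side equals $1$.
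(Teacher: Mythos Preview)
Your strategy matches the paper's exactly: compute $\dim\mcC_\opA(A,A)$ as the trace of the averaging idempotent on $\mcACA(A,A)$, use the basis $\{\id_{\opid_a}\}_{a\in B}$, and then read off the formula from the decomposition of $T$. The paper also confirms your expectation that no $f$- or $g$-scalars survive, because the $T$-crossings \eqref{eq:T-cross_for_A} for the regular bimodule are built solely from the Frobenius structure and the $T$-actions, not from $\alpha,\bar\alpha$.

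Two small points where your anticipated mechanics diverge from what actually happens. First, the simplification is not that $\tau_i^A$ and $\overline\tau_i^A$ ``combine to give the identity'': in the averaging diagram they sit on opposite sides of $f$ and do not cancel. Rather, after inserting their explicit form one gets (this is the paper's \eqref{eq:average_comp}) a nested configuration in which the $A$-line labelled $c$ threads through both $T$-strands; the two $T$-loops are linked via the index $p$ (one carries ${}_at_{pd}$, the other ${}_pt_{bc}$), not disjoint. Second, the off-diagonal matrix element of the averaging map, row $a$ column $c$, carries $\psi_b^2\psi_c^2\psi_d^2/\psi_a^2$ rather than just $\psi_b^2\psi_d^2$; the extra $\psi_c^2/\psi_a^2$ comes from the $\psi_0^{\pm1}$-insertions in $\tau_i^A,\overline\tau_i^A$ and only cancels on the diagonal $a=c$ when you take the trace. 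Neither point is a gap --- the computation still lands on \eqref{eq:dimCAAA} --- but if you carry out the graphical reduction with the wrong mental picture you may have trouble matching the index pattern.
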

\begin{proof}
Using the $T$-crossings for $A$ as in \eqref{eq:T-cross_for_A} one has:
\begin{equation}
\label{eq:average_comp}
\pic[2.0]{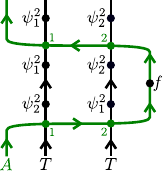} =
\pic[2.0]{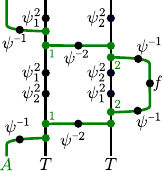} = 
\pic[2.0]{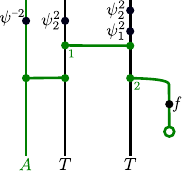}.
\end{equation}
The assumptions \eqref{eq:assumption1}--\eqref{eq:assumption3} allow us to write a morphism $f:A\ra A$ as $f = \bigoplus_{a\in B} f_a \id_{\opid_a}$ for some scalars $f_a$.
Substituting also $A = \bigoplus_{a\in B} \opid_a$ and $T = \bigoplus_{a,b,c\in B} {}_a t_{bc}$ ~, the right hand side of \eqref{eq:average_comp} becomes
\begin{equation}
\bigoplus_{a,b,c,d,p\in B} \frac{\psi_b^2 \psi_c^2 \psi_d^2}{\psi_a^2} f_c \cdot \id_{\opid_a} \otimes \id_{{}_a t_{pd}} \otimes \id_{{}_p t_{bc}}
~.
\end{equation}
Multiplying with $\phi^4$ and taking the trace over $T\otimes T$ yields
\begin{equation}
\overline{f} =
\bigoplus_{a\in B} \left(
\sum_{c\in B} f_c \cdot \left[ 
\phi^4
\sum_{b,d,p\in B} \frac{\psi_b^2 \psi_c^2 \psi_d^2}{\psi_a^2} \cdot 
\dim {}_a t_{pd} \cdot \dim {}_p t_{bc}
\right]
\right)
\id_{\opid_a} .
\end{equation}
In the basis $\{ \id_{\opid_a} \}_{a\in B}$ of $\mcACA(A,A)$, the numbers in the square brackets describe precisely the matrix elements of the averaging map, indexed by $a,c \in B$.
Taking the trace of this matrix then yields exactly the right hand side of \eqref{eq:dimCAAA}.
\end{proof}

Next we turn to the global dimension of $\mcC_\opA$.
The global dimension of a spherical fusion category is the sum of the squared dimensions of its simple objects.
Given a simple orbifold datum $\opA$ in a modular fusion category $\mcC$, it was shown in \cite[Thm.\,3.17]{Mulevicius:2020bat} that the global dimension of $\mcC_\opA$ is
\begin{equation}
    \Dim\mcC_\opA \,=\, \frac{\Dim\mcC}{\phi^8 \cdot (\tr_\mcC \psi^4)^2}
    ~\overset{(*)}=~
    \frac{\sum_{i\in I} (\dim i)^2}{
    \phi^8 \cdot \big( \sum_{b \in B} \psi_b^4\big)^2}
    ~,
\end{equation}
where in $(*)$ we used assumptions \eqref{eq:assumption1}--\eqref{eq:assumption3} and substituted the scalars in Proposition~\ref{prop:orb-polys}.

\medskip

Finally, we turn to computing the number of (isomorphism classes of) simple objects in $\mcC_\opA$. 
We will do this with the help of
Reshetikhin-Turaev TQFT:

\begin{lem}\label{lem:T3-gives-number}
Suppose that $\mathrm{char}(\opk) = 0$.
The invariant of the three-torus $T^3=S^1 \times S^1 \times S^1$ in the Reshetikhin-Turaev TQFT $Z_\mcC$ for a modular fusion category $\mcC$ is equal to the number of isomorphism classes of simple objects in $\mcC$:
\begin{equation}
    Z_\mcC(T^3) ~=~ |I| \ .
\end{equation}
\end{lem}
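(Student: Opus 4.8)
The plan is to compute $Z_\mcC(T^3)$ by presenting the three-torus as a mapping torus, or more simply by a Heegaard-type decomposition, and invoking the standard axioms of a modular functor. Recall that Reshetikhin-Turaev TQFT assigns to a closed oriented surface $\Sigma$ a finite-dimensional vector space $Z_\mcC(\Sigma)$, and that $Z_\mcC(\Sigma \times S^1) = \dim_\opk Z_\mcC(\Sigma)$; this is the general principle that the partition function on $X \times S^1$ equals the dimension of the state space on $X$, which follows from writing $X \times S^1$ as the trace of the identity cobordism $X \times [0,1]$. Applying this with $\Sigma = T^2 = S^1 \times S^1$ gives
\begin{equation}
Z_\mcC(T^3) ~=~ Z_\mcC(T^2 \times S^1) ~=~ \dim_\opk Z_\mcC(T^2) \ .
\end{equation}
So the statement reduces to the well-known fact that the RT vector space associated to the torus has dimension equal to the number of isomorphism classes of simple objects.

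First I would recall why $\dim_\opk Z_\mcC(T^2) = |I|$. The torus bounds a solid torus $D^2 \times S^1$, and for each simple object $i \in I$ one gets a vector $v_i \in Z_\mcC(T^2)$ by evaluating the TQFT on the solid torus with a circle labelled by $i$ inserted along its core. The standard argument (see Turaev, or Bakalov-Kirillov) shows that $\{v_i\}_{i\in I}$ is a basis of $Z_\mcC(T^2)$: they span because any labelled solid torus can be decomposed via fusion into a sum of these, and they are linearly independent precisely because $\mcC$ is modular — the $S$-matrix pairing $\langle v_i, v_j \rangle = S_{ij}$ (computed by gluing two solid tori along their boundary to form $S^2 \times S^1$ with a Hopf link) is non-degenerate by definition of a modular fusion category. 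Hence $\dim_\opk Z_\mcC(T^2) = |I|$, and combining with the mapping-torus computation above finishes the proof.

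The main obstacle — really the only subtlety — is handling the framing anomaly of the RT TQFT: strictly speaking $Z_\mcC$ is only a projective (or $p_1$-framed) TQFT, so one must be slightly careful that the anomaly factors cancel. For the mapping torus $\Sigma \times S^1$ of the identity mapping class this is automatic: the identity diffeomorphism lifts canonically, the anomaly contribution is trivial, and $Z_\mcC(\Sigma \times S^1) = \dim Z_\mcC(\Sigma)$ holds on the nose. I would either cite this standard fact (e.g.\ from Turaev's book or from Reshetikhin-Turaev) or remark that one may equally work with the anomaly-free modular functor obtained by the usual $p_1$-structure, since all we use is the numerical identity. Thus the proof is essentially a one-line reduction plus a citation, and I would present it as such: write $T^3 = T^2 \times S^1$, use the trace/dimension axiom, and invoke $\dim Z_\mcC(T^2) = |I|$ from modularity of $\mcC$.
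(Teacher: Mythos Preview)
Your argument is correct and is essentially the same as the paper's: both write $T^3 = T^2 \times S^1$, use that $Z_\mcC(\Sigma \times S^1) = \dim_\opk Z_\mcC(\Sigma)$ via the trace of the identity cobordism, and then invoke $\dim_\opk Z_\mcC(T^2) = |I|$. The only cosmetic difference is that the paper identifies $Z_\mcC(T^2)$ directly as $\mcC(\opid,\bigoplus_{i\in I} i^*\otimes i)$ and cites Turaev, whereas you spell out the solid-torus basis and the $S$-matrix nondegeneracy; these are two standard ways of saying the same thing.
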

\begin{proof}
This is immediate from writing $Z_\mcC(T^3)$ as the trace over $Z_\mcC(T^2 \times [0,1])$. The trace computes the dimension of $Z_\mcC(T^2) = \mcC(\opid,\bigoplus_{i \in I} i^* \otimes i)$, which in turn gives the number of simple objects, see \cite[Sec.\,IV.1.4]{Tu}.
\end{proof}

Next we use the equivalence of the TQFT for $\mcC_\opA$ and the orbifold TQFT of $\mcC$ by $\opA$ \cite{Carqueville:2020}:
\begin{thm}
For a modular fusion category $\mcC$ and a simple orbifold datum $\opA$ in $\mcC$, the TQFTs $Z_{\mcC_\opA}$ and $Z^\mathrm{orb,\opA}_\mcC$ are equivalent. In particular, for each closed oriented three-manifold $M$, we have
\begin{equation}
    Z_{\mcC_\opA}(M) ~=~ Z^\mathrm{orb,\opA}_\mcC(M) \ .
\end{equation}
\end{thm}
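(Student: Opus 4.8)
The plan is to exhibit both $Z^\mathrm{orb,\opA}_\mcC$ and $Z_{\mcC_\opA}$ as Reshetikhin--Turaev TQFTs and to identify their underlying modular fusion categories, using that an RT TQFT is determined up to equivalence by that category; the general machinery for this is developed in \cite{Carqueville:2020}. First recall the two constructions. The orbifold TQFT $Z^\mathrm{orb,\opA}_\mcC$ is built from $Z_\mcC$ by the internal state-sum prescription of \cite{CRS1,CRS3,Carqueville:2020}: on a (decorated) closed oriented $3$-manifold $M$ one picks a stratification dual to a triangulation, labels $2$-strata by $A$, $1$-strata by $T$ and $0$-strata by $\a$ or $\abar$, and evaluates the resulting $\mcC$-ribbon graph in $M$ with $Z_\mcC$; the conditions \eqref{O1}--\eqref{O8} are precisely what makes this independent of the chosen stratification, since they encode the Pachner-type moves relating any two, together with the bookkeeping of the normalisation scalars $\psi_a$ and $\phi$. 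On the other side $Z_{\mcC_\opA}$ is the RT TQFT of $\mcC_\opA$, which is a modular fusion category by Theorem~\ref{thm:CA-is-modular} since $\opA$ is simple.

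The heart of the argument is to produce a ribbon equivalence between $\mcC_\opA$ and the category of line defects of the orbifold theory. Given $M\in\mcC_\opA$ --- an $A$-$A$-bimodule carrying $T$-crossings $\tau^M_1,\tau^M_2$ --- one obtains such a defect by running the $M$-line parallel to the $2$-strata of a foam and resolving each transversal intersection of the $M$-line with a $T$-stratum by $\tau^M_i$ (or its pseudo-inverse); the $T$-crossing axioms of \cite[Sec.\,3.1]{Mulevicius:2020bat} guarantee well-definedness. One then checks that the monoidal product (fusion over $A$), the braiding and the twist of $\mcC_\opA$ correspond under this assignment to juxtaposition, over/under-crossing and framing change of line defects --- this is where $\a,\abar,\psi,\phi$ enter, essentially by design. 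Equivalently, and more efficiently for the comparison of invariants, one fixes a stratification adapted to a surgery presentation $L\subset S^3$ of $M$ and checks by a purely local computation that summing the $\mcC$-labels along the standardised foam pieces reproduces the Reshetikhin--Turaev weighted sum over $\mcC_\opA$-colourings of $L$: a $\mcC_\opA$-colouring of $L$ \emph{is} a choice of $\mcC$-data on the corresponding foam. Applied to a presentation of an arbitrary closed $M$ this yields $Z^\mathrm{orb,\opA}_\mcC(M) = Z_{\mcC_\opA}(M)$, and running the same dictionary for cobordisms with boundary --- where both theories assign to a surface $\Sigma$ the space of admissible $\mcC_\opA$-colourings of a spine of $\Sigma$ --- upgrades this to the asserted equivalence of TQFTs, naturally in $\Sigma$ and in the cobordism.

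The main obstacle is twofold. First one must know that $Z^\mathrm{orb,\opA}_\mcC$ is of Reshetikhin--Turaev type at all: finite-dimensional state spaces, invariance under Kirby moves and non-degeneracy, and --- crucially --- that the category it assigns to $S^1$ is a \emph{modular} fusion category rather than merely some ribbon category. This is exactly what Theorem~\ref{thm:CA-is-modular} provides, once the identification of that category with $\mcC_\opA$ has been made, so the two parts of the argument are intertwined. Second, and more a matter of care than of ideas, one must track all scalar normalisations --- the powers of $\psi_b$, the powers of $\phi$ coming from the averaging idempotent, and the anomaly/framing-correction factors --- so that they cancel in exactly the right way on both sides. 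A useful consistency check is the global-dimension identity $\Dim\mcC_\opA = \Dim\mcC/\big(\phi^8\,(\tr_\mcC\psi^4)^2\big)$ recalled above, which must reconcile $Z_{\mcC_\opA}(S^3)=(\Dim\mcC_\opA)^{-1/2}$ with the $\opA$-decorated evaluation of a foam in $S^3$; combined with $Z_\mcC(T^3)=|I|$ from Lemma~\ref{lem:T3-gives-number}, the same identification also yields the count of simple objects of $\mcC_\opA$ used in the sequel.
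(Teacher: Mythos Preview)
The paper does not prove this theorem at all: it is stated without proof and attributed to \cite{Carqueville:2020} (listed as ``in preparation''), so there is no argument in the paper to compare your proposal against. Your write-up is not a proof either --- it is a plausible high-level outline that, like the paper, ultimately defers the substance to \cite{Carqueville:2020}; in that sense you and the paper are doing the same thing, namely quoting the result.

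That said, as an outline your sketch is reasonable but somewhat impressionistic in places. The claim that ``a $\mcC_\opA$-colouring of $L$ \emph{is} a choice of $\mcC$-data on the corresponding foam'' is the crux and is doing essentially all the work; turning that slogan into an honest equality of invariants requires precisely the local computations and normalisation bookkeeping you allude to, and those are not short. Also, your logical structure is slightly circular as written: you invoke Theorem~\ref{thm:CA-is-modular} to know the orbifold line-defect category is modular, but that theorem is about $\mcC_\opA$, so you need the identification of categories \emph{before} you can conclude the orbifold theory is of RT type --- you acknowledge this (``the two parts of the argument are intertwined''), but in an actual proof one has to untangle them, typically by first establishing the ribbon equivalence of categories independently and only then appealing to the RT reconstruction.
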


We will not review the construction of the orbifold TQFT $Z^\mathrm{orb,\opA}_\mcC$ and instead refer to \cite{CRS3} for details. The following lemma relies heavily on the methods in \cite{CRS2,CRS3} and gives a complicated but explicit expression for the invariant assigned to $T^3$ by the orbifold TQFT.

\begin{lem}
\label{lem:T3_invariant}
For a modular fusion category $\mcC$ and a simple orbifold datum $\opA$ satisfying assumptions \eqref{eq:assumption1}--\eqref{eq:assumption3} from Section~\ref{sec:poly}, 
we have
\begin{align} \nonumber
Z^\mathrm{orb,\opA}_\mcC & (T^3) =\\ \nonumber
&\phi^2 \cdot \sum_{\substack{a,b,c,d,e,f,g\in B \\ r,s,t,u,v,w\in I}}
\frac{\psi_a^2 \psi_c^2 \psi_e^2 \psi_f^2}{\psi_b^2 \psi_d^2 \psi_g^2} ~
f^{e, ~r}_{afc, ~bg} ~ f^{e, ~s}_{caf, ~gd} ~ f^{e, ~u}_{fca, ~db} ~
g^{e, ~t}_{acf, ~db} ~ g^{e, ~v}_{fac, ~gd} ~ g^{e, ~w}_{cfa, ~bg}\\ \nonumber
& \hspace{10pt} \cdot
\sum_{x,y,z,k,l,m\in I}
L^{bcf, ~wt}_{eba, ~ur} \big|_x ~
L^{dca, ~us}_{efd, ~vt} \big|_y ~
L^{gaf, ~sr}_{ecg, ~wv} \big|_k \\ \nonumber
& \hspace{65pt} \cdot
\frac{\dim z}{\dim l} ~\frac{\dim v}{\dim k} ~
F^{(r \, z \, y)v}_{k \, u} ~
F^{(s \, k \, x)u}_{l \, w} ~
G^{(t \, x \, z)u}_{r \, m} ~
G^{(s \, y \, m)u}_{t \, l}\\ \label{eq:orb-simple}
& \hspace{65pt} \cdot
T_{x y z, ~ k l m} ~,
\end{align}
where
\begin{align*}
& L^{bcf, ~wt}_{eda, ~ur}\big|_x =
N^{w}_{{}_e t_{da} ~ {}_b t_{cf}} ~
N^{t}_{{}_e t_{ad} ~ {}_b t_{cf}} ~
\frac{\dim x}{\dim {}_b t_{fc}} ~
F^{({}_e t_{ad} ~ {}_b t_{cf} ~ x)r}_{{}_b t_{fc} ~ t} ~
G^{({}_e t_{da} ~ {}_b t_{cf} ~ x)u}_{w ~ {}_b t_{fc}} ~,\\
& T_{xyz, ~ klm} =
\sum_{p,j\in I}
\frac{\dim j}{\dim p} ~
\frac{\theta_j}{ \theta_p \theta_x} ~
R^{-(z \, x)m} ~
G^{(p \, z \, y)p}_{p \, k} ~
G^{(p \, k \, x)j}_{p \, l} ~
F^{(p \, y \, m)j}_{l \, p} ~
F^{(p \, z \, x)j}_{m \, p} ,
\end{align*}
and the expression for the $L$-symbol holds when ${}_b t_{fc} \neq 0$ and is set to zero otherwise.
\end{lem}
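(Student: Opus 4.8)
The plan is to compute $Z^{\mathrm{orb},\opA}_\mcC(T^3)$ by implementing the state-sum construction of the orbifold TQFT from \cite{CRS3} on an explicit stratified triangulation of $T^3$, and then translate every local building block into the scalars $f,g,\psi,\phi$ of Proposition~\ref{prop:orb-polys} using assumptions \eqref{eq:assumption1}--\eqref{eq:assumption3}. First I would recall that $Z^{\mathrm{orb},\opA}_\mcC$ is defined by choosing a stratification of the manifold dual to a triangulation: $3$-strata carry $A$, $2$-strata carry $T$, and $1$-strata carry the morphisms $\alpha$ or $\abar$, with the Reshetikhin-Turaev invariant $Z_\mcC$ evaluated on the resulting defect network. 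So the computation splits into two parts: (i) fix a convenient triangulation of $T^3$ with as few simplices as possible, and (ii) evaluate the associated $\mcC$-coloured graph.

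For part (i) I would use the standard minimal triangulation of $T^3$ (for instance the one with a single vertex coming from identifying faces of a cube, or the $6$-tetrahedron triangulation), so that the dual stratification has a small number of $2$- and $1$-strata. Since the $3$-strata are coloured by $A=\bigoplus_{a\in B}\opid$, each $3$-stratum gets an index in $B$ (these are the summation labels $a,b,c,d,e,f,g\in B$), and each $2$-stratum separating $3$-strata labelled by a triple of $B$-elements is coloured by the corresponding summand ${}_at_{bc}$ of $T$ — forcing the fusion-rule factors $N$ and the constraint that the sum runs only over labellings with nonzero ${}_at_{bc}$. Each $1$-stratum, where several $2$-strata meet, contributes an $\alpha$ or $\abar$, i.e.\ an $f$ or $g$ scalar (with the $\psi$-rescalings \eqref{eq:alpha_ito_f}--\eqref{eq:abar_ito_g} producing the ratios $\psi_a^2\psi_c^2/(\psi_b^2\psi_d^2\psi_g^2)$ and similar), and the remaining internal edges carry intermediate simple objects $r,s,t,u,v,w,x,y,z,k,l,m\in I$ that must be summed over. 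The normalisation conventions of \cite{CRS2,CRS3} contribute the overall $\phi^2$ prefactor (one power of $\phi^{\pm}$ per vertex/tetrahedron, with the global exponent fixed by the Euler-characteristic bookkeeping of $T^3$).

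For part (ii) I would evaluate the $\mcC$-coloured graph by repeatedly applying the defining relations for the $F$-, $G$- and $R$-matrices together with the $L$-symbol, which is the composite that arises when a $T$-line passes through a trivalent $\mcC$-vertex sitting on a $2$-stratum; reading off the definition given in the statement, $L^{bcf,wt}_{eda,ur}|_x$ is exactly such a ``slide a strand past a defect junction'' move, hence the explicit product of two $N$'s, a dimension ratio, an $F$ and a $G$. The $T_{xyz,klm}$ factor is likewise the contribution of the single $1$-cycle in $T^3$ wrapping the ``time'' direction — a closed $\mcC$-coloured loop summed over internal colours $p,j$, which is why it carries the twist ratio $\theta_j/(\theta_p\theta_x)$, an inverse $R$-matrix, and a string of $F$'s and $G$'s with $p$ appearing in every slot (the loop is threaded by the object labelling the corresponding $2$-torus cross-section). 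Collecting all local factors and matching indices across shared strata produces precisely the quintuple-and-more sum in \eqref{eq:orb-simple}; the $\dim$-ratios and the particular $F,G$ entries $F^{(rzy)v}_{ku}$, $F^{(skx)u}_{lw}$, $G^{(txz)u}_{rm}$, $G^{(sym)u}_{tl}$ track how the six internal $1$-strata of the dual graph are glued.

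\textbf{The hard part} will be the combinatorial bookkeeping of part (i): choosing the triangulation so that the dual defect network is small enough to be written down explicitly, and then keeping track of \emph{all} orientation and ordering conventions — which $2$-strata are ``incoming'' versus ``outgoing'' at each $1$-stratum (this decides $f$ versus $g$, and which $R$ versus $R^{-1}$ appears), and how the planar-graph conventions of \cite{FRS1,CRS3} assign $F$ versus $G$ at each junction. Getting a single sign or an $F\leftrightarrow G$ wrong anywhere propagates into the final formula, so the real labour is a careful, convention-consistent audit rather than any deep idea; once the correct local data are in place, the identity \eqref{eq:orb-simple} is just the assembled product. Because this is ``tedious but straightforward'' in exactly the sense of Proposition~\ref{prop:orb-polys}, I would relegate the full index-chase to an appendix and present here only the triangulation picture, the dictionary (strata $\to$ $f,g,\psi,\phi,F,G,R,L,T$), and the final collection of factors.
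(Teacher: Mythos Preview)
Your overall plan matches the paper's approach: fix a stratification of $T^3$, decompose the orbifold data into simple summands via \eqref{eq:assumption1}--\eqref{eq:assumption3}, and evaluate the resulting $\mcC$-coloured ribbon graph in $T^3$ using $F,G,R$-moves. The paper does exactly this in Appendix~\ref{app:T3_invariant}, choosing an explicit skeleton drawn as a cube with opposite faces identified rather than the dual of a triangulation, but the strategy is the same, and the paper likewise isolates the $L$-symbol as a local loop-removal identity and the residual $T_{xyz,klm}$ as a purely $\mcC$-coloured graph in $T^3$.

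There is, however, a genuine error in your description of the labelling. You write that $3$-strata carry $A$, $2$-strata carry $T$, and $1$-strata carry $\alpha,\abar$; the correct assignment (see the introduction and \cite{CRS3}) is one dimension lower: $2$-strata are labelled by $A$, $1$-strata by $T$, and $0$-strata by $\alpha$ or $\abar$. Correspondingly, the seven $B$-labels $a,b,\dots,g$ index the $2$-strata of the skeleton (each a summand $\opid_a$ of $A$), the simple summands ${}_at_{bc}$ of $T$ sit on the $1$-strata where three surfaces meet, and each $f$- or $g$-scalar is the contribution of a single $0$-stratum. This shift does not invalidate the strategy, but if you actually tried to count strata in a $6$-tetrahedron triangulation with your assignment you would not reproduce the index pattern of \eqref{eq:orb-simple}; in particular, the six $f,g$ factors and the $\psi$-ratios come from six vertices and their adjacent surfaces, not from edges.

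One further point worth flagging: $T_{xyz,klm}$ is not a closed string diagram in $\mcC$ that can be evaluated by local $F,G,R$ moves alone --- it is the Reshetikhin--Turaev invariant of a ribbon graph still embedded in $T^3$. The paper computes it by cutting $T^3$ open twice: first as a trace over the cylinder on a once-punctured torus (this is where the sum over $p\in I$ enters, via the basis $\bigoplus_q\mcC(q,q\otimes z)$ of the state space), and then as a trace over $S^2\times[0,1]$ to reduce to a genuine planar string diagram $\Gamma$ in $\mcC$. Your heuristic that $p$ ``threads the $2$-torus cross-section'' is correct in spirit, but the derivation requires this TQFT gluing argument rather than purely local graphical calculus.
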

The proof is a rather technical computation which we present in Appendix \ref{app:T3_invariant}
Combining the above results, we arrive at the following statement.

\begin{prp}\label{prop:count-simple}
Suppose that $\mathrm{char}(\opk) = 0$.
For modular fusion category $\mcC$ and a simple orbifold datum $\opA$ satisfying assumptions \eqref{eq:assumption1}--\eqref{eq:assumption3} from Section~\ref{sec:poly}, the number of isomorphism classes of simple objects in $\mcC_\opA$ is given by \eqref{eq:orb-simple}.
\end{prp}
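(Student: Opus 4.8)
The plan is to assemble the proof from the pieces already laid out in this section. The statement asserts that the right-hand side of \eqref{eq:orb-simple} equals the number of isomorphism classes of simple objects in $\mcC_\opA$, so the natural strategy is the chain
\begin{equation}
|\,\Irr(\mcC_\opA)\,| ~=~ Z_{\mcC_\opA}(T^3) ~=~ Z^{\mathrm{orb},\opA}_\mcC(T^3) ~=~ \text{(RHS of \eqref{eq:orb-simple})} ~.
\end{equation}
Each equality is supplied by a result stated above. First I would invoke Lemma~\ref{lem:T3-gives-number} (valid since $\mathrm{char}(\opk)=0$, and since $\mcC_\opA$ is a genuine modular fusion category by Theorem~\ref{thm:CA-is-modular}, using that $\opA$ is simple) to identify the number of simple objects of $\mcC_\opA$ with the TQFT invariant $Z_{\mcC_\opA}(T^3)$. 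Next I would apply the equivalence $Z_{\mcC_\opA} \cong Z^{\mathrm{orb},\opA}_\mcC$ from the theorem quoted just before Lemma~\ref{lem:T3_invariant}, specialised to the closed oriented three-manifold $M = T^3$, to rewrite this as $Z^{\mathrm{orb},\opA}_\mcC(T^3)$. Finally I would quote Lemma~\ref{lem:T3_invariant}, whose hypotheses (modular $\mcC$, simple $\opA$ satisfying \eqref{eq:assumption1}--\eqref{eq:assumption3}) are exactly those assumed in the Proposition, to conclude that this invariant equals the explicit sum in \eqref{eq:orb-simple}. Putting the three identities together proves the claim.

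Since every ingredient is already available as a cited result, there is essentially no new mathematical content to produce here; the proof is a one-line concatenation. The only things worth checking explicitly are that the hypotheses match across the three lemmas: that $\mathrm{char}(\opk)=0$ is needed (it appears in both Lemma~\ref{lem:T3-gives-number} and Lemma~\ref{lem:T3_invariant}, and is assumed in the Proposition), that $\opA$ being simple is what lets us regard $\mcC_\opA$ as modular so that Lemma~\ref{lem:T3-gives-number} applies, and that the assumptions \eqref{eq:assumption1}--\eqref{eq:assumption3} are in force so that the explicit formula of Lemma~\ref{lem:T3_invariant} is the right expression. The real work — the combinatorial state-sum computation yielding formula \eqref{eq:orb-simple} — has been deferred to Lemma~\ref{lem:T3_invariant} and its proof in the appendix, so there is no genuine obstacle remaining at the level of the Proposition itself; the ``hard part'' has already been quarantined into that lemma.

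Concretely, the proof reads as follows. \emph{Proof.} By Theorem~\ref{thm:CA-is-modular}, $\mcC_\opA$ is a modular fusion category, so by Lemma~\ref{lem:T3-gives-number} the number of isomorphism classes of its simple objects equals $Z_{\mcC_\opA}(T^3)$. By the equivalence of $Z_{\mcC_\opA}$ with the orbifold TQFT $Z^{\mathrm{orb},\opA}_\mcC$, applied to $M=T^3$, this equals $Z^{\mathrm{orb},\opA}_\mcC(T^3)$. Finally, under the assumptions \eqref{eq:assumption1}--\eqref{eq:assumption3}, Lemma~\ref{lem:T3_invariant} identifies $Z^{\mathrm{orb},\opA}_\mcC(T^3)$ with the right-hand side of \eqref{eq:orb-simple}. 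This proves the Proposition. \hfill$\square$

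If a referee or reader wants more, the only elaboration I would add is a sentence noting that $T^3$ is indeed a closed oriented three-manifold (so the theorem on TQFT equivalence applies verbatim), and a pointer back to \cite{Tu} for the trace-over-$Z_\mcC(T^2)$ identity underlying Lemma~\ref{lem:T3-gives-number}; but these are already contained in the statements being cited, so the bare concatenation above is a complete proof.
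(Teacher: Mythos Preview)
Your proposal is correct and matches the paper's approach exactly: the paper simply writes ``Combining the above results, we arrive at the following statement'' before Proposition~\ref{prop:count-simple}, which is precisely the concatenation of Lemma~\ref{lem:T3-gives-number}, the TQFT equivalence theorem, and Lemma~\ref{lem:T3_invariant} that you spell out. One minor inaccuracy: Lemma~\ref{lem:T3_invariant} does not actually carry a $\mathrm{char}(\opk)=0$ hypothesis (it is a pure state-sum computation), so that assumption is needed only for Lemma~\ref{lem:T3-gives-number}; this does not affect the validity of your argument.
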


\begin{rem}
In Proposition~\ref{prop:A-simple}, Lemma~\ref{lem:T3-gives-number} and Proposition~\ref{prop:count-simple} we had to require $\mathrm{char}(\opk)=0$ because we computed the dimension of a vector space as a trace. If $\opk$ has finite characteristic, the corresponding equalities still hold modulo $\mathrm{char}(\opk)$. That is, if $\opA$ is simple, then the right hand side of \eqref{eq:dimCAAA} is 1 modulo $\mathrm{char}(\opk)$ (but the converse no longer holds), and the number of isomorphism classes of simple objects in $\mcC_\opA$ is given by \eqref{eq:orb-simple} modulo $\mathrm{char}(\opk)$.
\end{rem}

\section{Ising-type modular categories}\label{sec:Ising}
For the rest of the paper, we work over the field of complex numbers, $\opk=\opC$.

A braided fusion category of Ising-type is a special case of a Tambara-Yamagami category, where the underlying abelian group is $\mathbb{Z}_2$ \cite{Tambara:1998,Siehler:2001}. Including also the ribbon twist, there are 16 different modular fusion categories of Ising-type \cite[App.\,B]{DGNO}, 
\begin{equation}
\label{eq:I_zeta_epsilon}
    \mcI_{\zeta,\epsilon}
    \quad ,  \qquad \text{where}
    \quad \zeta^8=-1 ~,~~ \epsilon \in \{\pm1 \} \ .
\end{equation}
We will use the abbreviation
\begin{equation}
\label{eq:lambda_ito_zeta}
    \lambda = \zeta^2 + \zeta^{-2} ~,
\end{equation}
which implies that $\lambda^2=2$. 
The category $\mcI_{\zeta,\epsilon}$
has three simple objects, $I=\{\opid, \vareps, \sigma \}$. 
Here, $\opid,\vareps$ form the $\mathbb{Z}_2$-subgroup of the fusion ring, and $\sigma \otimes \sigma \cong \opid \oplus \vareps$.
The quantum dimensions and twist eigenvalues of the simple objects are
\begin{equation}
    \begin{array}{rclrclrcl}
    \dim(\opid)&=&1 \ ,~~ &
    \dim(\vareps)&=&1 \ ,~~ &
    \dim(\sigma)&=&\epsilon\lambda \ , 
    \\
    \theta_\opid &=& 1 \ , &
    \theta_\vareps &=& -1 \ , &
    \theta_\sigma &=& \epsilon \zeta^{-1} \ .
    \end{array}
\end{equation}

The $R$-matrices of $\mcI_{\zeta,\epsilon}$ are given by
\begin{equation}
R^{(\vareps\vareps)\opid} = -1~,~~
  R^{(\sigma\sigma)\opid} =\zeta ~,~~
  R^{(\sigma\sigma)\vareps} =\zeta^{-3} ~,~~
  R^{(\sigma\vareps)\sigma} =   R^{(\vareps\sigma)\sigma} =\zeta^4 \ .
\end{equation}
The $F$-matrices with one internal channel are
\begin{equation}
\begin{array}{l}
    F^{(\vareps\vareps\vareps)\vareps}_{\opid\opid} = 1
    ~,~~
    \\
    F^{(\vareps\vareps\sigma)\sigma}_{\sigma\opid} = 1
    ~,~~
    F^{(\vareps\sigma\vareps)\sigma}_{\sigma\sigma} = -1
    ~,~~
    F^{(\sigma\vareps\vareps)\sigma}_{\opid\sigma} = 1 \ ,
    \\
    F^{(\sigma\sigma\vareps)\vareps}_{\sigma\opid} = 1
    ~,~~
    F^{(\sigma\vareps\sigma)\vareps}_{\sigma\sigma} = -1
    ~,~~
    F^{(\vareps\sigma\sigma)\vareps}_{\opid\sigma} = 1 \ .
\end{array}
\end{equation}
as well as $F^{(ijk)\opid}_{ik}=1$ for $i,j,k \in I$ whenever the $F$-matrix is allowed by fusion, i.e.\ when $\opid$ is a summand of $i \otimes j \otimes k$. Finally, the only $F$-matrix with two internal channels is
\begin{equation}
    F^{(\sigma\sigma\sigma)\sigma}_{\opid\opid}
    =
    F^{(\sigma\sigma\sigma)\sigma}_{\opid\vareps}
    =
    F^{(\sigma\sigma\sigma)\sigma}_{\vareps\opid}
    = 
     \frac{1}{\lambda}
    ~,~~
    F^{(\sigma\sigma\sigma)\sigma}_{\vareps\vareps} =  -\frac{1}{\lambda} 
    \ .
\end{equation}
The $G$-matrix in this case is obtained from the relation $G^{(ijk)l}_{pq} = F^{(kji)l}_{pq}$ (see e.g.~\cite[Eqn.\,(2.61)]{FRS1}).

The global dimension of $\mcI_{\zeta,\epsilon}$ and its anomaly (or Gauss sum, 
see e.g.\ \cite[Sec.\,6.2]{DGNO}) are given by 
\begin{align}
\mathrm{Dim}(\mcI_{\zeta,\epsilon}) &= \sum_{i \in I} \dim(i)^2 = 4
~~,
\nonumber\\
\xi(\mcI_{\zeta,\epsilon}) &= \frac{1}{\sqrt{\mathrm{Dim}(\mcI_{\zeta,\epsilon})}} \sum_{i \in I} \dim(i)^2 \,\theta_i = \epsilon \zeta^{-1} = \exp\!\big(2 \pi i \tfrac{c}{8}\big) \ .
\label{eq:Ising-Dim-anomaly}
\end{align}
The number $c$ is also called topological central charge and is defined mod 8. For $\zeta = \exp(-\pi i \frac18 - 2 \pi i \frac{m}8)$, $m=0,1,\dots,7$ one finds $c = 4 \delta_{\epsilon,-1} + \frac12 + m \ (\text{mod}\,8)$.

\begin{rem}\label{rem:which-zeta}~
\begin{enumerate}
\item
The ``original'' Ising modular category, i.e.\ the representation category of the $c=\frac12$ unitary Virasoro vertex operator algebra, is given by $\epsilon=1$ and $\zeta=\exp(-\pi i \frac18)$  as only for these values we have 
\begin{equation}
    \dim(\sigma)=\sqrt{2}
    \quad  , \quad
    \theta_\sigma = \exp\!\big(2 \pi i \tfrac1{16}\big)
    \ .
\end{equation}
\item 
For $\mcC(sp(4),1)$ which appears in the extension of $\mcC(sl(2),10)$ as in Remark~\ref{rem:E6}
we have \cite[Eq.\,(17.80)]{DiFr}
\begin{equation}
    \dim(\sigma)=\sqrt{2}
    \quad  , \quad
    \theta_\sigma = \exp\!\big(2 \pi i \tfrac5{16}\big) \ ,
\end{equation}
which corresponds to $\epsilon=-1$ and $\zeta=\exp(\pi i \frac38)$. The central charge of the chiral $Sp(4)$ WZW model at level 1 is $c=\frac52$, which agrees with the topological central charge (mod 8). 
\end{enumerate}
\end{rem}

\section{Fibonacci-type solutions inside Ising categories}\label{sec:orbdata}

Here we find all solutions for orbifold data in Ising-type categories for a particular ansatz for $A$ and $T$. 

\medskip

Fix $\zeta$ and $\epsilon$ as in Section~\ref{sec:Ising}. We will work in the modular fusion category $\mcI_{\zeta,\epsilon}$. 
We make the ansatz $B = \{ \varone , \varphi \}$ and
\begin{equation}\label{eq:Fib-sol-1}
    A = \opid_\varone \oplus \opid_\varphi
    ~~,\quad
	{}_at_{bc} = \begin{cases} \opid &;~ \text{either 0 or 2 of $a,b,c$ are $\varphi$}
	\\
	\sigma &;~ \text{all of $a,b,c$ are $\varphi$}
	\\
0 &;~ \text{else}
	\end{cases}
\end{equation}
As mentioned below \eqref{eq:t-ansatz}, this mimics the fusion rules of a Fibonacci category, hence we call the solutions below of Fibonacci type. 

Let $h \in \opC$ satisfy
\begin{equation}\label{eq:Fib-sol-2}
    h^3 = \zeta
    \qquad \text{and} \qquad
    h \text{ is a primitive $48^\text{th}$ root of unity} \ .
\end{equation}
We fix the following values for $f$, $\psi$, $\phi^2$:
\begin{align}
&     \psi^2_\varone \phi^2 = \frac{1}{3 - h^4 - h^{-4}}
     ~~,\quad
    \psi^2_\varphi \phi^2 = -\frac{h^{10} + h^{-10}}{3-h^4 - h^{-4}} \cdot \epsilon \ ,
 \nonumber    \\ &     
 f_{\varphi\varphi\varphi,~\varphi\varphi}^{\varone,~\sigma} = h
     ~~,\quad
     f_{\varphi\varphi\varphi,~\varphi\varphi}^{\varphi,~\vareps} = h^5
     ~~,\quad
     f_{\varphi\varphi\varphi,~\varone\varone}^{\varphi,~\opid} = \frac{1}{h^{12} (h^2 - h^{-2})}
     ~~,\quad
 \nonumber    \\ &     
     f_{\varphi\varphi\varphi,~\varphi\varphi}^{\varphi,~\opid} = -h^{-1}\,f_{\varphi\varphi\varphi,~\varone\varone}^{\varphi,~\opid}
     ~~,\quad
     f_{\varphi\varphi\varphi,~\varone\varphi}^{\varphi,~\opid} \, f_{\varphi\varphi\varphi,~\varphi \varone}^{\varphi,~\opid} = \frac{\lambda}{h} \,f_{\varphi\varphi\varphi,~\varone\varone}^{\varphi,~\opid} \ .
\label{eq:Fib-sol-3}
\end{align}
The value of $\phi^2 \in \opC^\times$ can be chosen arbitrarily and then fixes those of $\psi_\varone^2$, $\psi_\varphi^2$.
Similarly, the value of, say,  $f_{\varphi\varphi\varphi,~\varone\varphi}^{\varphi,~\opid} \in \opC^\times$ is arbitrary, fixing that of $f_{\varphi\varphi\varphi,~\varphi\varone}^{\varphi,~\opid}$.

\begin{thm}\label{thm:main-detail}
Every orbifold datum in $\mcI_{\zeta,\epsilon}$ which 
has $A,T$ as specified in \eqref{eq:Fib-sol-1} and 
satisfies the normalisation condition \eqref{eq:1-normalisation-condition} is given by \eqref{eq:Fib-sol-3} with $h$ subject to \eqref{eq:Fib-sol-2}.
Different choices for $\phi^2, f_{\varphi\varphi\varphi,~\varone\varphi}^{\varphi,~\opid} \in \opC^\times$ 
are related by rescalings and by $T$-compatible isomorphisms.
\end{thm}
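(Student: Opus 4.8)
The plan is to invoke Proposition~\ref{prop:orb-polys}: for the fixed data $A,T$ of \eqref{eq:Fib-sol-1}, giving an orbifold datum $\opA$ in $\mcI_{\zeta,\epsilon}$ is the same as giving scalars $f^{a,i}_{bcd,pq}$, $g^{a,i}_{bcd,pq}$, $\psi_a$, $\phi$ solving \eqref{O1}--\eqref{O8}, so the whole theorem becomes an explicit, if large, computation with these equations. The first step is bookkeeping. With $B=\{\varone,\varphi\}$ and the Fibonacci-type $T$, only finitely many of the objects ${}_at_{bp}\otimes{}_pt_{cd}$ and ${}_at_{qd}\otimes{}_qt_{bc}$ that appear as source and target of $f^a_{bcd}\big|_{pq}$ are non-zero, and for each surviving quadruple $(a,b,c,d)$ one lists the admissible $(p,q)$ and the admissible internal channels $i\in\{\opid,\vareps,\sigma\}$ allowed by the fusion rules of $\mcI_{\zeta,\epsilon}$. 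Since the ansatz \eqref{eq:Fib-sol-1} satisfies the unitality assumption \eqref{eq:assumption4} with distinguished element $\varone$, Lemma~\ref{lem:gauge_transfs} applies: a $T$-compatible isomorphism puts every $f$ with a $\varone$ in the relevant slot equal to $1$, i.e.\ imposes \eqref{eq:1-normalisation-condition}, and leaves as genuine unknowns only the scalars with $a=b=c=d=\varphi$ — namely $f^{\varone,\sigma}_{\varphi\varphi\varphi,\varphi\varphi}$, $f^{\varphi,\vareps}_{\varphi\varphi\varphi,\varphi\varphi}$, $f^{\varphi,\opid}_{\varphi\varphi\varphi,\varphi\varphi}$, $f^{\varphi,\opid}_{\varphi\varphi\varphi,\varone\varone}$, $f^{\varphi,\opid}_{\varphi\varphi\varphi,\varone\varphi}$, $f^{\varphi,\opid}_{\varphi\varphi\varphi,\varphi\varone}$ — together with $\psi_\varone,\psi_\varphi,\phi$ and the residual gauge freedom of \eqref{eq:f_ftilde_rels} not used up by \eqref{eq:1-normalisation-condition}.

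Next I would substitute this ansatz and the explicit $F$- and $R$-matrices of $\mcI_{\zeta,\epsilon}$ from Section~\ref{sec:Ising} into Table~\ref{tab:orb_eqs}. Equations \eqref{O2} and \eqref{O3} determine $g$ from $f$ and impose invertibility of the matrices $f^\varphi_{\varphi\varphi\varphi}$; in particular they force all of the above scalars to be non-zero and yield the relation $f^{\varphi,\opid}_{\varphi\varphi\varphi,\varone\varphi}\,f^{\varphi,\opid}_{\varphi\varphi\varphi,\varphi\varone}=\tfrac{\lambda}{h}\,f^{\varphi,\opid}_{\varphi\varphi\varphi,\varone\varone}$ once the abbreviation $h:=f^{\varone,\sigma}_{\varphi\varphi\varphi,\varphi\varphi}$ is introduced. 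The pentagon equation \eqref{O1}, evaluated on the non-vanishing index configurations, becomes a finite polynomial system in the remaining unknowns; solving it one finds that every remaining $f$ is expressed through $h$ as in \eqref{eq:Fib-sol-3}, that the modular data of $\mcI_{\zeta,\epsilon}$ forces $h^3=\zeta$ (hence $h^{24}=-1$, so $h$ has order $16$ or $48$), and that the order-$16$ cube root of $\zeta$ is excluded by the remaining consistency requirements among \eqref{O1}--\eqref{O7}, leaving exactly the primitive $48^\text{th}$ roots of \eqref{eq:Fib-sol-2}. Finally \eqref{O8} determines $\psi^2_\varone$ and $\psi^2_\varphi$ in terms of $\phi^2$, $h$ and $\epsilon$, giving the stated values of $\psi^2_\varone\phi^2$ and $\psi^2_\varphi\phi^2$; note that only these products are constrained, consistent with the fact that a rescaling \eqref{eq:rescaled-orbifold} acts trivially on $f$ and on each $\psi_a^2\phi^2$.

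To finish, I would check the converse: for each $h$ obeying \eqref{eq:Fib-sol-2}, the scalars \eqref{eq:Fib-sol-3} do solve all of \eqref{O1}--\eqref{O8} (a direct, if lengthy, substitution using the Ising data and $\lambda^2=2$), so an orbifold datum genuinely exists. The freedom is then accounted for exactly as claimed: the arbitrary choice of $\phi^2\in\opC^\times$ is precisely a rescaling \eqref{eq:rescaled-orbifold} (by Proposition~\ref{prop:T-com-rescale-equivcat} this does not change $\mcC_\opA$), and the arbitrary choice of $f^{\varphi,\opid}_{\varphi\varphi\varphi,\varone\varphi}\in\opC^\times$ — subject to the fixed value of its product with $f^{\varphi,\opid}_{\varphi\varphi\varphi,\varphi\varone}$ — is the residual $T$-compatible-isomorphism freedom of \eqref{eq:f_ftilde_rels}. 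Since $h=f^{\varone,\sigma}_{\varphi\varphi\varphi,\varphi\varphi}$ is invariant under all transformations \eqref{eq:f_ftilde_rels}, it is an invariant of the orbifold datum, so distinct $h$ really do give distinct data up to these moves.

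The main obstacle I expect is the pentagon equation \eqref{O1}: it is a multiple sum (over $i,j,x,l\in I$ internally and over several $B$-labels), and even after the normalisation of Lemma~\ref{lem:gauge_transfs} one must carefully enumerate which index configurations contribute a non-zero term for the Fibonacci-type $T$ and then simplify the resulting relations with the sign-laden $F$-matrices and the $\zeta$-powers in the $R$-matrices of $\mcI_{\zeta,\epsilon}$. Tracking all these signs (the $\epsilon$'s, the $-1$'s in the $F$-matrices, the powers of $\zeta$) while collapsing the system to the clean $h$-parametrisation of \eqref{eq:Fib-sol-3} — and, in particular, pinning down why the order-$16$ cube root of $\zeta$ fails — is where essentially all of the work lies; verifying the mixed conditions \eqref{O4}--\eqref{O7} and the consistency of \eqref{O8} afterwards is comparatively routine.
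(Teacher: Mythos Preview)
Your overall strategy is exactly the paper's: normalise via Lemma~\ref{lem:gauge_transfs}, reduce to the six $f$-scalars with $b=c=d=\varphi$ together with $\psi_a,\phi$, extract a finite polynomial system from Table~\ref{tab:orb_eqs}, and solve. One correction, though: the relation $f^{\varphi,\opid}_{\varphi\varphi\varphi,\varone\varphi}\,f^{\varphi,\opid}_{\varphi\varphi\varphi,\varphi\varone}=\tfrac{\lambda}{h}\,f^{\varphi,\opid}_{\varphi\varphi\varphi,\varone\varone}$ does \emph{not} come from \eqref{O2}/\eqref{O3}. Those conditions only say that the $2\times 2$ channel-$\opid$ block of $f^\varphi_{\varphi\varphi\varphi}$ is invertible, i.e.\ has nonzero determinant; they neither force all four entries to be nonzero nor yield that specific product formula. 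In the paper this relation (and the nonvanishing of the individual $f$'s) is obtained from carefully chosen instances of \eqref{O1} --- see equations (a)--(f) in the proof, especially (e) --- and the constraint $h^3=\zeta$ comes from comparing (d) and (e).

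It is also worth being aware that the paper's computation passes through two intermediate sign ambiguities that your outline glosses over: a sign $\delta$ from solving $z^2+2z=2$ with $z=h\,f_{\varone\varphi}f_{\varphi\varone}$, and a sign $\nu$ from solving \eqref{O8}. Both are then fixed (as explicit functions of $h$ and $\epsilon$) only after feeding everything back into \eqref{O4}--\eqref{O7} and the remaining \eqref{O1}-equations; this step is done by computer algebra in the paper and is precisely where the primitivity of $h$ as a $48^{\text{th}}$ root of unity is forced. So your expectation that ``pinning down why the order-$16$ cube root of $\zeta$ fails'' is the crux is correct, but the mechanism is that these residual equations simultaneously kill the non-primitive $h$ and select $\delta,\nu$.
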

\begin{proof}
Let us first note that by Lemma \ref{lem:gauge_transfs} one can set
\begin{equation}
\label{eq:fs_init1}
f^{a, ~i}_{bcd, ~pq} = N_{{}_a t_{bp} ~ {}_p t_{cd}}^i ~ N_{{}_a t_{qd} ~ {}_q t_{bc}}^i~, \quad 
a,b,c,d,p,q\in B~, \quad i\in I
\end{equation}
whenever at least one of $b,c,d$ is $\varone\in B$.
For the rest of the $f$-coefficients which are not automatically zero by the fusion rules we use the abbreviations
\begin{gather}
    \nonumber
f^{\varone, ~\sigma}_{\varphi\varphi\varphi, ~\varphi\varphi} = h ~, \quad
f^{\varphi, ~\opid}_{\varphi\varphi\varphi, ~\varone\varone} = f_{\varone\varone} ~, \quad
f^{\varphi, ~\opid}_{\varphi\varphi\varphi, ~\varone\varphi}  = f_{\varone\varphi} ~, \quad
f^{\varphi, ~\opid}_{\varphi\varphi\varphi, ~\varphi \varone} = f_{\varphi \varone} ~,\\
\label{eq:fs_init3}
f^{\varphi, ~\opid}_{\varphi\varphi\varphi, ~\varphi\varphi} = f_{\varphi\varphi}^\opid ~, \quad
f^{\varphi, ~\vareps}_{\varphi\varphi\varphi, ~\varphi\varphi} = f_{\varphi\varphi}^\vareps ~.
\end{gather}

Next we consider some of the equations implied by \eqref{O1}:
\begin{equation}
{\def\arraystretch{1.5}
\begin{array}{cccccccccccc|cc}
a & b & c & d & e & p & q & r & s & g & m & k & \text{equation} &  
\\
\hline
\varone & \varphi & \varphi & \varphi & \varphi & \varphi & \varone & \varphi & \varone & \opid & \opid & \opid
& f_{\varone\varone}^2 + h f_{\varone \varphi} f_{\varphi \varone} = 1
& \text{(a)}
\\
\varone & \varphi & \varphi & \varphi & \varphi & \varphi & \varone & \varphi & \varphi & \opid & \opid & \opid
& f_{\varone\varone} f_{\varone\varphi} = - h f_{\varone\varphi} f_{\varphi\varphi}^\opid
& \text{(b)}
\\
\varone & \varphi & \varphi & \varphi & \varphi & \varphi & \varphi & \varphi & \varphi & \vareps & \vareps & \vareps
& h (f_{\varphi\varphi}^\vareps)^2 = h^2 \zeta^3
& \text{(c)}
\\
\varphi &  \varphi &  \varphi &  \varphi &  \varphi &  \varone &  \varphi &  \varone &  \varphi &  \sigma &  \sigma &  \sigma
& 
h^2 f_{\varone\varone} = \frac{\zeta f_{\varone\varphi} f_{\varphi \varone}}{\l}
& \text{(d)}
\\
\varphi &  \varphi &  \varphi &  \varphi &  \varphi &  \varone &  \varphi &  \varphi &  \varone &  \sigma &  \sigma &  \opid
& 
f_{\varone\varone} = \frac{h f_{\varone\varphi} f_{\varphi \varone}}{\l}
& \text{(e)}
\\
\varphi &  \varphi &  \varphi &  \varphi &  \varphi &  \varone &  \varphi &  \varphi &  \varphi &  \sigma &  \sigma &  \opid
& 
\frac{f_{\varone \varphi} f_{\varphi\varphi}^\vareps}{\zeta^3 \l^2} =
\frac{f_{\varone\varphi} f_{\varphi\varphi}^\opid}{\l}
\left( h - \frac{\zeta}{\l} \right)
& \text{(f)}
\end{array}}
\label{eq:O1eq}
\end{equation}
Let us denote $z= h f_{\varone \varphi} f_{\varphi \varone}$.
Substituting (\ref{eq:O1eq}\,e) into (\ref{eq:O1eq}\,a) yields $z^2 + \l^2 z = \l^2$.
Substituting \eqref{eq:lambda_ito_zeta} gives $z^2 + 2z = 2$, which implies
\begin{equation}
\label{eq:z_sols}
z = -1 + \d \sqrt{3}~, \quad \d=\pm 1 ~.
\end{equation}
In particular, $h$, $f_{\varone \varphi}$, $f_{\varphi \varone}$ are all non-zero.
Other constants can then also be expressed in terms of $z$ as follows:
\begin{equation}
\label{eq:fs_ito_z}
f_{\varone\varone} \overset{\text{(\ref{eq:O1eq}\,e)}}= \frac{z}{\l}~, \quad
f_{\varphi\varphi}^\opid \overset{\text{(\ref{eq:O1eq}\,b)}}= -\frac{z}{\l h}~, \quad
f_{\varphi\varphi}^\vareps \overset{\text{(\ref{eq:O1eq}\,f)}}= \frac{\zeta^3}{h} \left( \frac{\zeta}{\l} - h \right) \cdot z ~.
\end{equation}
In particular, $f_{\varone\varone}$ and $f_{\varphi\varphi}^\opid$ are non-zero.
Substituting \eqref{eq:fs_ito_z} into (\ref{eq:O1eq}\,d) yields the relation $\zeta = h^3$.
Using this, one can conclude that also $f_{\varphi\varphi}^\vareps \neq 0$.

\medskip

Conditions \eqref{O2} and \eqref{O3} are equivalent to expressing 
the constants $g^{a, ~i}_{bcd, ~pq}$, $a,b,c,d,p,q \in B$, $i \in I$ in terms of the ones in \eqref{eq:fs_init1} and \eqref{eq:fs_init3}:
\begin{gather}
    \nonumber
g^{a, ~i}_{bcd, ~pq} = N_{{}_a t_{pd} ~ {}_p t_{bc}}^i N_{{}_a t_{bq} ~ {}_q t_{cd}}^i, \quad
\text{if $\varone\in\{b,c,d\}$} ~,\\
    \nonumber
g^{\varone, ~\sigma}_{\varphi\varphi\varphi, ~\varphi\varphi} = \frac{1}{h} ~, \quad
g^{\varphi, ~\opid}_{\varphi\varphi\varphi, ~\varone\varone} = -h f_{\varphi\varphi}^\opid ~, \quad
g^{\varphi, ~\opid}_{\varphi\varphi\varphi, ~\varone\varphi} = h f_{\varone\varphi}, \quad
g^{\varphi, ~\opid}_{\varphi\varphi\varphi, ~\varphi\varone} = h f_{\varphi\varone} ~,\\
\label{eq:gs_ito_fs3}
g^{\varphi, ~\opid}_{\varphi\varphi\varphi, ~\varphi\varphi} = -h f_{\varone\varone} ~, \quad
g^{\varphi, ~\opid}_{\varphi\varphi\varphi, ~\varphi\varphi} = \frac{1}{f_{\varphi\varphi}^\vareps} ~,
\end{gather}
the rest of them being automatically zero.

\medskip

Among the equations given by the condition \eqref{O8} only two are distinct, namely
\begin{equation}
\label{eq:O8eqs}
\psi_\varone^4 + \psi_\varphi^4 = \frac{\psi_\varone^2}{\phi^2}~, \quad
2 \psi_\varone^2 \psi_\varphi^2 + \epsilon\l \psi_\varphi^2 = \frac{\psi_\varphi^2}{\phi^2} ~.
\end{equation}
The solutions to \eqref{O8} therefore are
\begin{equation}
\label{eq:psi_sols}
\psi_\varone^2 = \frac{1}{2\phi^2}\left( 1 - \frac{\nu\epsilon\l}{\sqrt{6}}  \right), \quad
\psi_\varphi^2 = \frac{\nu}{\phi^2 \sqrt{6}}, \qquad \nu = \pm 1 ~.
\end{equation}

\medskip

At this point the solutions are collected in \eqref{eq:fs_ito_z}, \eqref{eq:z_sols} and \eqref{eq:psi_sols} and para\-metrised by $(h,\d,\nu)$, where $h^{24} = -1$ (implied by \eqref{eq:I_zeta_epsilon}) and $\d, \nu\in\{1, -1\}$.
Plugging them into \eqref{O4}-\eqref{O7} and into the remaining equations implied by \eqref{O1}, one finds\footnote{\label{fn:CAS}
{} For most of these computations we used the computer algebra system Mathematica.} 
that $h$ must be a primitive 48th root of unity and that $\d$, $\nu$ are determined by $h$ and $\epsilon$ as follows, writing $h = \exp\left({\frac{\pi n i}{24}}\right)$,
\begin{center}
\begin{tabular}{|c|c|c|c|c|c|c|c|c|c|c|c|c|c|c|c|c|}
\hline
$n$   & 1 & 5 & 7 & 11 & 13 & 17 & 19 & 23 & 25 & 29 & 31 & 35 & 37 & 41 & 43 & 47\\
\hline
$\d$  & $-$ & + & + & $-$  & $-$  & +  & +  & $-$  & $-$  & +  & +  & $-$  & $-$  & +  & +  & $-$  \\
\hline
$\epsilon \nu$ & $-$ & $-$ & + & +  & +  & +  & $-$  & $-$  & $-$  & $-$  & +  & +  & +  & +  & $-$  & $-$\\
\hline
\end{tabular}
\end{center}
The signs can be expressed explicitly in terms of $h$ and $\epsilon$ as
\begin{equation}
\d  = \frac{2h^4 - h^{12}}{\sqrt{3}} ~, \quad
\nu = \frac{h^6 + h^{-6}}{\sqrt{2}} \d \epsilon ~.
\end{equation}
With the expressions derived above one can now verify that \eqref{eq:Fib-sol-3} indeed gives all solutions to \eqref{O1}--\eqref{O8}.

\medskip

Different choices for $\phi^2$ are trivially related by a rescaling as in \eqref{eq:rescaled-orbifold}.

It remains to show that different choices of $f_{\varone\varphi}$, $f_{\varphi\varone}$ such that
$h f_{\varone\varphi} f_{\varphi\varone} = \l f_{\varone\varone}$ give orbifold data that are related by $T$-compatible isomorphisms. To see this, take all constants ${}_a \l_{bc}$ in \eqref{eq:rho-direct-sum} (which correspond to a non-zero ${}_a t_{bc}$) to be equal to one, except for ${}_\varphi \l_{\varphi\varphi}$. Using \eqref{eq:f_ftilde_rels} to compute the effect on the $f$-coefficients shows that all but $f_{\varone\varphi}$ and $f_{\varphi\varone}$ are invariant, and that the latter two get multiplied by $({}_\varphi \l_{\varphi\varphi})^2$ and $({}_\varphi \l_{\varphi\varphi})^{-2}$, respectively.
\end{proof}

Let us denote the orbifold datum obtained in Theorem~\ref{thm:main-detail} by $\opA_{h,\epsilon}$. 
By Proposition~\ref{prop:T-com-rescale-equivcat} (and Theorem~\ref{thm:main-detail}), different choices for $\phi^2, f_{\varphi\varphi\varphi,~\varone\varphi}^{\varphi,~\opid} \in \opC^\times$ lead to equivalent ribbon fusion categories $(\mcI_{h^3,\epsilon})_{\opA_{h,\epsilon}}$. Hence it make sense not to include these choices into the notation for the orbifold datum $\opA_{h,\epsilon}$.

Starting from the data in \eqref{eq:Fib-sol-1}--\eqref{eq:Fib-sol-3}, by direct computation${}^{\ref{fn:CAS}}$ from \eqref{eq:dimCAAA} and Proposition~\ref{prop:count-simple},  and by Theorem \ref{thm:CA-is-modular} one obtains:

\begin{prp}\label{prop:C_A-properties}
The orbifold datum $\opA_{h,\epsilon}$ is simple. The category $(\mcI_{h^3,\epsilon})_{\opA_{h,\epsilon}}$ is a modular fusion category with 11 simple objects and has global dimension
\begin{equation}\label{eq:Fib-orb-global}
\Dim\!	\big( (\mcI_{h^3,\epsilon})_{\opA_{h,\epsilon}} \big)
~=~ 24\, \big(h^2+h^{-2}\big)^{-2} \ .
\end{equation}
\end{prp}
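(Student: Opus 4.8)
The plan is to verify the three separate assertions by plugging the explicit solution of Theorem~\ref{thm:main-detail} into the general formulas established in Section~\ref{sec:CA-properties}. For simplicity of $\opA_{h,\epsilon}$, I would apply Proposition~\ref{prop:A-simple}: substitute $B=\{\varone,\varphi\}$, the ansatz \eqref{eq:Fib-sol-1} for the objects ${}_at_{bc}$, the dimensions $\dim\opid=1$, $\dim\sigma=\epsilon\lambda$ from Section~\ref{sec:Ising}, and the values of $\psi_\varone^2\phi^2$, $\psi_\varphi^2\phi^2$ from \eqref{eq:Fib-sol-3} into the right hand side of \eqref{eq:dimCAAA}. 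The sum over $a,b,d,p\in B$ has $16$ terms, but most vanish because ${}_at_{pd}$ or ${}_pt_{ba}$ is forced to be zero by \eqref{eq:Fib-sol-1} unless the number of $\varphi$'s among the relevant indices is $0$ or $2$ (giving $\opid$) or exactly $3$ (giving $\sigma$). Collecting the surviving terms and using $\lambda^2=2$ together with the closed form $\psi_\varone^2\phi^2=\tfrac{1}{3-h^4-h^{-4}}$, $\psi_\varphi^2\phi^2=-\epsilon\,\tfrac{h^{10}+h^{-10}}{3-h^4-h^{-4}}$, one checks the expression collapses to $1$; note $h^{10}+h^{-10}=-(h^2+h^{-2})$ since $h$ is a primitive $48$th root of unity, which is the identity that makes the cancellation work.

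Next, for the global dimension I would use the formula $\Dim\mcC_\opA = \Dim\mcC / (\phi^8\,(\tr_\mcC\psi^4)^2)$ from the displayed equation just after Proposition~\ref{prop:A-simple}, specialised via $(*)$ to $\Dim\mcC_\opA = \big(\sum_{i\in I}(\dim i)^2\big)/\big(\phi^8(\sum_{b\in B}\psi_b^4)^2\big)$. Here $\sum_{i\in I}(\dim i)^2 = \Dim(\mcI_{\zeta,\epsilon}) = 4$ by \eqref{eq:Ising-Dim-anomaly}, and $\sum_{b\in B}\psi_b^4 = \psi_\varone^4+\psi_\varphi^4$. Writing $\psi_\varone^4+\psi_\varphi^4 = \phi^{-4}\big((\psi_\varone^2\phi^2)^2+(\psi_\varphi^2\phi^2)^2\big)$, the $\phi$-dependence cancels against $\phi^8$ in the denominator, so $\Dim\mcC_\opA = 4/\big((\psi_\varone^2\phi^2)^2+(\psi_\varphi^2\phi^2)^2\big)^2$. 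Substituting the two closed forms from \eqref{eq:Fib-sol-3}, one computes $(\psi_\varone^2\phi^2)^2+(\psi_\varphi^2\phi^2)^2 = \big(1+(h^{10}+h^{-10})^2\big)/(3-h^4-h^{-4})^2$; using $h^{10}+h^{-10}=-(h^2+h^{-2})$ and a routine simplification of $(3-h^4-h^{-4})$ in terms of $h^2+h^{-2}$ (namely $3-h^4-h^{-4} = 1-(h^2+h^{-2})^2 + 3 = \dots$, carefully tracked) this reduces to a rational function of $h^2+h^{-2}$ whose reciprocal, times $4$, equals $24(h^2+h^{-2})^{-2}$. I would do this last algebraic reduction explicitly but briefly, since it is the only genuinely computational part of this step.

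For the count of simple objects, I would invoke Proposition~\ref{prop:count-simple}: the number of isomorphism classes of simple objects in $(\mcI_{h^3,\epsilon})_{\opA_{h,\epsilon}}$ equals the right hand side of \eqref{eq:orb-simple}, the $T^3$-invariant of the orbifold TQFT. This is the sum over $a,\dots,g\in B$ and $r,\dots,w,x,y,z,k,l,m\in I$ of products of the scalars $f^{a,i}_{bcd,pq}$, $g^{a,i}_{bcd,pq}$ (given explicitly by \eqref{eq:fs_init1}, \eqref{eq:fs_init3}, \eqref{eq:fs_ito_z}, \eqref{eq:gs_ito_fs3}), the $\psi$'s, and the $F$-, $G$-, $R$-matrices and twists of $\mcI_{\zeta,\epsilon}$ recorded in Section~\ref{sec:Ising}. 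Since $B$ has $2$ elements and $I$ has $3$, this is a finite sum, and the fusion constraints $N$ built into the $L$-symbol and into the $f,g$ scalars kill the overwhelming majority of terms; I would evaluate it by computer algebra (as the paper does, cf.\ footnote~\ref{fn:CAS}) and report the value $11$. Finally, modularity of $(\mcI_{h^3,\epsilon})_{\opA_{h,\epsilon}}$ follows from Theorem~\ref{thm:CA-is-modular} once simplicity is established. \textbf{The main obstacle} is the evaluation of \eqref{eq:orb-simple}: it is an intricate bookkeeping problem — tracking the $L$-symbol, the chain of $F,G$ symbols, and $T_{xyz,klm}$ with its inner sum over $p,j\in I$ — and while each individual ingredient is explicit, organising the cancellations (or simply trusting the symbolic computation) is where essentially all the work lies; by contrast the simplicity and global-dimension computations are short once the root-of-unity identity $h^{10}+h^{-10}=-(h^2+h^{-2})$ is in hand.
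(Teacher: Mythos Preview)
Your approach is exactly the paper's: plug the explicit solution into Proposition~\ref{prop:A-simple} for simplicity, into the displayed global-dimension formula, and into Proposition~\ref{prop:count-simple} (evaluated by computer algebra) for the count of simples, then invoke Theorem~\ref{thm:CA-is-modular} for modularity.

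One correction, though: the identity $h^{10}+h^{-10}=-(h^2+h^{-2})$ that you say ``makes the cancellation work'' is false for a primitive $48^{\text{th}}$ root of unity (for $h=e^{\pi i/24}$ the two sides are roughly $0.52$ and $-1.93$). From $h^{24}=-1$ one gets $h^{22}+h^{-22}=-(h^2+h^{-2})$, not the version with exponent $10$, so your hand computations as sketched will not close up. A cleaner route for the global dimension avoids this entirely: use \eqref{eq:O8eqs} directly, $\psi_\varone^4+\psi_\varphi^4=\psi_\varone^2/\phi^2$, so that $\phi^8(\psi_\varone^4+\psi_\varphi^4)^2=(\psi_\varone^2\phi^2)^2=(3-h^4-h^{-4})^{-2}$ and hence $\Dim\mcC_\opA=4(3-h^4-h^{-4})^2$. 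Setting $u=h^2+h^{-2}$ gives $3-h^4-h^{-4}=5-u^2$, and the claim reduces to $(5-u^2)^2u^2=6$, which holds because $h^2$ is a primitive $24^{\text{th}}$ root of unity (equivalently, it follows from the minimal polynomial of $u$). The simplicity check via \eqref{eq:dimCAAA} likewise goes through once you drop the false identity and just compute, as the paper does.
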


\begin{rem}\label{rem:which-h}
In Remark~\ref{rem:which-zeta}\,(2) we noted that extending $\mcC(sl(2),10)$ by the commutative algebra $A=\underline{0} \oplus \underline{6}$
(i.e.\ passing to the category of local modules) results in $\mcI_{\zeta,\epsilon}$ with $\zeta=\exp(\pi i \frac38)$ and $\epsilon=-1$. In \cite[Sec.\,3.4]{CRS3} it was explained how extending by a commutative algebra can be understood as a generalised orbifold, and in \cite{Mu-prep} it will be shown
that this procedure can in turn be inverted by an appropriate orbifold datum. In the $\mcC(sl(2),10)$ example, this will be achieved by one of the $\opA_{h,\epsilon=-1}$ discussed here. In fact, we can already deduce the relevant value of $h$. The condition $h^3=\zeta$ shows that $h=\exp(\pi i \frac{m}{24})$ with $m \in \{3,19,35\}$. Of these, only $m \in \{19,35\}$ gives a primitive $48^\text{th}$ roots of unity. The global dimension of $\mcC(sl(2),k)$ is $\frac{k+2}2 \big(\sin \frac{\pi}{k+2}\big)^{-2}$ and only for $m=19$ does \eqref{eq:Fib-orb-global} give the global dimension $89.5..$ of $\mcC(sl(2),10)$.
\end{rem}

\section{Analysing $\mcC_\opA$ via adjunctions}\label{sec:adjoint}

Let $\opA = (A, T, \a, \abar, \psi, \phi)$ be an orbifold datum in a modular fusion category $\mcC$.
Computing the simple objects and tensor products of $\mcC_\opA$ requires analysing the possible $T$-crossings, which can be complicated.
In this section we will illustrate that one can obtain some of that information already from looking only at the underlying bimodules.

\medskip

The forgetful functor $U : \mcC_\opA \to \mcACA$, $(M,\tau^M_i) \mapsto M$ has a biadjoint $P: \mcACA \to \mcC_\opA$, see  \cite[Rem.\,3.12]{Mulevicius:2020bat}. Explicitly, $P$ is given by
\begin{equation}
\label{eq:pipe_functor}
P(M)= \left( \im \pic[2.0]{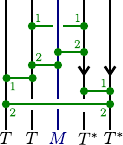}, ~\tau^{P(M)}_i \right) ~,
\end{equation}
where we do not spell out the $T$-crossings of $P(M)$ as they will not be relevant in this section. 
The horizontal lines in the above diagram are all labelled by $A$ and connected to $T$ by the first or second (co)action as indicated. One can verify that the corresponding endomorphism of $T\otimes T \otimes M \otimes T^* \otimes T^*$ is an idempotent. 

For a (multi-)fusion category $\mathcal{F}$, we denote by $\Irr\mathcal{F}$ be a set of representatives of the isomorphism classes of simple objects. In particular, $I = \Irr\,\mcC$.
    
For simple bimodules $\mu,\nu \in\Irr_{\mcACA}$
we define the constants
\begin{equation}
X_{\mu\nu} 
=
\dim \mcC_\opA(P(\mu),P(\nu))
= 
\sum_{\Lambda\in\Irr_{\mcC_\opA}} \dim \mcC_\opA(P(\mu),\Lambda) \dim \mcC_\opA(\Lambda, P(\nu)) 
~.
\end{equation}
Applying the adjunction, this can be rewritten as (we do not write out the forgetful functor)
\begin{equation}\label{eq:Xmn-via-bimodules}
X_{\mu\nu} 
=
\dim \mcACA(P(\mu),\nu)
= 
\sum_{\Lambda\in\Irr_{\mcC_\opA}} \dim \mcACA(\mu,\Lambda) \dim \mcACA(\Lambda, \nu) 
~.
\end{equation}
In particular, the first equality shows that $X_{\mu\nu}$ describes how the underlying bimodule of $P(\mu)$ decomposes into simple bimodules,
\begin{equation}
P(\mu) \stackrel{\mcACA}{\cong} \bigoplus_{\nu\in\Irr_{\mcACA}} X_{\mu\nu} ~ \nu ~.
\end{equation}
Every simple object of $\mcC_\opA$ appears as direct summand of some $P(\mu)$ (use the unit or counit of the adjunction to see this).
Thus one can attempt to find the simple objects of $\mcC_\opA$ by using the above information about Hom spaces to decompose all the $P(\mu)$.

Under assumptions \eqref{eq:assumption1}--\eqref{eq:assumption3}, firstly, the simple objects of $\mcACA$ are of the form ${}_a x_b$, where $x \in I$ and $a,b\in B$, and the left and right actions of $A$ are by the unitors of $\opid_a$ and $\opid_b$ respectively.
Secondly, by specialising \eqref{eq:pipe_functor} one gets
\begin{equation}
P({}_a x_b) = \bigoplus_{p,q,r,s,v\in B} {}_p t_{rv} \otimes {}_r t_{ua} \otimes x
\otimes {}_s t^*_{ub} \otimes {}_q t^*_{sv} ~ .
\end{equation}
Applying the fusion rules of $\mcI_{\zeta, \epsilon}$ gives
\begin{align}
X_{{}_a x_b, ~ {}_p y_q}
&= \dim \mcACA\left( ~ P({}_a x_b), ~ {}_p y_q ~ \right)
\nonumber \\
&= \sum_{i,j,k\in I} \sum_{r,s,u,v\in B}
N^{i}_{{}_p t_{rv}, ~ {}_r t_{ua}} ~
N^{j}_{i, x} ~
N^{k}_{j, ~ {}_s t_{ub}^*} ~
N^{y}_{k, ~ {}_q t_{sv}^*} ~.
\end{align}

\medskip

Let us now specialise these general considerations to the concrete example of the orbifold datum
$\opA = \opA_{h,\epsilon}$ in $\mcC = \mcI_{h^3,\epsilon}$ as given in Section~\ref{sec:orbdata}. 
Note that the precise values of $h$ and $\epsilon$ are immaterial as we only need the fusion rules of $\mcI_{\zeta,\epsilon}$ and the expressions for $A$ and $T$ in \eqref{eq:Fib-sol-1}.
The simple $A$-$A$-bimodules are
\begin{equation}
\underbrace{
{}_\varone \opid_\varone ~,~
{}_\varphi \opid_\varphi ~,~
{}_\varone \vareps_\varone ~,~
{}_\varphi \vareps_\varphi ~,~
{}_\varone \sigma_\varphi ~,~
{}_\varphi \sigma_\varone }_{\text{grade 0}} \quad,\quad
\underbrace{{}_\varone \opid_\varphi ~,~
{}_\varphi \opid_\varone ~,~
{}_\varone \vareps_\varphi ~,~
{}_\varphi \vareps_\varone ~,~
{}_\varone \sigma_\varone ~,~
{}_\varphi \sigma_\varphi}_{\text{grade 1}}  ~.
\end{equation}
The grading indicated above is respected by the tensor product $\otimes_A$, and it turns out that the matrix $X$ is block-diagonal with respect to this grading.\footnote{
   Actually, $\mcACA$ is graded by $\opZ_2 \times \opZ_2$, but only the indicated $\opZ_2$ is respected by $X$.} 
Indeed, a straightforward computation shows that, in the above ordering of the simple bimodules,
\begin{equation}
\label{eq:Ising_Xmat}
\left( X_{{}_a x_b, ~ {}_p y_q} \right) =
\left(
{\small
\begin{array}{cccccc}
2 & 3 & 0 & 1 & 1 & 1\\
3 & 8 & 1 & 6 & 5 & 5\\
0 & 1 & 2 & 3 & 1 & 1\\
1 & 6 & 3 & 8 & 5 & 5\\
1 & 5 & 1 & 5 & 4 & 4\\
1 & 5 & 1 & 5 & 4 & 4
\end{array}}
\right)
\oplus
\left(
{\small
\begin{array}{cccccc}
3 & 3 & 1 & 1 & 1 & 5\\
3 & 3 & 1 & 1 & 1 & 5\\
1 & 1 & 3 & 3 & 1 & 5\\
1 & 1 & 3 & 3 & 1 & 5\\
1 & 1 & 1 & 1 & 2 & 4\\
5 & 5 & 5 & 5 & 4 & 14
\end{array}}
\right) ~.
\end{equation}

Let us first focus on the grade-$0$ block of $X$. 
We already know a simple object of $\mcC_\opA$, namely the tensor unit $A = {}_\varone \opid_\varone \oplus {}_\varphi \opid_\varphi$. Hence, we may as well decompose each $P(\mu)$ as
\begin{equation}
P(\mu) \cong \underbrace{\mcC_\opA(P(\mu),A)}_{=\mcACA(\mu, A)} A \oplus P^{(1)}(\mu) \ ,
\end{equation}
where now $P^{(1)}(\mu)$ does no longer contain $A$ as a direct summand. Equivalently,
\begin{equation}
P^{(1)}(\mu) \cong \bigoplus_{\Lambda\in\Irr_{\mcC_\opA}, \Lambda \neq A}  \mcACA(\mu,\Lambda) \, \Lambda ~.
\end{equation}
In terms of the  $P^{(1)}(\mu)$ we can define a new matrix $X^{(1)}_{\mu\nu} = \dim \mcC_\opA(P^{(1)}(\mu),P^{(1)}(\nu))$, which can be written as
\begin{align}
X^{(1)}_{\mu\nu}
&= 
\sum_{\Lambda\in\Irr_{\mcC_\opA}, \Lambda \neq A} \dim \mcACA(\mu,\Lambda) \dim \mcACA(\Lambda, \nu)
\nonumber \\
&= 
X_{\mu\nu} - 
\dim \mcACA(\mu,A) \dim \mcACA(A, \nu)
~.
\end{align}
Explicitly, the grade-$0$ summand $X^{(1)}\big|_{\text{grade 0}}$ is given by
\begin{equation}
\left(
{\small
\begin{array}{cccccc}
2 & 3 & 0 & 1 & 1 & 1\\
3 & 8 & 1 & 6 & 5 & 5\\
0 & 1 & 2 & 3 & 1 & 1\\
1 & 6 & 3 & 8 & 5 & 5\\
1 & 5 & 1 & 5 & 4 & 4\\
1 & 5 & 1 & 5 & 4 & 4
\end{array}}
\right) -
\left(
{\small
\begin{array}{cccccc}
1 & 1 & 0 & 0 & 0 & 0\\
1 & 1 & 0 & 0 & 0 & 0\\
0 & 0 & 0 & 0 & 0 & 0\\
0 & 0 & 0 & 0 & 0 & 0\\
0 & 0 & 0 & 0 & 0 & 0\\
0 & 0 & 0 & 0 & 0 & 0
\end{array}}
\right) =
\left(
{\small
\begin{array}{cccccc}
1 & 2 & 0 & 1 & 1 & 1\\
2 & 7 & 1 & 6 & 5 & 5\\
0 & 1 & 2 & 3 & 1 & 1\\
1 & 6 & 3 & 8 & 5 & 5\\
1 & 5 & 1 & 5 & 4 & 4\\
1 & 5 & 1 & 5 & 4 & 4
\end{array}}
\right) ~.
\end{equation}
The first diagonal entry reads $\dim \mcC_\opA(P^{(1)}({}_\varone \opid_\varone),P^{(1)}({}_\varone \opid_\varone))=1$, which means that 
$P^{(1)}({}_\varone \opid_\varone) =: \D$ is itself a simple object of $\mcC_\opA$. 

Note that we can write $X^{(1)}_{\mu\nu} = \dim \mcC_\opA(P^{(1)}(\mu),P(\nu)) = \dim \mcACA(P^{(1)}(\mu),\nu)$, and so can still read off the decomposition into simple bimodules form the first row of the above matrix:
\begin{equation}
\D = 
{}_\varone \opid_\varone ~\oplus ~ 
{}_\varphi \opid_\varphi^{\oplus 2} ~\oplus ~ 
{}_\varphi \vareps_\varphi ~\oplus ~
{}_\varone \sigma_\varphi ~\oplus ~ 
{}_\varphi \sigma_\varone ~.
\end{equation}

Iterating the above procedure, we now define $P^{(2)}$ and $X^{(2)}$ by excluding the simple objects $A$ and $\D$ from the sum. One finds that $X^{(2)}\big|_{\text{grade 0}}$ is given by
\begin{equation}
\left(
{\small
\begin{array}{cccccc}
1 & 2 & 0 & 1 & 1 & 1\\
2 & 7 & 1 & 6 & 5 & 5\\
0 & 1 & 2 & 3 & 1 & 1\\
1 & 6 & 3 & 8 & 5 & 5\\
1 & 5 & 1 & 5 & 4 & 4\\
1 & 5 & 1 & 5 & 4 & 4
\end{array}}
\right) -
\left(
{\small
\begin{array}{cccccc}
1 & 2 & 0 & 1 & 1 & 1\\
2 & 4 & 0 & 2 & 2 & 2\\
0 & 0 & 0 & 0 & 0 & 0\\
1 & 2 & 0 & 1 & 1 & 1\\
1 & 2 & 0 & 1 & 1 & 1\\
1 & 2 & 0 & 1 & 1 & 1
\end{array}}
\right) =
\left(
{\small
\begin{array}{cccccc}
0 & 0 & 0 & 0 & 0 & 0\\
0 & 3 & 1 & 4 & 3 & 3\\
0 & 1 & 2 & 3 & 1 & 1\\
0 & 4 & 3 & 7 & 4 & 4\\
0 & 3 & 1 & 4 & 3 & 3\\
0 & 3 & 1 & 4 & 3 & 3
\end{array}}
\right) ~.
\end{equation}
The third diagonal entry shows that $P^{(2)}({}_\varone \vareps_\varone)$ is a direct sum of two non-isomorphic simple objects in $\mcC_\opA$, which we denote by $E_1$ and $E_2$. The corresponding row again gives the decomposition into bimodules as
\begin{equation}
E_1 \oplus E_2 =
{}_\varphi \opid_\varphi ~\oplus ~
{}_\varone \vareps_\varone^{\oplus 2} ~\oplus ~
{}_\varphi \vareps_\varphi^{\oplus 3} ~\oplus ~
{}_\varone \sigma_\varphi ~\oplus ~
{}_\varphi \sigma_\varone ~.
\end{equation}
From \eqref{eq:Xmn-via-bimodules} -- applied to $P^{(2)}$ -- one obtains a constraint on how to distribute the bimodules between $E_1$ and $E_2$. Namely, for each $\mu$ we have
\begin{equation}
X^{(2)}_{\mu\mu} ~\ge~ 
\big( \dim \mcACA(\mu,E_1) \big)^2
+
\big( \dim \mcACA(\mu,E_2) \big)^2
~.
\end{equation}
It follows that each $E_i$ must contribute one copy of ${}_\varone \vareps_\varone$, and that no one of the $E_i$ can contain all three copies of ${}_\varphi \vareps_\varphi$. If we denote the summand that contains ${}_\varphi \opid_\varphi$ by $E_1$, the remaining possibilities are
\begin{equation}
\label{eq:E1_E2_decomps}
\begin{array}{lll}
E_1
&=
&{}_\varphi \opid_\varphi ~\oplus ~
{}_\varone \vareps_\varone ~\oplus ~
{}_\varphi \vareps_\varphi^{\oplus (1+u)} ~\oplus ~
{}_\varone \sigma_\varphi^{\oplus x} ~\oplus ~
{}_\varphi \sigma_\varone^{\oplus y}\\
E_2
&=
&{}_\varone \vareps_\varone ~\oplus ~
{}_\varphi \vareps_\varphi^{\oplus (2-u)} ~\oplus ~
{}_\varone \sigma_\varphi^{\oplus (1-x)} ~\oplus ~
{}_\varphi \sigma_\varone^{\oplus (1-y)}
\end{array}, \quad
u,x,y = 0,1 ~.
\end{equation}
If we denote by $X^{(3)}$ the matrix obtained by the sum in \eqref{eq:Xmn-via-bimodules} with simple objects $A,\D,E_1,E_2$ omitted, we find
\begin{equation}
X^{(3)}\big|_{\text{grade 0}} =
\left(
{\small
\begin{array}{cccccc}
0 & 0   & 0 & 0           & 0           & 0\\
0 & 2   & 0 & 3-u         & 3-x         & 3-y\\
0 & 0   & 0 & 0           & 0           & 0\\
0 & 3-u & 0 & 2           & 1+\d_{ux}   & 1+\d_{uy}\\
0 & 3-x & 0 & 1+\d_{ux}   & 2           & 1+\d_{xy}\\
0 & 3-y & 0 & 1+\d_{uy}   & 1+\d_{xy}   & 2
\end{array}}
\right) ~.
\end{equation}
From the second row we deduce that there are two further non-isomorphic simple objects $\Phi_1$, $\Phi_2$, such that
\begin{equation}
\Phi_1 \oplus \Phi_2 =
{}_\varphi \opid_\varphi^{\oplus 2} ~\oplus ~
{}_\varphi \vareps_\varphi^{\oplus (3-u)} ~\oplus ~
{}_\varone \sigma_\varphi^{\oplus (3-x)} ~\oplus ~
{}_\varphi \sigma_\varone^{\oplus (3-y)} ~.
\end{equation}
The only way to satisfy the bound given by the diagonal entries is to have $u=x=y=1$ and to distribute the bimodule direct summands equally between $\Phi_1$ and $\Phi_2$.
Thus $\Phi_1=\Phi_2$ as bimodules (but not as objects in $\mcC_\opA$).

At this point we have found the bimodule part of all simple objects of grade $0$.
One can repeat the procedure to obtain those of grade $1$ as well.
In this case two solutions are possible: one yielding $6$ simple objects, and the other $5$.
From Proposition~\ref{prop:C_A-properties} we already know that there are $11$ simple objects in total, which eliminates the first solution.

\medskip

In order to study the tensor products of these 11 simple objects of $\mcC_\opA$ via their underlying bimodules, it is helpful to use a matrix notation for the direct sum decomposition:
\begin{equation}
M = {}_\varone W_\varone \oplus {}_\varone X_\varphi \oplus {}_\varphi Y_\varone \oplus {}_\varphi Z_\varphi
\quad \leadsto \quad
M = \begin{pmatrix}
W & X \\
Y & Z
\end{pmatrix}
~.
\end{equation}
The tensor product $\otimes_A$ is then given by matrix multiplication. In this notation, the decomposition of the simple objects in $\mcC_\opA$ into simple bimodules is\footnote{
 {} Here we write multiplicities as multiplying by integers and ``$+$'' instead of ``$\oplus$'' for better readability, that is, we work with entries in the Grothendieck ring of $\mcC$.}
\begin{align}
&\text{grade 0}
&& \text{grade 1}
\nonumber 
\\
A &= \begin{pmatrix} \opid & 0 \\ 0 & \opid \end{pmatrix}
&
S_1 &= \begin{pmatrix} \sigma & \opid \\ \opid & 2\sigma \end{pmatrix}
\nonumber 
\\
\D &= \begin{pmatrix} \opid & \sigma \\ \sigma & 2\opid+ \vareps \end{pmatrix}
&
S_2 &= \begin{pmatrix} \sigma & \vareps \\ \vareps & 2\sigma \end{pmatrix}
\nonumber 
\\
E_1 &= \begin{pmatrix} \vareps & \sigma \\ \sigma & \opid + 2 \vareps \end{pmatrix}
&
\Psi_1 &= \begin{pmatrix} 0 & \opid \\ \opid & \sigma \end{pmatrix}
\label{eq:fib-ising-simple-as-bimodule}
\\
E_2 &= \begin{pmatrix} \vareps & 0 \\ 0 & \vareps \end{pmatrix}
&
\Psi_2 &= \begin{pmatrix} 0 & \opid+\vareps \\ \opid+\vareps & 2\sigma \end{pmatrix}
\nonumber 
\\
\Phi_1 = \Phi_2 &=  \begin{pmatrix} 0 & \sigma \\ \sigma & \opid+\vareps \end{pmatrix}
&
L &= \begin{pmatrix} 0 & \vareps \\ \vareps & \sigma \end{pmatrix}
\nonumber 
\end{align}
Thus, in this example all simple objects of $\mcC_\opA$ except for $\Phi_1$, $\Phi_2$ are uniquely characterised by their underlying bimodule. Since taking duals is compatible with the underlying bimodule, in particular all simple objects except for possibly $\Phi_{1/2}$ are self-dual.

By \cite[Sec.\,3.2]{Mulevicius:2020bat}, the underlying bimodules determine the quantum dimension of objects in $\mcC_\opA$. 
Namely, if $M \in \mcC_\opA$ and $M = \sum_{a,b \in B} {}_aM_b$ as an $A$-$A$-bimodule with ${}_aM_b \in \mcC$, then, for $a \in B$,
    \begin{equation}
        \dim_{\mcC_\opA}(M) = \sum_{b \in B} \frac{\psi_b^2}{\psi_a^2} \dim_{\mcC}({}_aM_b) \ .
    \end{equation}
In particular, it follows that the above expression is independent of the choice of $a \in B$ (for $\opA$ simple), which in itself is a non-trivial condition if one tries to understand which bimodules can appear in objects of $\mcC_\opA$.

We can therefore use the expressions in \eqref{eq:fib-ising-simple-as-bimodule} to compute the quantum dimensions of the 11 simple objects. We have done that for $h =\exp(\pi i \frac{19}{24})$ as in Remark~\ref{rem:which-h} and recovered the quantum dimensions of $\mcC(sl(2),10)$, see Table~\ref{tab:dim-ident}.

\begin{table}
\begin{center}
{
\def\arraystretch{1.5}
\begin{tabular}{c|c|c|c|c|c|c}
$\dim$ &
1 &
1.93.. &
2.73.. &
3.34.. &
3.73.. &
3.86.. 
 \\
\hline
 $\mcC(sl(2),10)$    &  
 \underline{0}, \underline{10} &
 \underline{1}, \underline{9} &
 \underline{2}, \underline{8} &
 \underline{3}, \underline{7} &
 \underline{4}, \underline{6} &
 \underline{5}
 \\
\hline
$(\mcI_{h^3,\epsilon})_{\opA_{h,\epsilon}}$    & 
$A$, $E_2$ &
$\Psi_1$, $L$ &
$\Phi_1$, $\Phi_2$ &
$S_1$, $S_2$ &
$\D$, $E_1$ &
$\Psi_2$  
\end{tabular}}
\end{center}
\caption{Simple objects of $\mcC(sl(2),10)$ and of $(\mcI_{h^3,\epsilon})_{\opA_{h,\epsilon}}$ for $h =\exp(\pi i \frac{19}{24})$, sorted by quantum dimension. For $\mcC(sl(2),10)$ the Dynkin label $\underline{0}, \underline{1},\dots, \underline{10}$ is used to denote the simple objects, and for $(\mcI_{h^3,\epsilon})_{\opA_{h,\epsilon}}$ the notation in \eqref{eq:fib-ising-simple-as-bimodule} is used.  }
\label{tab:dim-ident}
\end{table}

Direct sum decompositions in $\mcC_\opA$ cannot be uniquely identified by the underlying bimodules, not even up to the ambiguity of $\Phi_{1}$ vs.\ $\Phi_2$. For example,
\begin{equation}
\Psi_1 \otimes_A \Psi_1 \cong 
\begin{pmatrix} \opid & \sigma \\ \sigma & 2\opid + \vareps \end{pmatrix}
~~.
\end{equation}
In this case,
the right hand side could be the underlying bimodule of $\D$ or of $A \oplus \Phi_{1/2}$. (However, since $\Psi_1$ is self-dual, its tensor square in $\mcC_\opA$ has to contain the tensor unit $A$ of $\mcC_\opA$ as a direct summand, and so the second decomposition is the correct one in $\mcC_\opA$.) We verified that -- taking $\Psi_1$ has the generator -- the iterated tensor products of $\Psi_1$ are compatible with those of $\mcC(sl(2),10)$.

The quantum dimensions of simple objects are also useful in showing the following result.

\begin{prp}\label{prop:all-different}
The 32 modular fusion categories $(\mcI_{h^3,\epsilon})_{\opA_{h,\epsilon}}$ for the 16 possible values of $h$ and $\epsilon \in \{\pm 1\}$ are pairwise non-equivalent as $\opC$-linear ribbon categories.
\end{prp}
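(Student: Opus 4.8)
The plan is to separate the $32$ categories using three ribbon invariants of increasing refinement: the global dimension $\Dim$, the anomaly (Gauss sum) $\xi$, and the quantum dimensions of the simple objects -- or, more economically, the single scalar $\sum_i\dim(i)$, the sum running over isomorphism classes of simple objects. Since all $32$ categories have the same rank $11$ and, by Remark~\ref{rem:E6}, are expected to share the fusion rules of $\mcC(sl(2),10)$, one cannot hope to distinguish them by rank or fusion rules alone. \emph{Step 1 (global dimension).} By Proposition~\ref{prop:C_A-properties}, $\Dim\big((\mcI_{h^3,\epsilon})_{\opA_{h,\epsilon}}\big)=24\,(h^2+h^{-2})^{-2}$. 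Writing $h=\exp(\pi i n/24)$ with $\gcd(n,48)=1$ gives $h^2+h^{-2}=2\cos(\pi n/12)$, and as $n$ runs over the residues coprime to $48$ the value $\cos^2(\pi n/12)$ is $\tfrac{2+\sqrt3}{4}$ when $n\equiv\pm1\pmod{12}$ and $\tfrac{2-\sqrt3}{4}$ when $n\equiv\pm5\pmod{12}$. Hence the $32$ categories fall into two families of $16$ (eight values of $h$, and both signs $\epsilon$, in each), and a ribbon equivalence can occur only inside one family.

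\emph{Step 2 (anomaly).} One computes, using the description of $\mcC_\opA$ in \cite{Mulevicius:2020bat}, that the anomaly of $(\mcI_{h^3,\epsilon})_{\opA_{h,\epsilon}}$ equals the anomaly of the underlying Ising category, so by \eqref{eq:Ising-Dim-anomaly} it is $\xi\big((\mcI_{h^3,\epsilon})_{\opA_{h,\epsilon}}\big)=\epsilon\,\zeta^{-1}=\epsilon\,h^{-3}$. Within one $\Dim$-family the cubes $h^3$ exhaust the eight primitive $16^\text{th}$ roots of unity, so for fixed $\epsilon$ the numbers $\epsilon h^{-3}$ are pairwise distinct; moreover the set of these eight values for $\epsilon=+1$ coincides with that for $\epsilon=-1$ (negation permutes the primitive $16^\text{th}$ roots). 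A short computation with roots of unity then shows that, inside a single $\Dim$-family, two pairs $(h,\epsilon)$ have the same anomaly precisely when they are $(h_0,+1)$ and $(-h_0,-1)$: the primitive $48^\text{th}$ roots whose cube is $-h_0^3$ are $-h_0$, which lies in the same family as $h_0$, and one of $h_0\,e^{\pm\pi i/3}$, which lies in the other family. Thus $(\Dim,\xi)$ reduces the problem to showing $(\mcI_{h^3,+1})_{\opA_{h,+1}}\not\cong(\mcI_{-h^3,-1})_{\opA_{-h,-1}}$ for the $16$ relevant $h$.

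\emph{Step 3 (quantum dimensions).} Here I would use Section~\ref{sec:adjoint}: by \eqref{eq:fib-ising-simple-as-bimodule} the underlying $A$-$A$-bimodules of the $11$ simple objects are known, and the dimension formula $\dim_{\mcC_\opA}(M)=\sum_b\tfrac{\psi_b^2}{\psi_a^2}\dim_\mcC({}_aM_b)$ expresses each $\dim_{\mcC_\opA}(M)$ in terms of $d_\sigma:=\dim_\mcC(\sigma)=\epsilon\lambda$, of $r:=\psi_\varphi^2/\psi_\varone^2$, and of $\dim_\mcC(\opid)=\dim_\mcC(\vareps)=1$. Replacing $(h,+1)$ by $(-h,-1)$ leaves $\lambda=\zeta^2+\zeta^{-2}=h^6+h^{-6}$ unchanged but -- using \eqref{eq:psi_sols} and the sign table in the proof of Theorem~\ref{thm:main-detail} -- flips the signs of both $d_\sigma$ and $r$. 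Consequently the six ``grade $0$'' quantum dimensions are unchanged, while the five ``grade $1$'' ones all change sign, so their sum passes from $2\lambda+6r$ to $-(2\lambda+6r)$. Hence $\sum_i\dim(i)$ changes by $4(\lambda+3r)$, which is nonzero because $|\lambda|=\sqrt2$ while $|3r|\in\{\tfrac{3}{2}(\sqrt6+\sqrt2),\,\tfrac{3}{2}(\sqrt6-\sqrt2)\}$. As $\sum_i\dim(i)$ is a ribbon invariant, the two categories are inequivalent, and this separates the last $16$ pairs and completes the argument.

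The hardest step is Step 3: it is the only one that genuinely needs the explicit decomposition of the simple objects of $\mcC_\opA$ into bimodules from Section~\ref{sec:adjoint}, and one must take care to package the quantum dimensions into an honest invariant (the scalar $\sum_i\dim(i)$, rather than the raw multiset) that distinguishes a category from its ``grade-$1$ sign flip''. A smaller point is justifying in Step 2 that the orbifold construction leaves the anomaly unchanged.
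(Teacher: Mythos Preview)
Your argument is correct and follows the same three-step strategy as the paper: use the global dimension, then the anomaly (which the paper cites \cite{Carqueville:2020} rather than \cite{Mulevicius:2020bat} for the invariance under orbifolding), and finally quantum-dimension information to separate the residual pairs $(h,+1)\sim(-h,-1)$. The only substantive difference is in Step~3: the paper singles out the invariant $\dim_{\mcC_\opA}(\Psi_1)=-\epsilon(h^{10}+h^{-10})$ after arguing that any equivalence must send $\Psi_1$ to $\Psi_1$ or $L$, whereas you use the manifestly label-independent scalar $\sum_i\dim(i)$; your packaging avoids the auxiliary ``$\Psi_1$ must map to $\Psi_1$ or $L$'' step, at the cost of the small extra computation showing $\lambda+3r\neq 0$.
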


\begin{proof}
A ribbon equivalence preserves the global dimension and the anomaly. 
It is shown in \cite{Carqueville:2020} that passing to the orbifold modular fusion category preserves the anomaly. Hence the anomaly $\xi$ of $(\mcI_{h^3,\epsilon})_{\opA_{h,\epsilon}}$ is equal to that of $\mcI_{h^3,\epsilon}$ as stated in \eqref{eq:Ising-Dim-anomaly}. 

A ribbon equivalence also preserves the quantum dimension of simple objects, and from this one can verify that any equivalence must map $\Psi_1$ for one choice of $(h,\epsilon)$ to either $\Psi_1$ or $L$ for any other choice $(h',\epsilon')$.

Abbreviating $\mcD = (\mcI_{h^3,\epsilon})_{\opA_{h,\epsilon}}$, altogether we see that the triple of numbers
\begin{equation}
    \big( \,
    \Dim(\mcD) \,,\,  
    \xi(\mcD) \,,\,  
    \dim_\mcD(\Psi_1) \,
    \big)
\end{equation}
is a ribbon invariant. Note that $\dim_\mcD(\Psi_1)=\dim_\mcD(L)$, so it does not matter whether we use $\Psi_1$ or $L$.
From \eqref{eq:Fib-orb-global}, \eqref{eq:Ising-Dim-anomaly} and \eqref{eq:Fib-sol-3} we read off the explicit values to be
\begin{equation}
    \big( \,
    24\, \big(h^2+h^{-2}\big)^{-2}  \,,\,  
    \epsilon h^{-3} \,,\,  
    -\epsilon (h^{10}+h^{-10}) \,
    \big) \ .
\end{equation}
It is straightforward to check that this distinguishes all 32 possibilities.
\end{proof}

\appendix

\section{Appendix}

\subsection{Labelling convention for polynomial equations}\label{app:convention}

The defining data and conditions for an orbifold datum arise from the study of generalised orbifolds of Reshetikhin-Turaev TQFTs \cite{CRS1,CRS3}. There, surface defects are labelled by algebras and line defects by appropriate multi-modules. In the pictures below, we give the defect version of the algebraic condition as we find these to be the most readable way to present them. The shading on some surfaces indicates that their orientation is opposite to that of the paper plane.

By assumption \eqref{eq:assumption2}, the algebra in question is a direct sum of copies of the tensor unit, indexed by elements of $B$. In the pictures below, we indicate for each surface the index in $B$ that we used when converting the corresponding algebraic equations (O1)--(O8) in \cite[Def.\,2.2]{Mulevicius:2020bat} into the polynomial equations in Table~\ref{tab:orb_eqs}. For (O1) this is explained in slightly more detail in Appendix~\ref{app:pentagon}.

\begin{align*}
& 
\raisebox{7em}{\text{O1\,:}}\hspace{-2em}
\pic[0.85]{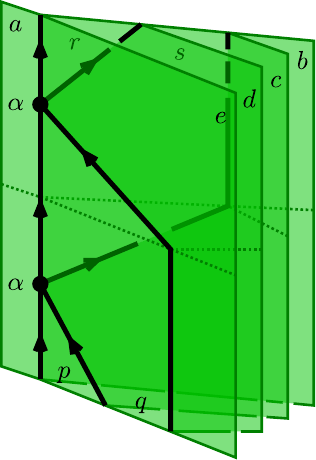} = \pic[0.85]{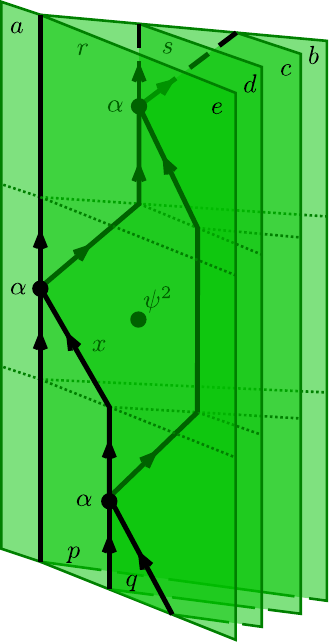}
\\
&
\raisebox{6.5em}{\text{O2\,:}}\hspace{-2em}
\pic[0.85]{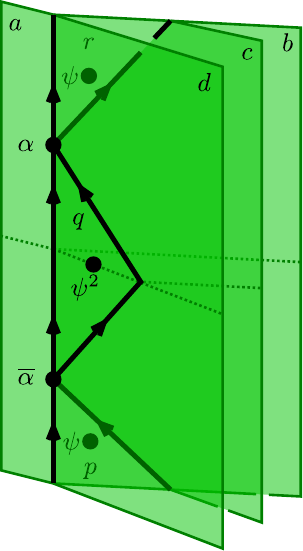} = \pic[0.85]{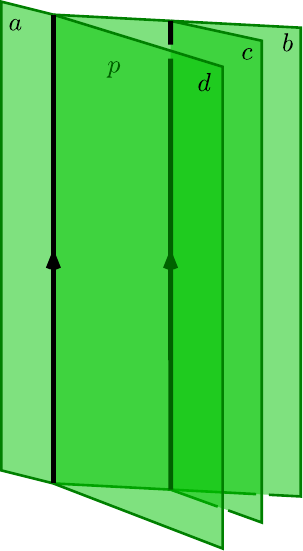}
&&	
\raisebox{6.5em}{\text{O3\,:}}\hspace{-2em}
\pic[0.85]{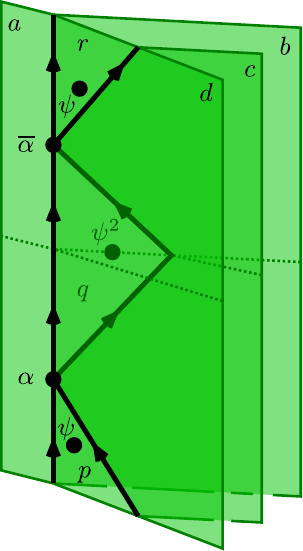} = \pic[0.85]{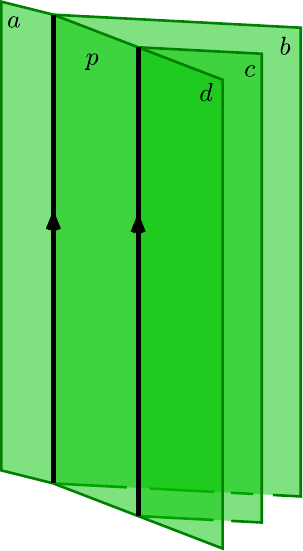}
\\
&	
\raisebox{6.5em}{\text{O4\,:}}\hspace{-2em}
\pic[0.85]{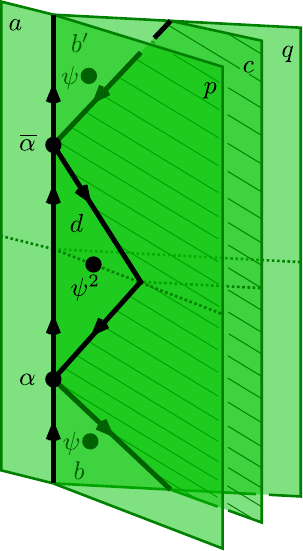} = \pic[0.85]{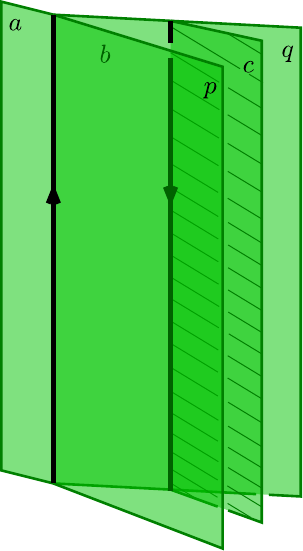}
&&	
\raisebox{6.5em}{\text{O5\,:}}\hspace{-2em}
\pic[0.85]{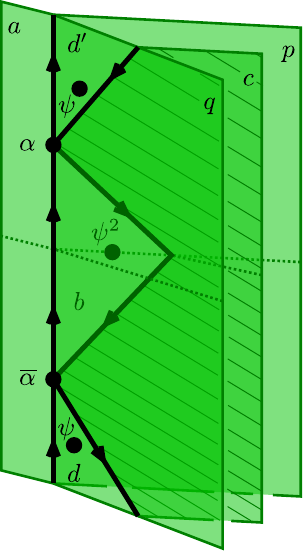} = \pic[0.85]{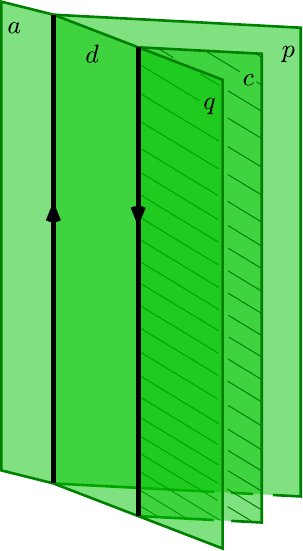}
\\
&	
\raisebox{6.5em}{\text{O6\,:}}\hspace{-2em}
	\pic[0.85]{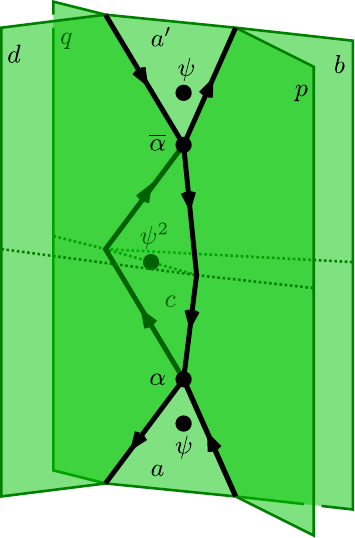} = \pic[0.85]{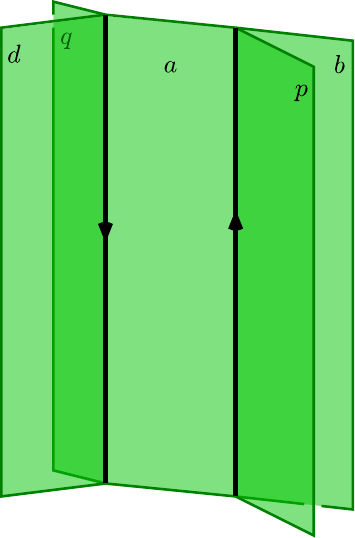}
&~~&	
\raisebox{6.5em}{\text{O7\,:}}\hspace{-2em}
	\pic[0.85]{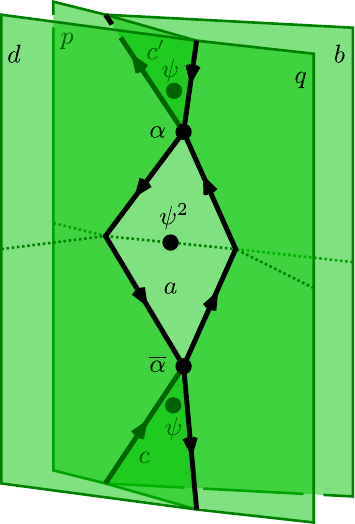} = \pic[0.85]{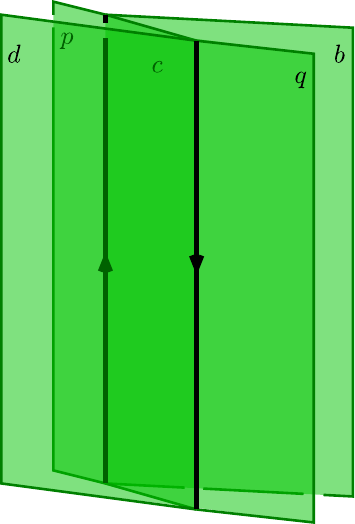}
\end{align*}
\begin{align*}
&	
\raisebox{4em}{\text{O8\,:}}\hspace{-2em}
	\pic[0.85]{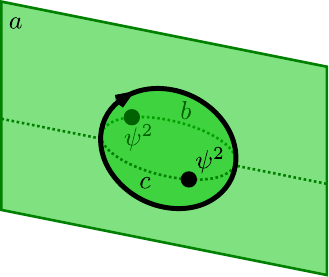} = \pic[0.85]{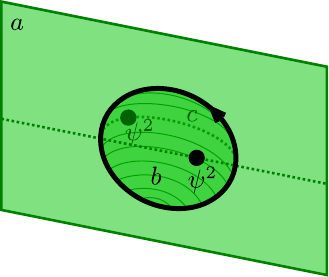} = \pic[0.85]{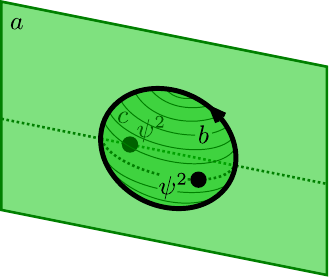} = 
	\phi^{-2} \cdot 
	\pic[0.85]{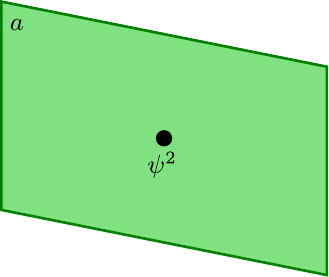}
\end{align*}

\subsection{Deriving the pentagon condition}
\label{app:pentagon}
The pentagon condition (identity (O1) in \cite[Def.\,2.2]{Mulevicius:2020bat}) for an orbifold datum $\opA = (A, T, \a, \abar, \psi, \phi)$ in a modular fusion category $\mcC$ is
\begin{equation}
\label{eq:O1}
\pic[2.0]{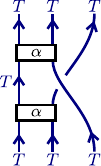} = \pic[2.0]{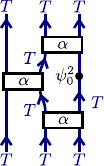} ~.
\end{equation}
This algebraic condition is obtained directly from the defect presentation O1 in the previous appendix.
Under assumptions \eqref{eq:assumption1}--\eqref{eq:assumption3} it is equivalent to the collection of identities
\begin{equation}
\label{eq:O1_assump}
\pic[2.0]{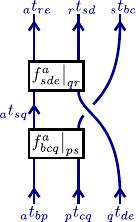} = \sum_{x\in B} \pic[2.0]{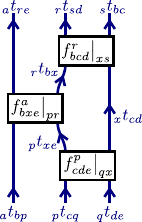},\quad
a,b,c,d,e,p,q,r,s\in B ~,
\end{equation}
where we used \eqref{eq:alpha_ito_f} and cancelled the appearances of $\psi$ on both sides.
Notice that the indexing matches the defect presentation O1 in the previous appendix. For example, on the left hand side of \eqref{eq:O1_assump}, the object ${}_at_{bp}$ at the bottom left corresponds in the defect presentation O1 to the bottom left line defect, which is connected to a surface defect labelled $a$ on the left, and to two surface defects labelled $b$ and $p$ on the right.

Let us decompose the source and target of the morphism in \eqref{eq:O1_assump} into simple objects by composing both sides with
\begin{equation}
\label{eq:O1_decompose}
\pic[2.0]{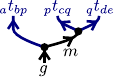}, \quad \pic[2.0]{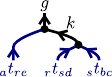}, \quad
g,k,m\in I ~.
\end{equation}
On the left hand side one gets:
\begin{align}
&\pic[2.0]{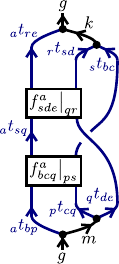} =
\sum_{i,j\in I} f^{a, ~i}_{sde, ~ qr} f^{a, ~j}_{bcq, ~ ps} \pic[2.0]{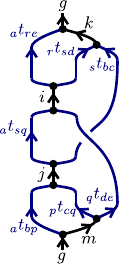}\\
& =\sum_{i,j\in I}
f^{a, ~i}_{sde, ~ qr} ~
f^{a, ~j}_{bcq, ~ ps} ~
F^{\left( {}_a t_{re} ~ {}_r t_{sd} ~ {}_s t_{bc} \right) g}_{k \, i} ~
G^{\left( {}_a t_{bp} ~ {}_p t_{cq} ~ {}_q t_{de} \right) g}_{j \, m}
\pic[2.0]{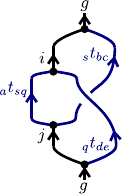} ~.
\label{eq:app-pentagon-lhs}
\end{align}
The remaining diagram can be quickly evaluated as follows:
\begin{align}
&\pic[2.0]{A1_O1_lhs_calc3.pdf} =
\pic[2.0]{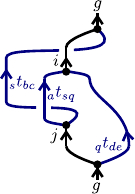} =
R^{\left( {}_a t_{sq} ~ {}_s t_{bc} \right) j} ~
R^{-\left( {}_s t_{bc} ~ i \right) g}
\pic[2.0]{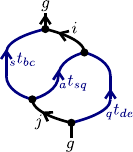}
\nonumber \\
&=
R^{\left( {}_a t_{sq} ~ {}_s t_{bc} \right) j} ~
R^{-\left( {}_s t_{bc} ~ i \right) g} ~
F^{\left( {}_s t_{bc} ~ {}_a t_{sq} ~ {}_q t_{de} \right) g}_{i \, j} \cdot \id_g ~.
\end{align}
Similarly we compute the right hand side of \eqref{eq:O1_assump} upon composing with the morphisms in \eqref{eq:O1_decompose}:
\begin{align}
&\sum_{x\in B}\pic[2.0]{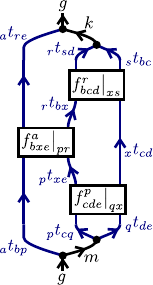} =
\sum_{x\in B, ~l\in I}
f^{p, ~m}_{cde, ~qx} ~
f^{a, ~l}_{bxe, ~pr} ~
f^{r, ~k}_{bcd, ~xs}
\pic[2.0]{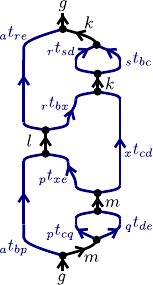}
\nonumber \\
&=\sum_{x\in B, ~l\in I}
f^{p, ~m}_{cde, ~qx} ~
f^{a, ~l}_{bxe, ~pr} ~
f^{r, ~k}_{bcd, ~xs} ~
N_{{}_p t_{cq} ~ {}_q t_{de}}^m ~
N_{{}_r t_{sd} ~ {}_s t_{bc}}^k
\pic[2.0]{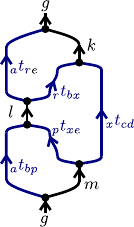}
\nonumber \\
&=\sum_{x\in B, ~l\in I}
f^{p, ~m}_{cde, ~qx} ~
f^{a, ~l}_{bxe, ~pr} ~
f^{r, ~k}_{bcd, ~xs} ~
F^{\left( {}_a t_{re} ~ {}_r t_{bx} ~ {}_x t_{cd} \right) g}_{k \, l} ~
G^{\left( {}_a t_{bp} ~ {}_p t_{xe} ~ {}_x t_{cd} \right) g}_{l \, m} 
\cdot \id_g ~.
\label{eq:app-pentagon-rhs}
\end{align}
Comparing coefficients in \eqref{eq:app-pentagon-lhs} and \eqref{eq:app-pentagon-rhs} gives condition~\eqref{O1} in Table~\ref{tab:orb_eqs}.

\subsection{Evaluating the $T^3$-invariant}
\label{app:T3_invariant}
Let $\mcC$ be a modular fusion category and $\opA = (A,T,\a,\abar,\psi,\phi)$ a simple orbifold datum in $\mcC$, satisfying assumptions \eqref{eq:assumption1}--\eqref{eq:assumption3}.
To compute the invariant $Z^\mathrm{orb,\opA}_\mcC(T^3)$ of the $3$-torus $T^3$ one must first pick an \textit{$\opA$-decorated skeleton} of it (see \cite{Carqueville:2020} for details), i.e.\ a stratification $S$ of $T^3$, such that all $3$-strata are homeomorphic to $\opR^3$, $2$-strata are labelled by $A$, $1$-strata are labelled by $T$ and $0$-strata are labelled by $\a$, $\abar$ depending on the orientation. 
Our choice is going to be the following:
\begin{equation}
\pic[1.75]{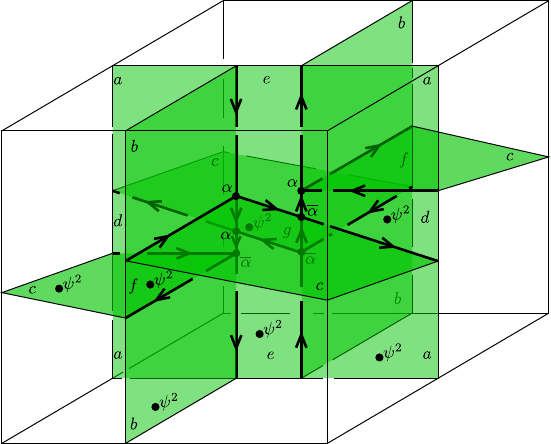} ~.
\end{equation}
Here $T^3$ is depicted as a cube with the opposite sides identified, all of the $2$-strata have the paper plane orientation.  
The label on each 2-stratum matches the index of the summand $\opid$ in the direct sum decomposition of the corresponding copy of $A$ as used below.

\medskip

Next one converts the stratification to a $\mcC$-coloured ribbon graph embedded in $T^3$ and evaluates it with the Reshetikhin-Turaev TQFT $Z_\mcC$ as described in \cite{CRS2,CRS3}.
Using the expressions \eqref{eq:alpha_abcd}, \eqref{eq:alpha_ito_f}, \eqref{eq:abar_ito_g}, \eqref{eq:fg_scalars} for $\a$ and $\abar$ one gets:
\begin{align} \nonumber
Z^\mathrm{orb,\opA}_\mcC & (T^3) =\\ \nonumber
&\phi^2 \cdot \sum_{\substack{a,b,c,d,e,f,g\in B \\ r,s,t,u,v,w\in I}}
\frac{\psi_a^2 \psi_c^2 \psi_e^2 \psi_f^2}{\psi_b^2 \psi_d^2 \psi_g^2} ~
f^{e, ~r}_{afc, ~bg} ~ f^{e, ~s}_{caf, ~gd} ~ f^{e, ~u}_{fca, ~db} ~
g^{e, ~t}_{acf, ~db} ~ g^{e, ~v}_{fac, ~gd} ~ g^{e, ~w}_{cfa, ~bg}\\ \label{eq:T3_calc1}
& ~\cdot Z_\mcC ( T^3_{\operatorname{def}} ) ~.
\end{align}
where
\begin{equation}
\label{eq:T3def}
T^3_{\operatorname{def}} := \pic[1.75]{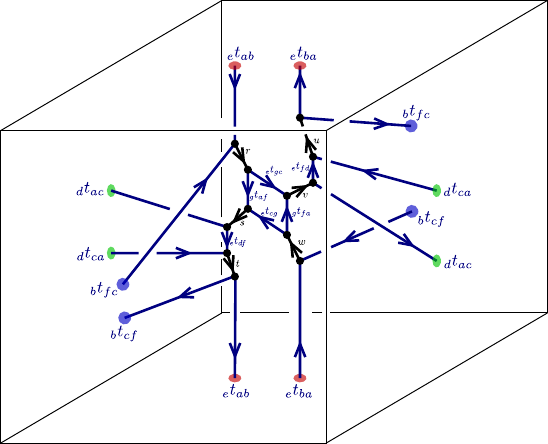} ~.
\end{equation}

\medskip

To evaluate the invariant of the torus \eqref{eq:T3def}, let us introduce the scalars $L^{bcf, ~wt}_{eda, ~ur}\big|_x$, where $a,b,c,d,e \in B$ and $x,u,r,w,t\in I$, 
such that
\begin{equation}
\label{eq:L_definition}
\pic[2.0]{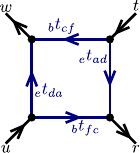} =
\sum_{x\in I} L^{bcf, ~wt}_{eda, ~ur}\big|_x \pic[2.0]{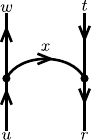} ~.
\end{equation}
We claim that the following equality holds:
\begin{equation}
\label{eq:L_formula}
L^{bcf, ~wt}_{eda, ~ur}\big|_x =
N^{w}_{{}_e t_{da} ~ {}_b t_{cf}} ~
N^{t}_{{}_e t_{ad} ~ {}_b t_{cf}} ~
\frac{\dim x}{\dim {}_b t_{fc}} ~
F^{({}_e t_{ad} ~ {}_b t_{cf} ~ x)r}_{{}_b t_{fc} ~ t} ~
G^{({}_e t_{da} ~ {}_b t_{cf} ~ x)u}_{w ~ {}_b t_{fc}} ~.
\end{equation}
Indeed, use the identity
\begin{equation}
\pic[2.0]{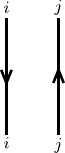} =
\sum_{k\in I} \frac{\dim k}{\dim j} \pic[2.0]{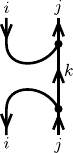}
\end{equation}
to rewrite the left hand side of \eqref{eq:L_definition} as
\begin{align}
\nonumber
& \pic[2.0]{A2_L_loop_lhs.pdf} = \sum_{x\in I} \frac{\dim x}{\dim {}_b t_{fc}}
\pic[2.0]{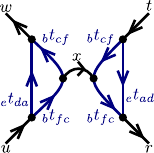}\\
&= \sum_{x\in I} \frac{\dim x}{\dim {}_b t_{fc}}
G^{({}_e t_{da} ~ {}_b t_{fc} ~ x)u}_{w ~ {}_b t_{fc}} ~
F^{({}_e t_{ad} ~ {}_b t_{cf} ~ x)r}_{{}_b t_{fc} ~ t}
\pic[2.0]{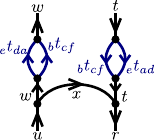} ~.
\end{align}

\medskip

\begin{figure}
\centering
\begin{subfigure}[b]{0.49\textwidth}
	\centering
	\pic[1.3]{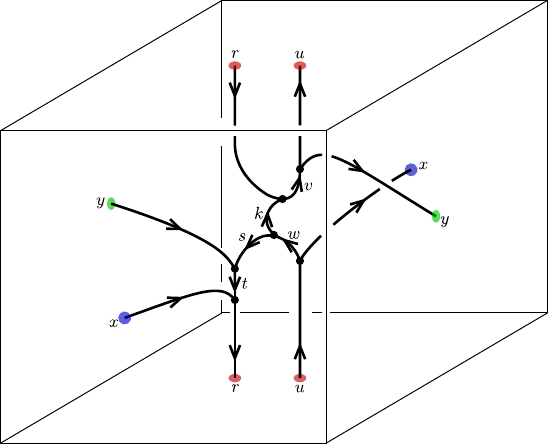}
	\caption{}
	\label{fig:T3_graph2}
\end{subfigure}
\begin{subfigure}[b]{0.49\textwidth}
	\centering
	\pic[1.3]{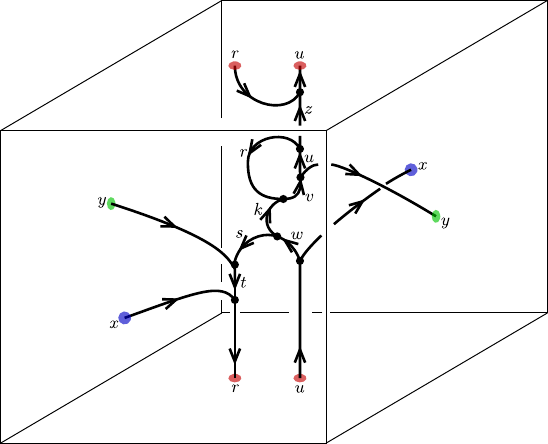}
	\caption{}
	\label{fig:T3_graph3}
\end{subfigure}\\
\begin{subfigure}[b]{0.49\textwidth}
	\centering
	\pic[1.3]{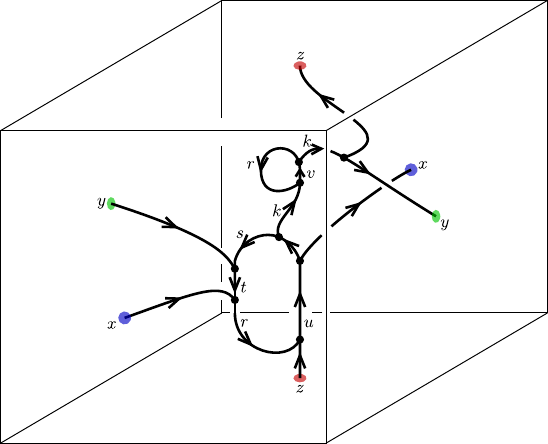}
	\caption{}
	\label{fig:T3_graph4}
\end{subfigure}
\begin{subfigure}[b]{0.49\textwidth}
	\centering
	\pic[1.3]{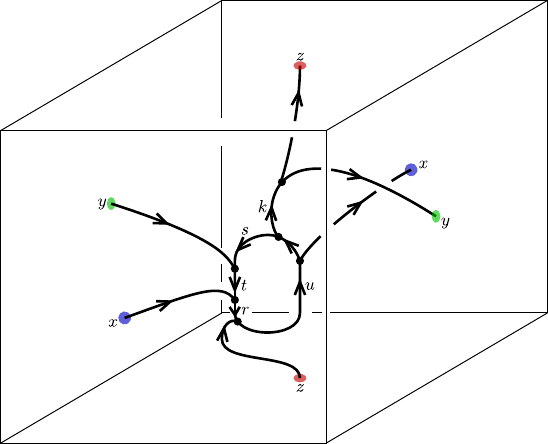}
	\caption{}
	\label{fig:T3_graph5}
\end{subfigure}\\ \pagebreak
\begin{subfigure}[b]{0.49\textwidth}
	\centering
	\pic[1.3]{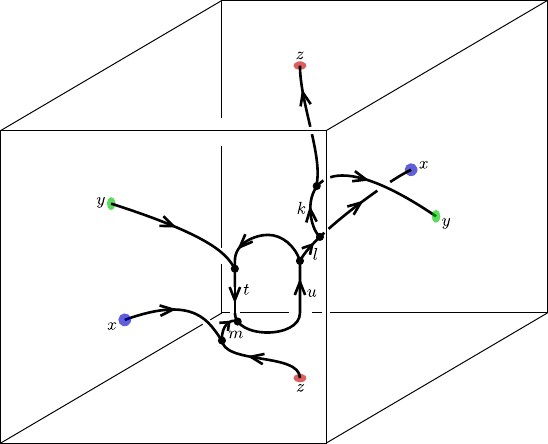}
	\caption{}
	\label{fig:T3_graph6}
\end{subfigure}
\begin{subfigure}[b]{0.49\textwidth}
	\centering
	\pic[1.3]{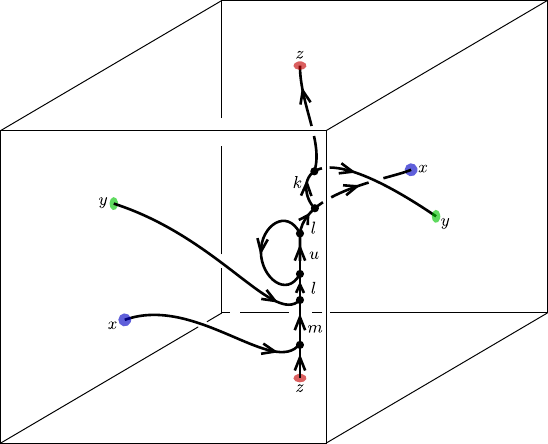}
	\caption{}
	\label{fig:T3_graph7}
\end{subfigure}
\caption{Three-torus with a series of embedded ribbon graphs as used in the calculation in \eqref{eq:T3calc2}.}
\label{fig:T3_graphs}
\end{figure}

We now focus on evaluating the invariant of the torus $T^3_{\operatorname{def}}$ as in \eqref{eq:T3def}.
Let us denote by $T^3_{\text{(a)}}$, $T^3_{\text{(b)}}$, \dots, $T^3_{\text{(f)}}$ the $3$-tori with embedded ribbon graphs as depicted in Figure~\ref{fig:T3_graphs}.
One has:
\begin{equation}
\label{eq:T3calc2}
Z_\mcC(T^3_{\operatorname{def}}) =
\sum_{x,y,k\in I}
L^{bcf, ~wt}_{eba, ~ur} \big|_x ~
L^{dca, ~us}_{efd, ~vt} \big|_y ~
L^{gaf, ~sr}_{ecg, ~wv} \big|_k \cdot
Z_\mcC(T^3_{\text{(a)}}) ~,
\end{equation}
where
\begin{align}
\nonumber
& Z_\mcC(T^3_{\text{(a)}}) =
\sum_{z\in I}
\frac{\dim z}{ \dim u} \cdot
Z_\mcC(T^3_{\text{(b)}}) ~,
&& Z_\mcC(T^3_{\text{(b)}}) =
F^{(r \, z \, y)\,v}_{k \, u} 
\cdot 
Z_\mcC(T^3_{\text{(c)}}) ~, \\
\nonumber
& Z_\mcC(T^3_{\text{(c)}}) =
\frac{\dim v}{\dim k} \cdot
Z_\mcC(T^3_{\text{(d)}}) ~,
&& Z_\mcC(T^3_{\text{(d)}}) =
\sum_{l,m\in I}
F^{(s \, k \, x)u}_{l \, w} ~
G^{(t \, x \, z)u}_{r \, m} \cdot
Z_\mcC(T^3_{\text{(e)}}) ~, \\
\nonumber
& Z_\mcC(T^3_{\text{(e)}}) =
G^{(s \, y \, m)u}_{t \, l} \cdot
Z_\mcC(T^3_{\text{(f)}}) ~,
&& Z_\mcC(T^3_{\text{(f)}}) =
\frac{\dim u}{\dim l} \cdot T_{xyz, ~ klm} ~,
\end{align}
where in the last equation we use the notation
\begin{equation}
\label{eq:Txyzklm_def}
T_{xyz, ~ klm} :=
Z_\mcC \left(
\pic[1.5]{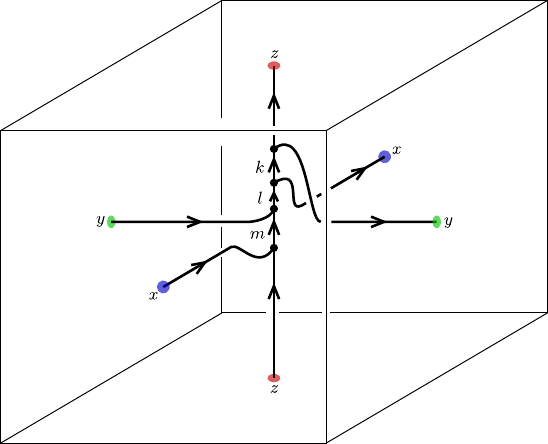}
\right) ~.
\end{equation}
Combining all of the equations in \eqref{eq:T3calc2} one already obtains \eqref{eq:orb-simple}.
It remains to derive the expression for $T_{xyz, ~klm}$.

\medskip

The invariant of $T^3 = S^1 \times S^1 \times S^1$ can also be computed as the trace of the operator invariant assigned to the cylinder $C = S^1 \times S^1 \times [0,1]$.
The same is true if $T^3$ has an embedded ribbon graph, in which case $C$ can have punctures on its boundary.
For the cylinder corresponding to the $3$-torus in the argument of $Z_\mcC$ in \eqref{eq:Txyzklm_def} we will use the graphical representation
\begin{equation}
\label{eq:cylinder_picture}
\pic[2.0]{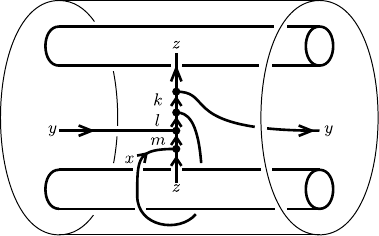}
\end{equation}
where the outer tube represents the manifold $S^2 \times S^1$ (the ends of the tube on the left and on the right are assumed to be identified and the boundary circle of a vertical slice corresponds to the point at infinity of $S^2$). The two small inner tubes represent the boundary components (seen here as obtained by removing two solid tori from $S^2 \times S^1$).
The tube at the bottom corresponds to the incoming boundary component, while the one at the top to the outgoing one.

\medskip

The vector space assigned to a $2$-torus with a single $z\in I$ labelled puncture is isomorphic to $\bigoplus_{q\in I} \mcC(q, q \otimes z)$ and its dual to $\bigoplus_{p\in I} \mcC(p \otimes z, p)$.
The image of a basis element $\l^{(qz)q}$ and evaluation with the dual basis element $\l_{(pz)p}$ are obtained by gluing the solid tori
\begin{equation*}
\pic[2.0]{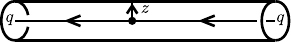}
\quad\text{and}\quad
\pic[2.0]{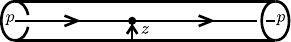}
\end{equation*}
to the incoming and outgoing boundary respectively.
One then has:
\begin{equation}
\label{eq:T3_calc3}
T_{xyz, ~klm} = \sum_{p\in I}
Z_\mcC\left( \pic[2.0]{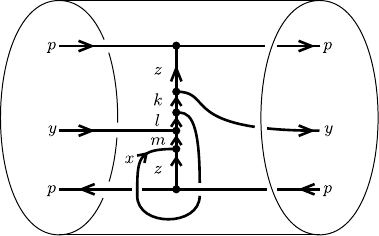} \right) ~.
\end{equation}
Now, the invariant of $S^2 \times S^1$ is equal to the trace of the operator invariant of the cylinder $S^2 \times [0,1]$.
The vector space assigned to a $2$-sphere with three punctures labelled by $p$, $y$, $p^*$ as in \eqref{eq:T3_calc3} is $\mcC(\opid,p\otimes y\otimes p^*)$ with the dual $\mcC(p\otimes y \otimes p^*, \opid)$.
Using the basis
\begin{equation}
\pic[2.0]{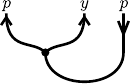} \quad\text{with dual}\quad
\frac{1}{\dim p}
\pic[2.0]{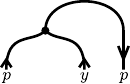}
\end{equation}
one has:
\begin{equation}
\label{eq:T3_calc4}
T_{xyz, ~klm} =
\sum_{p\in I} \frac{N_{py}^p}{\dim p} 
\underbrace{\pic[2.0]{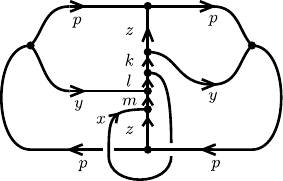}}_{=:\Gamma}
~.
\end{equation}
The scalar represented by the string diagram $\Gamma$ can be expressed as follows:
\begin{align*}
\Gamma &= 
G^{(p \, z \, y)p}_{p \, k} \cdot 	\pic[1.65]{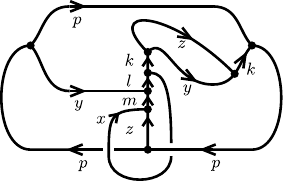}
\\
&= 
G^{(p \, z \, y)p}_{p \, k} N_{z y}^k \cdot 	\pic[1.65]{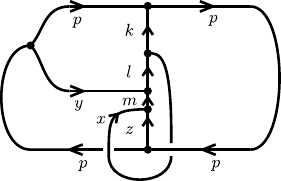}
\\
& \overset{(*)}=
G^{(p \, z \, y)p}_{p \, k}  \sum_{j\in I} G^{(p \, k \, x)j}_{p \, l} \cdot 	\pic[1.65]{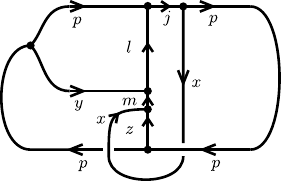}
\\
&  =
G^{(p \, z \, y)p}_{p \, k} \sum_{j\in I} G^{(p \, k \, x)j}_{p \, l}   F^{(p \, y \, m)j}_{l \, p} \cdot 	\pic[1.65]{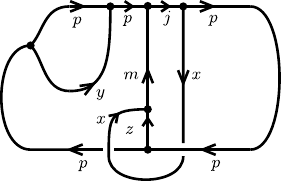}
\\
&= 
G^{(p \, z \, y)p}_{p \, k} \sum_{j\in I} G^{(p \, k \, x)j}_{p \, l}   F^{(p \, y \, m)j}_{l \, p} N_{p y}^p \cdot 
	\pic[1.65]{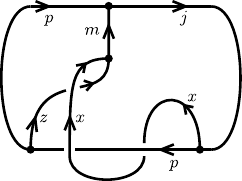}
\\
&\overset{(**)}= 
G^{(p \, z \, y)p}_{p \, k}  \sum_{j\in I} G^{(p \, k \, x)j}_{p \, l}   F^{(p \, y \, m)j}_{l \, p} R^{-(z \, x)m} \frac{\theta_j}{\theta_x \theta_p} F^{(p \, z \, x)j}_{m \, p} \cdot \dim j ~.
\end{align*}
In step $(*)$ we omitted $N_{zy}^k$ as it is implicit in $G^{(pzy)p}_{pk}$. In step $(**)$ we omitted $N_{p y}^p$ for the same reason, and we used the identities
\begin{equation}
\pic[2.0]{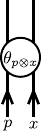} = \pic[2.0]{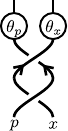}, \quad
\pic[1.75]{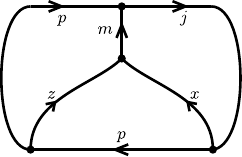} = F^{(p \, z \, x)j}_{m \, p} \cdot \dim j ~.
\end{equation}
Substituting the above expression for $\Gamma$ into \eqref{eq:T3_calc4} yields exactly the expression for $T_{xyz, ~klm}$ in Lemma \ref{lem:T3_invariant}.

\newcommand{\arxiv}[2]{\href{http://arXiv.org/abs/#1}{#2}}
\newcommand{\doi}[2]{\href{http://doi.org/#1}{#2}}

\end{document}